\newcommand{\bfS}{\mathbf{S}}
\newcommand{\nt}{\lfloor nt \rfloor}
\newcommand{\ns}{\lfloor ns \rfloor}
\newcommand{\R}{\mathbb{R}}
\newcommand{\lb}{\left(}
\newcommand{\rb}{\right)}
\newcommand{\Var}{\operatorname{Var}}
\newcommand{\cov}{\operatorname{cov}}
\newcommand{\E}{\mathbb{E}}
\newcommand{\N}{\mathbb{N}}
\newcommand{\bfR}{\mathbf{R}}
\newcommand{\bfSigma}{\mathbf{\Sigma}}
\newcommand{\bfX}{\mathbf{X}}
\newcommand{\bfI}{\mathbf{I}}
\newcommand{\bfK}{\mathbf{K}}
\newcommand{\MP}{Mar\v cenko--Pastur }
\newcommand{\cond}{\stackrel{\mathcal{D}}{\to}}
\newcommand{\conp}{\stackrel{\mathbb{P}}{\to}}
\newcommand{\tr}{\operatorname{tr}}
\newcommand{\D}{\mathbf{D}}
\newcommand{\T}{\mathbf{I}} 
\newcommand{\inv}{^{-1}}
\newcommand{\sq}{^{\frac{1}{2}}}
\newcommand{\PR}{\mathbb{P}}
\newcommand{\bfx}{\mathbf{x}}
\newcommand{\bfy}{\mathbf{y}}
\newcommand{\op}{o_{\PR}(1)}
\newcommand{\bfA}{\mathbf{A}}
\newcommand{\bfB}{\mathbf{B}}
\newcommand{\bfC}{\mathbf{C}}
\newcommand{\su}{\underline{m}}
\newtheorem{theorem}{Theorem}[section]
\newtheorem{lemma}{Lemma}[section]
\newtheorem{remark}{Remark}[section]
\numberwithin{equation}{section}
\begin{document}
\title{Detecting Spectral Breaks in Spiked Covariance Models}
\date{\today}
\author{Nina Dörnemann\thanks{University of California, Davis. E-Mail: ndoernemann@ucdavis.edu} \and Debashis Paul\thanks{University of California, Davis and Indian Statistical Institute, Kolkata. E-Mail: debpaul@ucdavis.edu}}
\maketitle

\begin{abstract}
In this paper, the key objects of interest are the sequential covariance matrices $\mathbf{S}_{n,t}$ and their largest eigenvalues. Here, the matrix $\bfS_{n,t}$ is computed as the empirical covariance associated with observations $\{\bfx_1,\ldots,\bfx_{ \lfloor nt \rfloor } \}$,  
for $t\in [0,1]$. The observations $\bfx_1,\ldots,\bfx_n$ are assumed to be i.i.d. $p$-dimensional vectors with zero mean,
and a covariance matrix that is a fixed-rank perturbation of the identity matrix.
Treating $\{
\mathbf{S}_{n,t}\}_{t \in [0,1]}$ as a matrix-valued stochastic process indexed by $t$, we study 
the behavior of the largest eigenvalues of $\mathbf{S}_{n,t}$, as 
$t$ varies, with $n$ and $p$ increasing simultaneously, so 
that $p/n \to y \in (0,1)$. As a key contribution of this work, we establish the weak convergence of the stochastic process corresponding to the sample spiked eigenvalues, if their population counterparts exceed the critical phase-transition threshold. Our analysis of the limiting process is fully comprehensive revealing, in general, non-Gaussian limiting processes.

As an application, we consider a class of change-point problems, where the interest is in detecting structural breaks in the covariance caused by a change in magnitude of the spiked eigenvalues. For this purpose, we propose two different maximal statistics corresponding to centered spiked eigenvalues
of the sequential covariances. We show the existence of limiting null distributions for these statistics, and prove consistency 
of the test under fixed alternatives. Moreover, we compare the behavior of the proposed tests through a simulation study.
\end{abstract}

MSC2020 subject classifications: Primary 15A18, 60F17; secondary 62H15.

Keywords and phrases: High-dimensional statistics, hypothesis testing, change-point problems, spiked covari- ance model.

\section{Introduction}

Principal component analysis (PCA) has found widespread applications in social, behavioral and biological sciences and in different branches of engineering \citep{feng2000human, fabozzi2007robust, ruppert2011statistics,lorenz1956empirical} due to its appeal in representing the 
structural aspects of the signal in a suitably low-dimensional linear space. Research on high-dimensional PCA over the last three decades (cf. 
\cite{johnstonePaul2018}) has brought to focus the various statistical phenomena associated with large dimensionality, such as non-trivial eigenvalue bias, 
including a phase transition phenomenon, as well as 
lack of consistency of the corresponding sample eigenvectors, as estimators of their population counterparts. 
For theoretical analyses
in the context of high-dimensional PCA, the so-called ``spiked covariance'' model is widely used,
whereby
a population covariance is represented as a sum of a 
low dimensional ``signal covariance'' and an isotropic 
noise covariance. In other words, the population covariance is a
low-rank perturbation of a positive multiple of the identity matrix. Following this description, the ``signal covariance'' is supposed to represent the components 
of variability that are driven by some underlying correlated
structure, whereas all the coordinates of the data 
matrix are contaminated by additive, isotropic noise.
From this perspective, the problem of reducing the 
dimensionality of the data boils down to identifying
the principal directions associated with the ``spiked eigenvalues'', i.e., the eigenvalues whose magnitudes 
are bigger than the noise variance, and then projecting the data to this ``signal-bearing'' subspace.

\subsection{Related literature on RMT and change-point problems}

Over the last two decades, there have been considerable developments in understanding the properties 
of PCA for high-dimensional data. 
A significant part of these studies adopted the so-called random
matrix theory (RMT) regime of asymptotics (\cite{bai2004, paulaue2014,yao2015, tao2023topics}), whereby the dimension
of the data is assumed to grow proportionally to the sample
size. There are statistical implications of this asymptotic 
paradigm that are fundamentally different from the classical 
paradigm of fixed dimensional large sample asympotics. 
One of the most noteworthy findings in this context is the
phase transition phenomena for the leading sample eigenvalues and eigenvectors \cite{bbp, bao2022statistical,  baik2006eigenvalues}.
In particular, the empirical eigenvalues exhibit distinct fluctuations depending on the relative size of their population counterparts to the bulk of the spectrum: in the subcritical regime, Tracy-Widom fluctuations of order $n^{-2/3}$ are observed, while in the supercritical regime, Gaussian fluctuations of order $n^{-1/2}$ are prevalent (see, e.g., \cite{elkaroui2007, onatski_tw, bao2015, leeschnelli2016, ding2018necessary, knowles2017anisotropic, schnelli2023convergence} on the subcritical case and \citep{paul2007asymptotics, zhang2022asymptotic, bai2008central, BAI2012167, cai_han_pan, jiang_bai, jiang_bai_2} for the supercritical case, among many others).
Especially, under the ``spiked covariance'' framework introduced in \cite{johnstone2001distribution}, 
there have been extensive analyses on the behavior of the 
leading eigenvalues and eigenvectors of the sample covariance 
matrix under the spiked covariance model within the RMT framework (see e.g.
\cite{johnstonePaul2018} and  the references therein).

As the understanding of high-dimensional PCA continues to evolve, it is natural to utilize the associated tools and ideas to the exploration of change-point problems. 
Problems of detecting structural breaks in high-dimensional data are increasingly ubiquitous in statistics due to increasing availability of complex data types. Here, a change-point problem consists of finding stationary segments in a piece-wise stationary time series. There is an extensive literature on change-point problems involving fixed dimensional observations recorded in time. Much of the existing literature has focused on structural changes through shifts in the mean (see \cite{jirak2015,dettegoesmann2020,chengwangwu,wangvolgushev,zhangetal2022} among many others). However, in recent times, there is increasing focus on change-point problems where the changes occur in higher order moments, especially the covariance structure. We refer to \cite{ ChenGupta2004, galeanopena2007, aueEtAl2009, dettewied2016} for some early references on the low-dimensional case. At the same time, there has been an explosion in literature on high-dimensional inference, including in the context of change point detection. Nevertheless, the problem of detection of structural breaks in the covariance of high-dimensional observations has received relatively limited attention so far. 

For fixed dimension, \cite{aueEtAl2009}, and \cite{aueHorvath2013} developed a nonparametric test based on the CUMSUM approach by means of aggregating the coordinates of the centered sequential covariance matrices.  In the work \cite{kao}, the authors considered the case of increasing $p$ and proposed a change point analysis scheme that relies on PCA. \cite{wangYuRinaldo2021} proposed a scheme for multiple change point detection for covariances when $p=O(n/\log n)$, by utilizing a binary segmentation approach applied to the operator norm of a CUMSUM statistics for the covariances. \cite{dette2022estimating} considered the problem of detecting change points in the covariance matrices of a sequence of observations, when the dimension is substantially larger than the sample size.  Their approach consists of first finding an appropriate estimate of a suitable discrepancy measure between individual coordinates of the covariance matrices before and after a hypothetical change point, followed by a constructing a weighted sum of the aforementioned statistic. Cutoff values for this test statistic are determined by a bootstrap procedure. 
Another change-point test in the RMT-framwork was proposed by \cite{ryankillick2023} comparing sequentially multivariate ratios of two covariance matrices corresponding to data collected before and after a potential change point. The methodology relies on a point-wise central limit theorem for F-Matrices in RMT given in \cite{zheng2012} combined with a Bonferroni correction. Consequently, their test may exhibit a conservative performance in applications, as pointed out in Section 4 of \cite{ryankillick2023}. Such drawbacks can be overcome by studying the sequential sample covariance model coupled with the theory of stochastic processes. Initial progress in this endeavor has been made in the work \cite{dornemann2021linear}. In the aforementioned paper, the authors studied the behavior of the renormalized linear spectral statistics corresponding to the sequential covariances and proved the weak convergence of this stochastic process to a Gaussian process. Based on the fluctuations of the limiting process, a change-point test of the sphericity assumption of covariance matrices was developed.

However, none of the works mentioned here specifically addresses the problem of detecting changes in the covariance when the change is only through changes in the leading eigenvalues of the covariance. Indeed, monitoring changes in a large-dimensional covariance structure induced by a low-rank component is motivated by relevant statistical applications, such as array processing (see \cite{beisson2024new} and references therein). 
Nevertheless, in a closely related work, \cite{horvathRice2019} studied the behavior of the process of fluctuations of the eigenvalues of centered sequential sample covariance matrices based on a vector valued process having a factor-model structure with stationary time-dependence. However, their framework assumes that the sample size grows much faster than the dimension, in particular, $p = o(\sqrt{n})$. Under this framework, and assuming that the leading eigenvalues of the population covariance are well-separated, they derive a Gaussian process limit for the fluctuations of the eigenvalues largest eigenvalue, around its population counterpart, of the sequential covariance process. They apply the results to devise a test for detection of a change point in the factor loadings, 
assuming that the change occurs simultaneously across all 
coordinates. Notice that, since dimension is much smaller than the sample size, the eigenvalues of sequential covariances do not entail a bias, which considerably simplifies the description of the limiting form of the process of eigenvalue fluctuations.

\subsection{An overview of our results}

In the present paper, we 
focus on a type of change-point problem where the 
structural change in the covariance is only through 
a change in the relative magnitude of the signal (or 
spiked) eigenvalues, but not in the directionality 
of the signal. 
For this purpose, we work under the
setup of a spiked covariance model, where $n,p \to \infty$ such that the
ratio $p/n$ converges to a nonzero constant, and the number of spiked
eigenvalues is fixed. The main contributions of this work can be summarized as follows.

\begin{enumerate}
    \item \textit{Analysis of the process of sample spiked eigenvalues}
 \\  
In the assumed RMT framework, we first consider
the limits of the leading sample eigenvalues for the sequential
covariance matrices, corresponding to any fixed fraction of
the data, assuming that the population spikes are above the 
corresponding phase-transition threshold. This implies that the 
sample spiked eigenvalues fluctuate around these asymptotic 
limits that are upwardly biased compared to the corresponding 
population spiked eigenvalues. Then we characterize these 
fluctuations, by establishing a limit of the corresponding 
centered process, as the fraction of data varies over the interval 
$[t_0,1]$ for a suitable $t_0>0$. In fact, we prove the weak convergence in an infinite-dimensional function space by the means of investigating the finite-dimensional distributions and establishing asymptotic tightness. Notably, our result incorporates the joint weak convergence of all spiked processes, and thus specifies not only the marginal but the joint limiting process of the signal eigenvalues. Also, our analysis of the limiting  process is fully comprehensive including an explicit representation of the covariance structure.

\item \textit{Change-point problem in high-dimensional row-rank models}
\\
After establishing the convergence of the 
centered and scaled stochastic process of the spiked eigenvalues in the sequential covariance model, we turn to
a change-point problem involving the structure of the underlying
population covariance matrices. Specifically, we aim to detect a potential change in the covariances within the observed time domain, while the structural break occurs only in terms of the magnitude of the spiked eigenvalues of the covariance. Following the commonly used interpretation in factor models, we may treat the principal components associated with the spiked eigenvalues as the signal-bearing coordinates. Then, the particular hypothesis emphasizes that the change may 
only affect the strengths of ``signal'' in the data, but 
not their direction. Under such 
a setting, the changes may be captured by the deviation of 
the sample spiked eigenvalues associated with the sequential 
covariance matrices from the ``null case'' of no change point. More precisely, we test the null hypothesis of all covariances matrices matching a prescribed matrix against the alternative of deviations from this given structure. For this purpose, we propose two different maximal statistics involving the centered spiked eigenvalues of sequential sample covariances. We show that the proposed tests have asymptotically controlled significance levels, and provide consistency results under fixed alternatives.

\item \textit{Accounting for the case of unknown signal strength}
\\
In many applications, the spiked eigenvalues of the population covariance matrices under the null case act as nuisance parameters, and the user is interested in detecting any structural breaks in this low-rank components no matter what their actual values are. Such questions pose the challenge of  estimating the spikes from observed data. 
As a step towards resolving this nontrivial problem, we assume that we have a preliminary, independent data set with the same covariance structure as in the null case. We use this initial data to estimate the 
spiked eigenvalues and then use a plug-in version of the 
maximal statistics that capture the fluctuations of the 
renormalized statistics from the baseline. The plug-in approach
leads to a modification in the limiting process which we quantify, and use the latter process to formulate our test 
for the existence of a change-point. We demonstrate the performance of the proposed test through numerical simulations.
\end{enumerate}

\subsection*{Organization} 

The rest of the paper is organized as follows.
In Section \ref{sec_spike_model}, we introduce the spiked covariance model and define the sequential sample covariance process.
In Section \ref{sec_main_results_eigenval}, we state the 
main results regarding the first order limits and weak convergence of spiked
eigenvalues of the sequential sample covariance process, after appropriate renormalization.
In Section \ref{sec_test}, we introduce the change-point
hypotheses and develop tests based on spiked eigenvalues 
of the sequential sample covariances. Moreover, we investigate the finite-sample performance of the proposed tests. 
Proofs of the main
results are given in Section \ref{sec_proof_main}, which has also an outline about the main steps in Section \ref{sec_proof_outline}. The proofs
of some auxilliary results, and a some mathematical details 
are placed in the Appendices.

\section{The spiked covariance model} \label{sec_spike_model}

Let $\bfx=(\xi^\top, \eta^\top)^\top$ be a centered random vector, where $\xi=(\xi(1), \ldots, \xi(M))^\top \in \R^M$ and $\eta=(\eta(1), \ldots, \eta(p))^\top \in \R^p$ are independent. We assume that the components of $\bfx$ admit finite moments of order $4$ and coordinates of $\eta$ are i.i.d. with variance equal to $1$. Moreover, suppose that the covariance matrix of $\xi$ is denoted by $\operatorname{cov}(\xi)=\bfSigma.$ Then, the covariance matrix of the generic element $\bfx$ can be decomposed as
\begin{align*}
    \cov(x) = \begin{pmatrix}
 \bfSigma & \mathbf{0} \\ 
 \mathbf{0} & \bfI_p
    \end{pmatrix}.
\end{align*}
Here, $\mathbf{0}$ denotes a matrix of appropriate dimension filled with zeros and $\bfI_p$ denotes the $p-$dimensional identity matrix. Note that we sometimes omit the index and just write $\bfI$ if the dimension is clear from the context. 
To introduce the spiked covariance model, we follow the framework of \cite{bai2008central, qinwen2014joint} and assume that $\bfSigma \in \R^{M\times M}$ admits $K$ distinct eigenvalues $\alpha_1, \ldots, \alpha_K \notin \{0,1\}$ with multiplicities $n_1, \ldots, n_K$ such that $n_1+\ldots + n_K=M$. (Note that the matrix $\bfSigma$ is fixed and does not depend on $n.$) Therefore, $\cov(\bfx)$ has eigenvalues $1, \alpha_1, \ldots, \alpha_K$, and $\alpha_1, \ldots, \alpha_K$ are called the spiked eigenvalues. More precisely, we choose $1 \leq M_a, M_b \leq K$ such that exactly $M_b$ of the spiked eigenvalues $\alpha_1, \ldots, \alpha_K$ are larger than $ 1+ \sqrt{y_{(t_0)}}$ and exactly $M_a$ are smaller than $1 - \sqrt{y_{(t_0)}}$ for some $t_0 \in (0,1)$. Thus, we have 
\begin{align*}
    \alpha_1 > \ldots > \alpha_{M_b} > 1 + \sqrt{y_{(t_0)}}, \quad 
    \alpha_M < \ldots < \alpha_{M - M_a + 1} < 1 - \sqrt{y_{(t_0)}} 
\end{align*}
For $k \in \{ 1, \ldots, M_b\} \cup \{ M - M_a + 1, \ldots, M\} $, let $J_k$ denotes the set of indices corresponding to the eigenvalue $\alpha_k$ with multiplicity $n_k.$ Formally, we let $s_i = n_1 + \ldots + n_i $ for $1 \leq i \leq M_b$ and $t_j= n_M + \ldots + n_j $ for $M - M_a + 1 \leq j \leq M.$ Then, we may write
\begin{align*}
    J_k = 
    \begin{cases}
        \{ s_{k-1} +1, \ldots , s_k \} & : \alpha_k > 1 + \sqrt{y_{(t_0)}}, \\
        \{ t_k, \ldots, t_{k-1}+1\} & : \alpha_k < 1 - \sqrt{y_{(t_0)}}.
    \end{cases}
\end{align*}
Let $\bfx_i=(\xi_i^\top, \eta_i^\top),~ 1\leq i \leq n,$ be a sample of independent random vectors distributed according to $\bfx$. Then, we define the sequential sample covariance matrix
\begin{align*}
    \bfS_{n,t} = \frac{1}{n} \sum_{i=1}^{\nt} \bfx_i \bfx_i^\top 
    = \begin{pmatrix}
        \bfS_{1,t} & \bfS_{12,t} \\
        \bfS_{21,t} & \bfS_{2,t} 
        \end{pmatrix}
        = \begin{pmatrix}
        \bfX_{1,t} \bfX_{1,t}^\top & \bfX_{1,t} \bfX_{2,t}^\top  \\
        \bfX_{2,t} \bfX_{1,t}^\top & \bfX_{2,t} \bfX_{2,t}^\top
        \end{pmatrix}, 
        \quad t\in [0,1],
\end{align*}
where
\begin{align*}
    \bfX_{1,t} = \frac{1}{\sqrt{n}} \lb \xi_1, \ldots, \xi_{\nt} \rb  \in \R^{M\times \nt} , ~ 
    \bfX_{2,t} = \frac{1}{\sqrt{n}} \lb \eta_1, \ldots, \eta_{\nt} \rb \in \R^{p\times \nt} .  
\end{align*}
In general, we use the notation $\lambda_1(\bfA) \geq \ldots \geq \lambda_p(\bfA)$ for the ordered eigenvalues of a symmetric $p\times p$ matrix $\bfA.$
More specifically, $\lambda_{n,1,t} = \lambda_1(\bfS_{n,t}) \geq \ldots \geq \lambda_p(\bfS_{n,t}) = \lambda_{n,p,t}$ denote the ordered eigenvalues of $\bfS_{n,t}$, which depend on the sequential parameter $t$ in a highly non-trivial way.  
For the asymptotic results, we assume that the number $M$ of spikes is fixed, and the dimension $p=p_n$ is of the same order as the sample size such that $\lim_{n\to\infty} p/n = y \in (0,1).$
It is known (see, e.g., \cite{baik2006eigenvalues}), that for $j \in \{ 1, \ldots, M_b\} \cup \{ M - M_a + 1, \ldots, M\} $
\begin{align}
    \lambda_{n,j,t} \to t \lb \alpha_k + \frac{y_{(t)} \alpha_k}{\alpha_k - 1} \rb 
    = t\alpha_k + \frac{y \alpha_k}{\alpha_k - 1} =: \phi_t (\alpha_k) \textnormal{ almost surely} .
    \label{def_phi}
\end{align}
For fixed $t\in [0,1]$, the fluctuations around the quantity  $$\lambda_{k,t}= \phi_t (\alpha_k) $$ were first studied in \cite{paul2007asymptotics, 
bai2008central}
and are seen to be of the order $1/\sqrt{n}.$
In this work, we are interested in the weak convergence of
\begin{align} \label{conv_process}
    \sqrt{n} \lb \lambda_{n,j,t} - \lambda_{k,t} \rb , \quad t\in [t_0,1],
\end{align}
considered as a process in a certain function space. For the formulation of our main result, we need the spectral decomposition of the spiked covariance matrix
\begin{align*}
    \bfSigma = \mathbf{U} \operatorname{diag}(\alpha_1 \bfI_{n_1}, \ldots, \alpha_K \bfI_{n_K} ) \mathbf{U}^\top, 
\end{align*}
where $\mathbf{U}$ is a $M \times M$ is an orthogonal matrix. Here, $\operatorname{diag}(\bfA)$ denotes the diagonal matrix, which has the same diagonal as the matrix $\bfA$.
The limiting distribution of \eqref{conv_process} depends on the eigenvalues of the random matrix
\begin{align*}
    \tilde \bfR_t (\lambda_{k,t} ) 
    = \mathbf{U}^\top \bfR_t (\lambda_{k,t}) \mathbf{U}, 
\end{align*}
where $\bfR_t(\lambda_{k,t})$ is a symmetric random matrix with centered Gaussian entries, and will be defined later on (see \eqref{def_R}).

\section{Main result on the process of eigenvalues}
\label{sec_main_results_eigenval}
 
The limiting distribution of $(\lambda_{n,j,t})$ will depend on quantities related to the \MP law. For $\lambda \notin I_t= [( 1-\sqrt{y_{(t)}} ) ^2, ( 1 + \sqrt{y_{(t)}} ) ^2], y_{(t)} = y/t,  t\in (0,1] $, we define
\begin{align*}
    m_{3,t}(\lambda) 
    = \int \frac{x}{(\lambda - x)^2} dF_{y_{(t)}} (x).
\end{align*}
For $\lambda = \lambda_{k,t}$, we give an explicit formula for 
$m_{3,t}(\lambda_{k,t})$ in Lemma \ref{lem_formula_m_lambda}. 
Here, $\tilde{F}_{y_{(t)}}$ denotes the standard \MP law with unit variance and scale parameter $y_{(t)}= y/t = \lim_{n\to\infty} p/ \nt$ and $F_{y_{(t)}}(\cdot) = \tilde{F}_{y_{(t)}}(\cdot/t)$ is a transformation of the \MP law which adjusts for the factor $1/n$ in the definition of $\bfS_{2,t}$ (instead of $1/\nt).$ That is, the limiting spectral distribution of $\bfS_{2,t}$ is $F_{y_{(t)}}$.
The function $m_t(\lambda) = m_t$ denotes the Stietljes transform of  \MP law $F^{y_{(t)}}$ and satisfies
\begin{align} \label{eq_stieltjes_MP}
\frac{1}{m_t} = t - y - \lambda - \lambda y m_t. 
\end{align}
Its dual Stieltjes transform $\su_t(\lambda) = \su_t$ satisfies
\begin{align}
    \label{eq_m_mu}
    \su_t = - \frac{1 - y_{(t)}}{\lambda} + y_{(t)} m_t. 
\end{align}
We are now in the position to state our main result concerning the joint convergence of the sample eigenvalues corresponding to several spiked eigenvalues; compare \cite[Section 3.2]{qinwen2014joint} for the case $t=1.$ In the following discussion, the symbol $\rightsquigarrow$ denotes the weak convergence of stochastic processes. 
\begin{theorem} \label{thm_joint_conv}
     Let $t_0\in (0,1)$ and $\mathcal{K} \subset \{ 1, \ldots, K\}$, such that
 $\alpha_k \notin [ 1 - \sqrt{y_{(t_0)}}, 1 + \sqrt{y_{(t_0)}} ] $ for every $k\in \mathcal{K}.$ 
    Then, we have 
    \begin{align*}
        \left\{ \sqrt{n} \lb \lambda_{n,j,t} - \lambda_{k,t} \rb
        \right\}_{k \in \mathcal{K}, j\in J_k, t\in [t_0,1]} \rightsquigarrow
        \left\{ \lambda_j \lb \frac{1}{y m_{3,t}(\lambda_{k,t}) \alpha_k} \tilde{\bfR}_{t}^{(kk)} (\lambda_{k,t}) \rb \right\}_{k \in \mathcal{K}, j\in J_k, t\in [t_0,1]}  
    \end{align*}
    in $(\ell^\infty([t_0,1]))^{\sum_{k \in \mathcal{K}} n_k}$. Here, for $k\in\mathcal{K},$ 
    $\tilde{\bfR}_{t}^{(kk)} (\lambda_{k,t}) =  (r_{ij}^{(k,t)})_{1 \leq i,j\leq n_k} $ denotes the $n_k \times n_k$ matrix that constitutes the $k$th diagonal block of $\tilde{\bfR}_t(\lambda_{k,t})$, 
    and $(\tilde\bfR_t (\lambda_{k,t}) )_{k \in \mathcal{K}, t\in [t_0,1]}$ is a process of $\lb \sum_{k\in\mathcal{K}} n_k \rb \times \lb \sum_{k\in\mathcal{K}} n_k \rb $ matrices. The entries $r_{ij}^{(k,t)}$ are zero-mean Gaussian random variables with covariance kernel 
    \begin{align*}
    \cov \lb r_{ij}^{(k',s)}, r_{ml}^{(k,t)} \rb 
    & = \omega_{s,t, k', k} \lb \E [ \xi(i) \xi(j) \xi(m) \xi(l) ]
    - \Sigma_{ij} \Sigma_{ml} \rb 
    \\ & + (\theta_{s,t, k', k} - \omega_{s,t, k', k} ) \lb 
    \E [ \eta(i) \eta(m)] \E [ \eta (j) \eta (l)] 
    + \E [ \eta(i) \eta(l)] \E [ \eta (j) \eta (m)] 
    \rb \\
    & = \omega_{s,t, k', k} \lb \E [ \xi(i) \xi(j) \xi(m) \xi(l) ]
    - \Sigma_{ij} \Sigma_{ml} \rb 
     \\ & \quad 
    + (\theta_{s,t, k', k} - \omega_{s,t, k', k} ) \lb 
    \delta_{im} \delta_{jl} + \delta_{il} \delta_{jm} 
    \rb 
\end{align*}
for $1 \leq i,j,m,l \leq M$, $s,t\in [t_0,1], k, k' \in \mathcal{K}.$  
The quantities $\omega_{s,t, k', k}$ and $\theta_{s,t, k', k}$ are given by 
\begin{align*}
     \omega_{s,t, k', k} & = 
     \tau_{s,t, k', k} + \psi_{s,t, k', k},
    \\ 
     \theta_{s,t, k', k}  &= \tau_{s,t, k', k} + 
     ( s \wedge t ) \zeta_{s,t,k,k'} \big\{ y^2 m_s(\lambda_{k',s}) m_t (\lambda_{k,t}) + \kappa_{s,t,k,k'} \zeta_{s,t,k,k'} \big\}
\end{align*} 
where
\begin{align*}
    \tau_{s,t,k,k'} &=  s \wedge t 
    -   (s \wedge t) y \left\{ \frac{1}{t} \lb 1 + \lambda_{k,t} m_{t} (\lambda_{k,t}) \rb 
    + \frac{1}{s} \lb 1 + \lambda_{k',s} m_{s} (\lambda_{k',s}) \rb 
    \right\}, \\
    \psi_{s,t,k,k'} & =\frac{ ( s \wedge t ) y^2 \lambda_{k',s} m_{s} (\lambda_{k',s}) \lambda_{k,t} m_{t} (\lambda_{k,t})  }{ \lambda_{k',s} \lambda_{k,t} \lb 1+  y m_{s} (\lambda_{k',s}) \rb \lb 1 + y_{} m_t(\lambda_{k,t})  \rb  } 
   , \\
    \kappa_{s,t,k,k'} & =  \frac{ (s \wedge t ) y m_s(\lambda_{k',s}) m_t(\lambda_{k,t})}{1 - ( s \wedge t ) y \lambda_{k,t} \lambda_{k',s} \su_s (\lambda_{k',s}) \su_t (\lambda_{k,t}) m_s (\lambda_{k',s}) m_t (\lambda_{k,t}) } , \\
    \zeta_{s,t,k,k'} &= 1 
            + y \lambda_{k',s} \su_s (\lambda_{k',s}) m_s (\lambda_{k',s})
            + y \lambda_{k,t} \su_t (\lambda_{k,t})  m_t (\lambda_{k,t}) \\ & \quad 
            + y^2 \lambda_{k',s} \lambda_{k,t} \su_s (\lambda_{k',s}) \su_t (\lambda_{k,t}) m_s (\lambda_{k',s}) m_t (\lambda_{k,t}).
\end{align*}
\end{theorem}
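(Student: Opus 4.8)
The plan is to characterize each sample spiked eigenvalue through a low-dimensional determinantal equation, to expand this equation on two scales around the deterministic limit $\lambda_{k,t}=\phi_t(\alpha_k)$, and then to reduce the whole problem to a functional central limit theorem for an $M\times M$ random-matrix-valued process indexed by $t$, followed by a continuous mapping argument. First I would exploit the block form of $\bfS_{n,t}$: for $\lambda$ outside the support of the limiting spectral distribution $F_{y_{(t)}}$ of $\bfS_{2,t}$, the Schur complement with respect to the noise block shows that $\lambda$ is an eigenvalue of $\bfS_{n,t}$ if and only if
\[
\det\bigl( \bfK_{n,t}(\lambda) - \lambda \bfI_M \bigr) = 0, \qquad \bfK_{n,t}(\lambda) := \bfS_{1,t} + \bfS_{12,t}\bigl(\lambda \bfI_p - \bfS_{2,t}\bigr)\inv \bfS_{21,t}.
\]
Standard localization (``sticking'') estimates for spiked models, as in the phase-transition literature cited in the introduction, made uniform in $t \in [t_0,1]$, guarantee that for $k \in \mathcal K$ and $j \in J_k$ the eigenvalue $\lambda_{n,j,t}$ eventually falls in a small neighbourhood of $\lambda_{k,t}$ separated from the bulk and from the other spikes, so that this equivalence applies and the fluctuation occurs at scale $n^{-1/2}$. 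Conjugating by the eigenbasis $\mathbf{U}$ of $\bfSigma$ turns the equation into $\det\bigl( \mathbf{U}^\top \bfK_{n,t}(\lambda) \mathbf{U} - \lambda \bfI_M \bigr) = 0$.

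Next I would expand $\mathbf{U}^\top \bfK_{n,t}(\lambda) \mathbf{U}$. A first-order analysis, using the law of large numbers for quadratic forms in the isotropic coordinates $\eta$ and the deterministic equivalent for $(\lambda \bfI_p - \bfS_{2,t})\inv$ governed by the \MP law with ratio $y_{(t)}$, yields a deterministic limit which, restricted to the block $J_k$, is the scalar matrix $h_t(\lambda)\bfI_{n_k}$, with $\lambda_{k,t}=\phi_t(\alpha_k)$ the unique solution of $h_t(\lambda)=\lambda$ and $1-h_t'(\lambda_{k,t}) = y\,m_{3,t}(\lambda_{k,t})\,\alpha_k$ by Lemma \ref{lem_formula_m_lambda}. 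At the second scale I would write $\mathbf{U}^\top \bfK_{n,t}(\lambda_{k,t}) \mathbf{U} = \mathbf{U}^\top\bfK^{\mathrm{det}}_t(\lambda_{k,t})\mathbf{U} + n^{-1/2}\, \tilde\bfR_t(\lambda_{k,t}) + \op(n^{-1/2})$, substitute into the determinantal equation, Taylor-expand in $\lambda$ around $\lambda_{k,t}$, and use the Schur complement once more to eliminate the non-degenerate complementary blocks (which enter the relevant minor only at order $O(1)$). This leaves the $n_k$-dimensional equation $\det\bigl( (h_t'(\lambda_{k,t})-1)\sqrt n(\lambda - \lambda_{k,t})\bfI_{n_k} + \tilde\bfR_t^{(kk)}(\lambda_{k,t}) + \op(1) \bigr) = 0$, whence $\{\sqrt n(\lambda_{n,j,t} - \lambda_{k,t})\}_{j\in J_k}$ equals, up to $\op(1)$ and uniformly in $t$, the ordered eigenvalues of $\frac{1}{y m_{3,t}(\lambda_{k,t})\alpha_k}\tilde\bfR_t^{(kk)}(\lambda_{k,t})$ (with the eigenvalues relabelled within each block). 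Joint convergence over $k \in \mathcal K$ is automatic since distinct spikes localize the equation on disjoint diagonal blocks.

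The core of the argument is then the functional central limit theorem
\[
\Bigl\{ \mathbf{U}^\top \sqrt n\bigl( \bfK_{n,t}(\lambda_{k,t}) - \E\,\bfK_{n,t}(\lambda_{k,t}) \bigr)\mathbf{U} \Bigr\}_{k \in \mathcal K,\, t \in [t_0,1]} \rightsquigarrow \bigl\{ \tilde\bfR_t(\lambda_{k,t}) \bigr\}_{k \in \mathcal K,\, t \in [t_0,1]}.
\]
For the finite-dimensional distributions I would decompose each entry into the contribution of $\bfS_{1,t}$ and that of the bilinear term. The randomness carried by $\xi$ — entering both through $\bfS_{1,t}$ and through $\bfX_{1,t}$ in the bilinear term — produces the fourth-cumulant expression $\E[\xi(i)\xi(j)\xi(m)\xi(l)] - \Sigma_{ij}\Sigma_{ml}$ weighted by $\omega_{s,t,k',k}$; this part is handled by a martingale/Lindeberg central limit theorem for linear-quadratic forms in $\xi$ after conditioning on the $\eta$-coordinates. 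The remaining $\eta$-randomness, governing $(\lambda \bfI_p - \bfS_{2,t})\inv$ and $\bfX_{2,t}$, is resolved by the known central limit theorems for linear spectral statistics and for bilinear forms of resolvents of $\bfS_{2,t}$, and produces the $\delta_{im}\delta_{jl} + \delta_{il}\delta_{jm}$ structure (isotropy of $\eta$) with weight $\theta_{s,t,k',k}-\omega_{s,t,k',k}$; accumulating the deterministic-equivalent corrections yields the explicit coefficients $\tau,\psi,\kappa,\zeta$. Cross terms vanish by independence of $\xi$ and $\eta$, and since $\bfX_{1,s},\bfX_{1,t}$ and $\bfX_{2,s},\bfX_{2,t}$ share exactly their first $\lfloor n(s\wedge t)\rfloor$ columns, every covariance comes out proportional to $s\wedge t$. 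For asymptotic tightness in $(\ell^\infty([t_0,1]))^{\sum_{k\in\mathcal K} n_k}$ I would establish an increment bound of the form $\E\,\| \bfK_{n,t}(\lambda) - \bfK_{n,s}(\lambda) \|^{2} \lesssim |t-s|$ (upgraded to a $|t-s|^{1+\epsilon}$ bound by a routine regularization) from resolvent perturbation identities and the boundedness of $\bfSigma$, together with the Lipschitz continuity of $t \mapsto \lambda_{k,t}$ on $[t_0,1]$. Combining the finite-dimensional convergence with tightness gives weak convergence of the matrix process, and applying the continuous mapping theorem to the map sending a symmetric-matrix-valued path to its ordered-eigenvalue paths yields the claimed convergence.

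The main obstacle is the functional central limit theorem above and, within it, the simultaneous bookkeeping that produces the explicit kernel: one must separate the signal contribution (fourth moments of $\xi$) from the noise contribution, keep track of how the deterministic-equivalent corrections to $(\lambda\bfI_p - \bfS_{2,t})\inv$ enter the limiting variance through $m_t$, $\su_t$ and $m_{3,t}$, and control the dependence between two time points $s$ and $t$, which is not a mere rescaling because only the first $\lfloor n(s\wedge t)\rfloor$ observations are shared. Establishing the uniform-in-$t$ resolvent estimates needed both for this central limit theorem and for the tightness bound is the technically heaviest step; by comparison the eigenvalue localization and the continuous mapping argument are comparatively routine.
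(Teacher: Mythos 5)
Your route is the same as the paper's: reduce the eigenvalue to the determinantal equation $\det(\bfK_{n,t}(\lambda)-\lambda\bfI_M)=0$ via the Schur complement with respect to the noise block, localize $\lambda_{n,j,t}$ near $\lambda_{k,t}$ uniformly in $t$, expand $\bfK_{n,t}(\lambda_{k,t})$ into a deterministic equivalent plus $n^{-1/2}\bfR_{n,t}(\lambda_{k,t})$, linearize in $\sqrt{n}(\lambda-\lambda_{k,t})$ using $m_{3,t}$, and reduce everything to a functional CLT (fidis plus tightness) for the matrix-valued process, with the two-time covariance computed by separating the $\xi$-randomness (fourth cumulants, weight $\omega$) from the $\eta$-randomness (resolvent traces, weight $\theta-\omega$). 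The paper passes from matrix convergence to eigenvalue convergence via the almost-sure representation theorem rather than a literal continuous-mapping step, but that is a cosmetic difference.

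There is, however, one genuine gap: your tightness argument. You propose to prove $\E\|\bfK_{n,t}(\lambda)-\bfK_{n,s}(\lambda)\|^{2}\lesssim|t-s|$ and to ``upgrade'' this to $|t-s|^{1+\epsilon}$ by a routine regularization. No such upgrade exists: a second-moment increment bound of order exactly $|t-s|$ is what Brownian motion satisfies and is insufficient for asymptotic tightness in $\ell^\infty([t_0,1])$; the exponent $1+\epsilon$ cannot be manufactured from the exponent $1$. What actually makes the argument work in the paper is (i) a genuinely higher-moment bound, $\E|Z_2|^{4}\lesssim\big(\tfrac{\nt-\ns}{n}\big)^{2}+\tfrac{\nt-\ns}{n^{2}}$, for the part of the increment coming from the new columns, combined with a second-moment bound carrying an extra factor $n^{-\delta'}$ for the resolvent-perturbation part $Z_1$, and (ii) the observation that increments vanish identically when $|t-s|<1/n$ because $\lfloor ns\rfloor=\lfloor nt\rfloor$, which is what allows the additive term $\tfrac{\nt-\ns}{n^{1+\delta'}}$ to be tolerated in the chaining criterion. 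Obtaining these bounds requires the truncation $|x_{ij}|\le\delta_n\sqrt{n}$, uniform control of $\|\D_t\|$ on a high-probability event, and repeated use of the Sherman--Morrison identity together with quadratic-form concentration; this is precisely one of the two technical bottlenecks the paper singles out, and your sketch does not supply it. (A small separate remark: your normalizing constant $1/\{y\,m_{3,t}(\lambda_{k,t})\alpha_k\}$ matches the theorem as stated, while the paper's own derivation produces $1/\{1+y\,m_{3,t}(\lambda_{k,t})\alpha_k\}$; this discrepancy is internal to the paper and not a defect of your argument, but you should be aware that the linearization of the determinantal equation yields the latter form.)
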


\begin{remark} \label{rem}
{ \rm 
    \begin{enumerate} 
        \item The limiting process in Theorem \ref{thm_joint_conv} is in general non-Gaussian, which aligns with the result in \cite{bai2008limit} for the case $t=1$. If and only if the spiked eigenvalue $\alpha_k$ of $\bfSigma$ is of simple multiplicity, the limiting process of the corresponding sample eigenvalue $\lambda_{n,j,t}$ is Gaussian. 
          \item          
       The terms $\omega_{s,t,k',k}$ and $\theta_{s,t,k'k}$ determine the covariance structure and thus, the nature of the limiting process. 
         These quantities arise from an application of a functional central limit theorem for sesquilinear forms (see Theorem \ref{thm_functional_clt}). To gain further insight, let us assume for convenience that $k=k'$. Then, the quantities $\theta_{s,t}=\theta_{s,t,k,k}$ and $\omega_{s,t}=\omega_{s,t,k,k}$  occur as the limits of $(1/n) \tr ( \bfB_{n,t} \circ \bfB_{n,s} )$ and $(1/n) \tr ( \bfB_{n,t} \bfB_{n,s} )$, where $\bfB_{n,s}$ and $\bfB_{n,t}$ include resolvent-type matrices and $\circ$ denotes the Hadamard product (for more details, see \eqref{omega_limit} and \eqref{theta_limit}). Due to the intricate structure induced by the superposition of the samples $\bfx_1, \ldots, \bfx_{\ns}$ and $\bfx_1, \ldots, \bfx_{\nt}$, computing these terms is one of the key technical challenges addressed in this paper.
         For more details on this step, please refer to the outline of the proof in Section \ref{sec_proof_outline}.
    \item 
   Regarding the entries of the matrix $\tilde{\mathbf{R}}^{(kk)}(\lambda_{k,t})$, their variances are given by,
\begin{align*}
    \Var ( r_{ij}^{(k,t)})
    = \omega_{t,t,k,k} \lb \E [ \xi(i)^2 \xi(j)^2] - 2\Sigma_{ij}^2 - \Sigma_{ii} \Sigma_{jj} \rb + \theta_{t,t,k,k} \lb \Sigma_{ij}^2 + \Sigma_{ii} \Sigma_{jj} \rb  ,
\end{align*}
which simplifies to
\begin{align*}
     \Var ( r_{ii}^{(k,t)})
      = \omega_{t,t,k,k} \lb \E [ \xi(i)^4] - 3\Sigma_{ii}^2 \rb + 2 \theta_{t,t,k,k} \Sigma_{ii}^2
\end{align*}
for a diagonal element. 
If the kurtosis of $\xi(i)$ equals three, that is, $\E[ \xi(i)^4]/\Sigma_{ii}^2 =3$, then we have
then $\Var ( r_{ii}^{(k,t)})
      =  2 \theta_{t,t,k,k} \Sigma_{ii}^2$. 
      
\item 
In general, the processes of sample eigenvalues
        \begin{align*}
            \left\{ \sqrt{n} \lb \lambda_{n,j,t} - \lambda_{k,t} \rb
        \right\}_{j\in J_k, t\in [t_0,1]}, \quad 1 \leq k \leq K,
        \end{align*}
        are not asymptotically independent, 
         which has been found in \cite{qinwen2014joint} for the finite-dimensional case $t=1$ (see Remark 3.4, Remark 3.5 and Section 3.2.2 in their work).
        However, if 
         \begin{align} \label{eq_fac_mean}
             \E [ \xi(i) \xi(j) \xi(m) \xi(l) ]
             = \E [ \xi(i) \xi(j) ] \E[ \xi(m) \xi(l) ], \quad i,j \in J_k, m,l\in J_{k'}
         \end{align}
         for some $1 \leq k \neq k' \leq K$, then $r_{ij}^{(k',s)}$ and $r_{ml}^{(k,t)}$ are independent, and so are 
         \begin{align*}
              \left\{ \sqrt{n} \lb \lambda_{n,j,t} - \lambda_{k,t} \rb
        \right\}_{j\in J_k, t\in [t_0,1]} 
        \textnormal{ and }
         \left\{ \sqrt{n} \lb \lambda_{n,j,t} - \lambda_{k',t} \rb
        \right\}_{j\in J_{k'}, t\in [t_0,1]}.
         \end{align*} 
    Note that \eqref{eq_fac_mean} is satisfied if the components of $\xi$ are independent. 

\item  
Let $t\in [t_0,1]$ and $k\in \{1, \ldots, K\}$ such that the corresponding block $(\Sigma_{ij})_{i,j\in J_k}=\sigma_k^2 \bfI_{n_k}$ of the population covariance matrix is spherical with some variance $         \sigma_k^2>0$. If the kurtosis of $\xi(i)$ equals three, that is, $\E[ \xi(i)^4]/\Sigma_{ii}^2 =3$, for every $i\in J_k$, then the corresponding block $\tilde{\bfR}_{t}^{(kk)}$ appearing in the limiting process in Theorem \ref{thm_main} can be interpreted as a rescaled GOE
(Gaussian Orthogonal Ensemble). 
      \end{enumerate} 
    }
\end{remark}

\section{Change-point test for large covariance matrices} \label{sec_test}

In this section, we develop statistical tests to monitor changes in the covariance structure in terms of changes in magnitude of their spiked eigenvalues. Structural changes of this type, therefore, are hard to be detected by tests based 
aggregative statistics, such as functions of tracial moments of sequential sample covariances. A more effective strategy is to focus attention on the  leading eigenvalues of the sequential sample covariances. In Section \ref{sec_known_spikes}, we explore a test for structural breaks in a scenario where the initial spiked eigenvalues are specified. Furthermore, we present an adaption of our methodology in Section \ref{sec_unknown_spikes} through a plug-in approach that incorporates estimation of the unknown population spiked eigenvalues, and functions thereof. 
Our theoretical statistical guarantees are illustrated by a simulation study in Section \ref{sec_sim}. 

For this purpose, let $\bfy_i = (\xi_i^\top, \eta_i^\top)$, $1 \leq i \leq n,$ be independent random vectors with covariance matrix 
\begin{align*}
    \cov(\bfy_i) = \begin{pmatrix}
 \bfSigma_i & \mathbf{0} \\ 
 \mathbf{0} & \bfI_p
    \end{pmatrix}.
\end{align*}
Note that the random vectors $\xi_1, \ldots, \xi_n, \eta_1, \ldots, \eta_n$ and the spiked covariances $\bfSigma_1, \ldots, \bfSigma_n$ are modelled as in Section \ref{sec_spike_model} with the only difference that $\bfSigma_i$ may depend on $1 \leq i \leq n$ (but not on $n$). 

\subsection{Testing for structural breaks with known initial spiked eigenvalues} \label{sec_known_spikes}

Let $\bfSigma_0$ be a given covariance matrix, and we assume that the eigenvalues of $\bfSigma_i$ are of simple multiplicity for $1 \leq i \leq M$. 
Then, the test problem can be formulated as follows:
\begin{align}
    H_0: & \bfSigma_1 = \ldots = \bfSigma_n = \bfSigma_0 
    \textnormal{ vs. } \label{hypothesis} \\ 
    H_1: & ~\exists~ t^\star \in (t_0,1], 1 \leq k \leq M:   \lambda_k (\bfSigma_1) = \ldots = \lambda_k(\bfSigma_{\lfloor nt^\star \rfloor } ) = \lambda_k(\bfSigma_0) \neq \lambda_k(\bfSigma_{\lfloor nt^\star \rfloor + 1 } ) = \ldots = \lambda_k(\bfSigma_n). \nonumber 
\end{align}

We propose two different test statistics:
    \begin{align}
         M_n & = \sup_{t \in [t_0,1], 1 \leq k \leq M} 
        n \left\{  \lambda_{n,k,t} - \phi_{n,t} ( \lambda_k( \bfSigma_0 ) )  \right\}^2, \label{def_max_stat} \\
        S_n & = \sup_{t \in [t_0,1]} \sum_{k=1}^M
        n \left\{  \lambda_{n,k,t} - \phi_{n,t} ( \lambda_k( \bfSigma_0 ) )  \right\}^2, \label{def_sum_stat} 
    \end{align}
    where
    \begin{align*}
        \phi_{n,t}(\alpha) = t\alpha + \frac{y_n \alpha}{\alpha - 1}.
    \end{align*}
    In Figure \ref{fig_hist1}, the distributions of $\log M_n$ and $\log S_n$ are illustrated by histograms for different values of $n$ and $p$. 
    The following theorem provides the asymptotic behavior of $M_n$ and $S_n$ under $H_0$ and $H_1$.  
    
\begin{theorem} \label{thm_test_statistics}
Let $t_0\in (0,1)$, $y_n - y = o(n^{-1/2})$ and assume that the eigenvalues of $\bfSigma_1, \ldots, \bfSigma_n$ are outside the interval $ [ 1 - \sqrt{y_{(t_0)}}, 1 + \sqrt{y_{(t_0)}} ] $.
    Under $H_0$, we have for $n\to\infty$
    \begin{align*}
        M_n & \cond \sup_{t\in[t_0,1], 1 \leq k \leq M} G_{k,t}^2, \\
        S_n & \cond \sup_{t\in[t_0,1]} \sum_{k=1}^M G_{k,t}^2,
    \end{align*}
    where $(G_{k,t})_{t\in [t_0,1]}$ are centered Gaussian processes for $1 \leq k \leq M$. More precisely, we have
    \begin{align*}
        G_{k,t} =
        \frac{ r_{kk}^{(k,t)} }{1 + y m_{3,t}(\lambda_{k,t}) \alpha_k} 
        , \quad 1 \leq k \leq M, ~ t\in [t_0,1],
    \end{align*}
    where the covariance kernel of $r_{kk}^{(k,t)}$ is given in Theorem \ref{thm_joint_conv}. 
    Under $H_1$, we have $\min ( M_n, S_n ) \conp \infty$.
\end{theorem}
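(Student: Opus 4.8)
The plan is to treat the three assertions separately: under $H_0$ the limits of $M_n$ and $S_n$ follow from Theorem \ref{thm_joint_conv} combined with a Slutsky-type adjustment of the centering and the continuous mapping theorem, while under $H_1$ it suffices to exhibit one time point and one index at which the centered, rescaled eigenvalue has a nonzero almost sure limit.

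Consider first the null hypothesis. Since $\bfSigma_0$ has simple eigenvalues, Theorem \ref{thm_joint_conv} applies with $\mathcal{K}=\{1,\dots,M\}$ and $n_1=\dots=n_M=1$; the $1\times1$ diagonal blocks $\tilde{\bfR}^{(kk)}_t(\lambda_{k,t})$ are then the scalars $r_{kk}^{(k,t)}$, and one obtains the joint weak convergence of $(\sqrt n(\lambda_{n,k,t}-\lambda_{k,t}))_{1\le k\le M,\,t\in[t_0,1]}$ to the centered Gaussian process $(G_{k,t})$ in $(\ell^\infty([t_0,1]))^M$ (Gaussian because the entries of $(\tilde{\bfR}_t)$ are jointly Gaussian, so $(G_{k,t})$ is a deterministic linear image of a Gaussian process). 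The statistics are centered at $\phi_{n,t}(\lambda_k(\bfSigma_0))$ rather than at $\lambda_{k,t}=\phi_t(\alpha_k)$; since $\phi_{n,t}(\alpha_k)-\phi_t(\alpha_k)=(y_n-y)\alpha_k/(\alpha_k-1)$ does not depend on $t$, the deterministic shift has sup-norm $\max_k\sqrt n\,|y_n-y|\,|\alpha_k/(\alpha_k-1)|\to0$ by the assumption $y_n-y=o(n^{-1/2})$, and adding a deterministic element of vanishing norm preserves weak convergence in $(\ell^\infty([t_0,1]))^M$, so $(\sqrt n(\lambda_{n,k,t}-\phi_{n,t}(\lambda_k(\bfSigma_0))))_{k,t}\rightsquigarrow(G_{k,t})_{k,t}$. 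Finally the maps $(f_1,\dots,f_M)\mapsto\sup_{t\in[t_0,1],\,1\le k\le M}f_k(t)^2=\max_k\|f_k\|_\infty^2$ and $(f_1,\dots,f_M)\mapsto\sup_{t\in[t_0,1]}\sum_{k=1}^Mf_k(t)^2$ are continuous (indeed locally Lipschitz) on $(\ell^\infty([t_0,1]))^M$, so the continuous mapping theorem yields $M_n\cond\sup_{t,k}G_{k,t}^2$ and $S_n\cond\sup_t\sum_kG_{k,t}^2$.

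For consistency under $H_1$, let $k$ be a change-point index, fix $t_1\in(t^\star,1]$, and note that both $M_n$ and $S_n$ dominate $n\{\lambda_{n,k,t_1}-\phi_{n,t_1}(\lambda_k(\bfSigma_0))\}^2$, so it is enough to show this term diverges. For $t\ge t^\star$ the signal block $\bfS_{1,t}=n^{-1}\sum_{i=1}^{\nt}\xi_i\xi_i^\top$ is built from $\lfloor nt^\star\rfloor$ vectors of covariance $\bfSigma_0$ and $\nt-\lfloor nt^\star\rfloor$ vectors of the post-change signal covariance; by the law of large numbers for fixed-size matrices it converges almost surely to a deterministic matrix $\bfSigma_{*,t}$, and because the change affects only the magnitude of the $k$-th eigenvalue (not the directionality) the relevant eigenvalue of $\bfSigma_{*,t}$ equals $t^\star\lambda_k(\bfSigma_0)+(t-t^\star)\beta_k$ with $\beta_k$ the post-change value, which differs from $t\,\lambda_k(\bfSigma_0)$ since $\beta_k\ne\lambda_k(\bfSigma_0)$ and $t>t^\star$. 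Invoking the first-order limit for the spiked eigenvalues of the sequential sample covariance whose effective signal block is this random, a.s.\ convergent matrix --- a generalized-spiked-model statement extending the convergence \eqref{def_phi} --- the sample eigenvalue $\lambda_{n,k,t_1}$ converges almost surely to the image of an eigenvalue of $\bfSigma_{*,t_1}$ under the map $\nu\mapsto\nu+y\nu/(\nu-t_1)$ (which is strictly monotone on the supercritical range and maps $t_1\lambda_k(\bfSigma_0)$ to $\phi_{t_1}(\lambda_k(\bfSigma_0))$), whereas $\phi_{n,t_1}(\lambda_k(\bfSigma_0))\to\phi_{t_1}(\lambda_k(\bfSigma_0))$. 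By strict monotonicity (a reordering can only replace the target by the image of another, still distinct, spike), the two limits differ by some $c\ne0$, so $n\{\lambda_{n,k,t_1}-\phi_{n,t_1}(\lambda_k(\bfSigma_0))\}^2\ge nc^2/4$ for all large $n$ almost surely, and hence $\min(M_n,S_n)\conp\infty$.

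The null part is routine once Theorem \ref{thm_joint_conv} is available. The main obstacle is the alternative: justifying the first-order almost-sure limit of the spiked eigenvalues of $\bfS_{n,t}$ when the population covariance is only piecewise constant over the sample, so that the effective signal block $\bfS_{1,t}$ is random. This can be handled either by conditioning on (equivalently, diagonalizing) $\bfS_{1,t}$ and applying a spiked-model result with perturbed-but-convergent signal eigenvalues, or by a deterministic-equivalent/resolvent computation parallel to the one underlying \eqref{def_phi}; the coupling between the signal and noise blocks through $\bfS_{12,t}$ is the delicate point. One must also allow for the changed eigenvalue crossing a bulk edge or swapping order with a neighbouring spike, but in every case the almost-sure limit of $\lambda_{n,k,t_1}$ stays bounded away from $\phi_{t_1}(\lambda_k(\bfSigma_0))$, which is all that is needed.
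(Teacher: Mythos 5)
Your proposal is correct and follows essentially the same route as the paper, which likewise deduces the null limits from Theorem \ref{thm_joint_conv} (with $n_k=1$, so the diagonal blocks reduce to the scalars $r_{kk}^{(k,t)}$) together with the continuous mapping theorem, and establishes consistency by observing that under $H_1$ the centering $\phi_{t}(\lambda_k(\bfSigma_0))$ must differ from the first-order limit of $\lambda_{n,k,t}$ at some $t$, producing a drift of order $\sqrt{n}$ before squaring. You are in fact somewhat more explicit than the paper on two points it glosses over — the negligibility of the $\phi_{n,t}$-versus-$\phi_t$ centering under $y_n-y=o(n^{-1/2})$, and the first-order limit of the spiked eigenvalues when the signal covariance is only piecewise constant — and your honest flagging of the latter as the genuinely delicate step is consistent with the paper's decision to omit the detailed argument.
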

Under $H_0$, the previous result is a direct consequence of Theorem \ref{thm_joint_conv} and the continuous mapping theorem (see \cite[Theorem 1.3.6]{vandervaart1996}), and for the assertion under $H_1$, we note that if $\phi_t ( \lambda_k( \bfSigma_0 ) ) = \phi_t ( \lambda_k( \bfSigma_n ) )$ for all $t \in [t_0,1], ~ 1 \leq k \leq M,$ then $\lambda_k( \bfSigma_0 ) = \lambda_k( \bfSigma_n ) $. Thus, we omit a detailed proof for the sake of brevity. 
\begin{figure}[!ht]
    \centering
             \includegraphics[width=0.49\columnwidth, height=0.5\textheight, keepaspectratio]
             {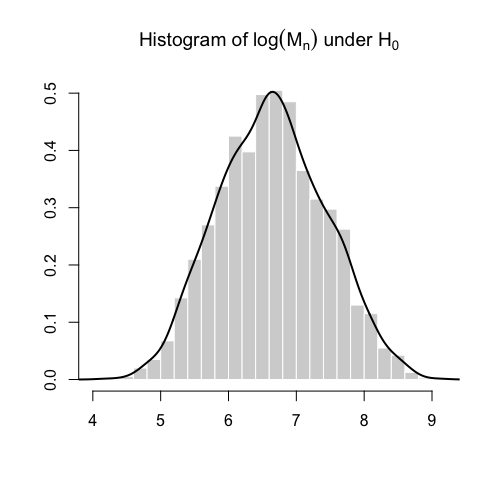}
             \includegraphics[width=0.49\columnwidth, height=0.5\textheight, keepaspectratio]
             {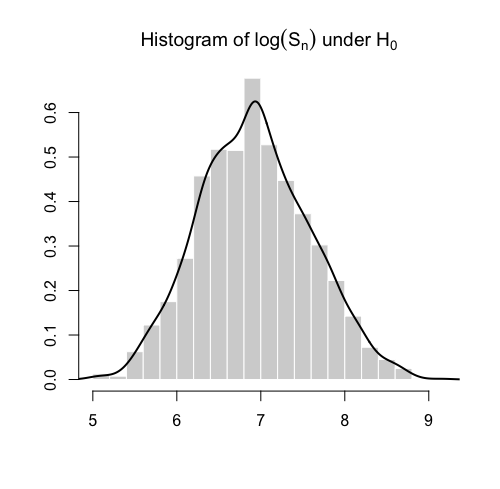} 
             \includegraphics[width=0.49\columnwidth, height=0.5\textheight, keepaspectratio]
             {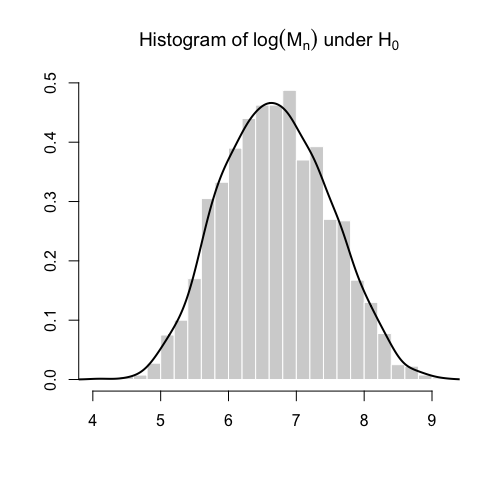}
             \includegraphics[width=0.49\columnwidth, height=0.5\textheight, keepaspectratio]
             {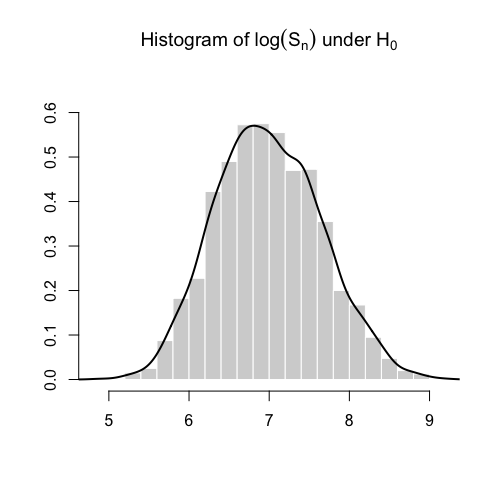} 
         \caption{   \it  Histograms of the statistics $\log M_n$ given in \eqref{def_max_stat} (left column) and $\log S_n$ given in \eqref{def_sum_stat} (right column) under $H_0$ given in \eqref{alt1} ($\delta = 0$) based on $2000$ simulation runs,   where $n=400, ~ p=200$ (first row) and $n=500, ~ p=300$ (last row).  }
    \label{fig_hist1}
    \end{figure}

\begin{remark}  \label{remark_test}
{\rm
    Let $q_{m,\alpha}$ be the $\alpha$-quantile of $\sup_{t\in[t_0,1], 1 \leq k \leq M} G_{k,t}^2$. Then, Theorem \ref{thm_test_statistics} shows that the test, which rejects $H_0$, whenever
    \begin{align} \label{test_max}
      M_n > q_{m,1 - \alpha} 
    \end{align}
    attains asymptotically the prescribed level $\alpha\in(0,1)$ under $H_0$, and is consistent for $H_1$. An analogous result holds true for the test constructed upon the sum-type statistic $S_n$. More precisely, let $q_{s,\alpha}$ be the $\alpha$-quantile of $\sup_{t\in[t_0,1]} \sum_{k=1}^M G_{k,t}^2$, and reject $H_0$ in \eqref{hypothesis} whenever 
    \begin{align} \label{test_sum}
      S_n > q_{s,1 - \alpha} .
    \end{align}
}
\end{remark}
  
\begin{remark}  \label{remark_multiplicity}    
{\rm 
    We emphasize that the assumption concerning the single multiplicity of the spiked eigenvalues is solely for technical convenience. A similar test could be constructed when allowing for larger multiplicities, resulting in a different distribution of $(G_{k,t})_{k,t}$ (see also Remark \ref{rem}). 
}
\end{remark}

In applications, the number of spiked eigenvalues $M=K$ is usually unknown and can be estimated from the data, e.g., using the methodology developed in \cite{li_et_al_2020, ke2023estimation}.
    Moreover, if the concrete values $\lambda_1(\bfSigma_0), \ldots, \lambda_K(\bfSigma_0)$ are not accessible for the user, we may extend our method to a test for the null hypothesis $\bfSigma_1 = \ldots = \bfSigma_n$ without specifying a prescribed matrix $\bfSigma_0$, which is the content of the following section.

\subsection{Estimating the spiked eigenvalues} \label{sec_unknown_spikes}

In this section, we demonstrate how to adapt our methodology to the case that the covariance matrix $\bfSigma_0$ is not prespecified by the user.  That is, we interested in the testing problem 
\begin{align}
    H_0: & \bfSigma_1 = \ldots = \bfSigma_n
    \textnormal{ vs. } \label{hypothesis2} \\ 
    H_1: & \exists t^\star \in (t_0,1], 1 \leq k \leq M:   \lambda_k (\bfSigma_1) = \ldots = \lambda_k(\bfSigma_{\lfloor nt^\star \rfloor } )  \neq \lambda_k(\bfSigma_{\lfloor nt^\star \rfloor + 1 } ) = \ldots = \lambda_k(\bfSigma_n). \nonumber 
\end{align}
For this purpose, we assume that we have access to an initial sample of i.i.d. random vectors $\bfy_1^{(0)}, \ldots, \bfy_n^{(0)}$, which are known to be generated under $H_0$, and independent of the actual sample $\bfy_1, \ldots, \bfy_n$ given in the previous section. Let $\lambda_{n,1,t}^{(0)} \geq \ldots \geq \lambda_{n,p,t}^{(0)} $ denote the sample eigenvalues corresponding to the initial sample $\bfy_1^{(0)}= (\xi_1^{(0)},  \eta_1^{(0)})^\top , \ldots, \bfy_n^{(0)}=(\xi_n^{(0)},  \eta_n^{(0)})^\top$. 

As an estimate for the population spiked eigenvalue $\alpha_k, ~ 1 \leq k \leq M,$ we use 
    \begin{align} \label{def_hat_alpha}
  \hat \alpha_{k} = \frac{ \lambda_{n,k,1}^{(0)} + 1 - y_n + \sqrt{(\lambda_{n,k,1}^{(0)} + 1 - y_n)^2 - 4 \lambda_{n,k,1}^{(0)}}}{2},
    \end{align}
    which solves the equation
    \begin{align*}
        \phi_{n,1}(\hat\alpha_k) = \lambda_{n,k,1}^{(0)}. 
    \end{align*}
    Using \eqref{def_phi}, it can be shown that $\hat\alpha_k$ is a consistent estimator for $\alpha_k$, that is, $\hat\alpha_k \conp \alpha.$
Note that our model operates under the assumption that $\cov(\eta) = \bfI$. In cases where the eigenvalues of the background noise are non-unit and unknown, we recommend utilizing the estimators suggested in \cite{passemier2017estimation, bai2012estimation}.
   
Then, we propose to use the test statistics 
 \begin{align}
        \hat M_n & = \sup_{t \in [t_0,1], 1 \leq k \leq M}  n   \lb \lambda_{n,k,t} - \lambda_{n,k,t}^{(0)} \rb ^2 ,
        \label{def_max_stat2} \\
        \hat S_n & = \sup_{t \in [t_0,1]} \sum_{k=1}^M
         n   \lb \lambda_{n,k,t} - \lambda_{n,k,t}^{(0)} \rb ^2.
    \end{align}  

Analogously to Theorem \ref{thm_test_statistics}, we have the following result on the asymptotic behaviour of $\hat M_n$ and $\hat S_n.$
    \begin{theorem} \label{thm_test_statistics2}
Let $t_0\in (0,1)$, $y_n - y = o(n^{-1/2})$ and assume that the eigenvalues of $\bfSigma_1, \ldots, \bfSigma_n$ are outside the interval $ [ 1 - \sqrt{y_{(t_0)}}, 1 + \sqrt{y_{(t_0)}} ] $.
    Under $H_0$ given in \eqref{hypothesis2}, we have for $n\to\infty$
    \begin{align*}
        \hat M_n & \cond \sup_{t\in[t_0,1], 1 \leq k \leq M} H_{k,t}^2, \\
        \hat S_n & \cond \sup_{t\in[t_0,1]} \sum_{k=1}^M H_{k,t}^2,
    \end{align*}
    where $(H_{k,t})_{t\in [t_0,1]}$ are centered Gaussian processes for $1 \leq k \leq M$. More precisely, we have
    \begin{align*}
        H_{k,t} =  \frac{ r_{kk}^{(k,t)} - r_{kk}^{(0,k,t)} }{1 + y m_{3,t}(\lambda_{k,t}) \alpha_k} 
        , \quad 1 \leq k \leq M, ~ t\in [t_0,1],
    \end{align*}
    where the covariance kernel of $r_{kk}^{(k,t)}$ is given in Theorem \ref{thm_joint_conv}, and $r_{kk}^{(0,k,t)}$ denotes an independent copy of $r_{kk}^{(k,t)}$.
    Under $H_1$, it holds that $\min ( \hat M_n, \hat S_n ) \conp \infty$.
\end{theorem}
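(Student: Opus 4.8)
The plan is to derive everything from Theorem~\ref{thm_joint_conv}, applied separately to the two independent samples, together with a continuous-mapping argument, in close analogy to the proof of Theorem~\ref{thm_test_statistics}. Under $H_0$ in \eqref{hypothesis2}, the actual sample $\bfy_1,\dots,\bfy_n$ and the initial sample $\bfy_1^{(0)},\dots,\bfy_n^{(0)}$ share a single common spiked covariance, say $\bfSigma_1$, whose $M$ spikes $\alpha_1,\dots,\alpha_M$ are simple and lie outside $[1-\sqrt{y_{(t_0)}},1+\sqrt{y_{(t_0)}}]$, so Theorem~\ref{thm_joint_conv} applies to each of the two samples with $\mathcal K=\{1,\dots,M\}$ and all multiplicities equal to one. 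Exactly as in the proof of Theorem~\ref{thm_test_statistics}, this yields
\begin{align*}
\left\{\sqrt n\lb\lambda_{n,k,t}-\lambda_{k,t}\rb\right\}_{1\le k\le M,\,t\in[t_0,1]}\rightsquigarrow\left\{G_{k,t}\right\}_{1\le k\le M,\,t\in[t_0,1]}
\end{align*}
in $(\ell^\infty([t_0,1]))^M$, with $G_{k,t}$ the centered Gaussian process of Theorem~\ref{thm_test_statistics}, and likewise $\{\sqrt n(\lambda_{n,k,t}^{(0)}-\lambda_{k,t})\}_{k,t}\rightsquigarrow\{G_{k,t}^{(0)}\}_{k,t}$, where $G^{(0)}$ is obtained by replacing $r_{kk}^{(k,t)}$ with the independent copy $r_{kk}^{(0,k,t)}$. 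The crucial structural point is that the deterministic centering $\lambda_{k,t}$ is common to both samples, so it cancels in $\lambda_{n,k,t}-\lambda_{n,k,t}^{(0)}$; this is precisely why the initial sample eliminates the dependence on the unknown spikes.

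Next I would promote these two marginal limits to a joint one. Since the two samples are independent, for each $n$ the pair $\bigl(\{\sqrt n(\lambda_{n,k,t}-\lambda_{k,t})\}_{k,t},\{\sqrt n(\lambda_{n,k,t}^{(0)}-\lambda_{k,t})\}_{k,t}\bigr)$ has product law, so all of its finite-dimensional distributions converge to those of $\bigl(\{G_{k,t}\}_{k,t},\{G_{k,t}^{(0)}\}_{k,t}\bigr)$ with the two blocks independent; combined with the asymptotic tightness of each component (part of the conclusion of Theorem~\ref{thm_joint_conv}) and the characterization of weak convergence in $\ell^\infty$, this gives joint weak convergence in $(\ell^\infty([t_0,1]))^{2M}$. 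Since
\begin{align*}
\hat M_n=\sup_{t\in[t_0,1],\,1\le k\le M}\Bigl(\sqrt n\lb\lambda_{n,k,t}-\lambda_{k,t}\rb-\sqrt n\lb\lambda_{n,k,t}^{(0)}-\lambda_{k,t}\rb\Bigr)^2
\end{align*}
and $\hat S_n$ is the same with $\sup_t\sum_{k=1}^M$ replacing $\sup_{t,k}$, and since the maps $(f,g)\mapsto\sup_{t,k}(f_{k,t}-g_{k,t})^2$ and $(f,g)\mapsto\sup_t\sum_k(f_{k,t}-g_{k,t})^2$ are continuous on $(\ell^\infty([t_0,1]))^{2M}$, the continuous mapping theorem (\cite[Theorem~1.3.6]{vandervaart1996}) yields the stated limits with $H_{k,t}=G_{k,t}-G_{k,t}^{(0)}=(r_{kk}^{(k,t)}-r_{kk}^{(0,k,t)})/(1+y\,m_{3,t}(\lambda_{k,t})\alpha_k)$, a centered Gaussian process as the difference of two independent copies.

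For the consistency claim under $H_1$, observe first that $\hat S_n\ge\hat M_n$ pointwise, because for each fixed $t$ the sum over $k$ of the nonnegative summands dominates their maximum; hence $\min(\hat M_n,\hat S_n)=\hat M_n$ and it suffices to prove $\hat M_n\conp\infty$. Under $H_1$ there are $k\in\{1,\dots,M\}$ and $t^\star\in(t_0,1]$ with $\lambda_k(\bfSigma_{\lfloor nt^\star\rfloor+1})=\cdots=\lambda_k(\bfSigma_n)\neq\lambda_k(\bfSigma_1)=\cdots=\lambda_k(\bfSigma_{\lfloor nt^\star\rfloor})$. The initial sample has constant covariance equal to the pre-change covariance $\bfSigma_1$, so $\lambda_{n,k,1}^{(0)}\conp\phi_1(\lambda_k(\bfSigma_1))$ by \eqref{def_phi}, whereas $\lambda_{n,k,1}$ is the eigenvalue of the full sample covariance whose underlying population spike changes at the interior time $t^\star$; the first-order analysis underlying \eqref{def_phi} shows that $\lambda_{n,k,1}$ converges in probability to a deterministic limit different from $\phi_1(\lambda_k(\bfSigma_1))$, because superimposing observations with a strictly different spike strictly shifts the location of the outlier. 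Consequently $\lambda_{n,k,1}-\lambda_{n,k,1}^{(0)}\conp c$ for some $c\neq0$, hence $n(\lambda_{n,k,1}-\lambda_{n,k,1}^{(0)})^2\conp\infty$, and therefore $\hat M_n\conp\infty$ since $\hat M_n\ge n(\lambda_{n,k,1}-\lambda_{n,k,1}^{(0)})^2$.

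I expect the $H_1$ step to be the main obstacle: one has to pin down the first-order limit of the sequential largest spiked eigenvalue when the population spike changes at an interior point of $[t_0,1]$ and verify that this limit is genuinely different from the pre-change value; a self-contained version of this at $t=1$ already suffices for consistency. Everything else reduces to the machinery of Theorem~\ref{thm_joint_conv}, independence, and continuous mapping; the only other mildly delicate point is the upgrade from two independent marginal weak limits to a joint limit in the non-separable space $\ell^\infty([t_0,1])$, which follows from the standard criterion via convergence of finite-dimensional distributions together with asymptotic tightness.
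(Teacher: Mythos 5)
Your proposal is correct and follows essentially the route the paper intends: the paper omits a detailed proof, stating only that the result holds ``analogously to Theorem \ref{thm_test_statistics}'', i.e.\ via Theorem \ref{thm_joint_conv} applied to each of the two independent samples, cancellation of the common centering $\lambda_{k,t}$, joint weak convergence by independence, and the continuous mapping theorem, which is exactly what you spell out. The one point you rightly flag as incomplete---the first-order location of the full-sample spiked eigenvalue when the population spike changes at an interior time, needed for consistency under $H_1$---is likewise left unaddressed in the paper, which gives only the remark that $\phi_t(\lambda_k(\bfSigma_0))=\phi_t(\lambda_k(\bfSigma_n))$ for all $t$ forces $\lambda_k(\bfSigma_0)=\lambda_k(\bfSigma_n)$.
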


\begin{remark} \label{remark_quantile}
{\rm 
We can construct a test for the problem \eqref{hypothesis} which holds asymptotically its level and is consistent,  by rejecting for large values of $\hat M_n$ and $\hat S_n$, respectively, similarly to Remark \ref{remark_test}. 
    To find the quantiles of the limiting processes in Theorem \ref{thm_test_statistics2} numerically, we propose to use the following quantities
    \begin{align*}
        \hat\Sigma_{ij} & = \frac{1}{n} \sum_{l=1}^n \xi_l^{(0)} (i) \xi_l^{(0)} (j), \\
        \hat \E [ \xi(i) \xi(j) \xi(m) \xi(l) ] & = \frac{1}{n} \sum_{l=1}^n \xi_l^{(0)} (i) \xi_l^{(0)} (j) \xi_l^{(0)} (m) \xi_l^{(0)} (l), \\
        m_{3,n,t}(\lambda ) 
 & = \int \frac{x}{(\lambda - x)^2} dF_{y_{\nt}} (x).
\end{align*}
 Here, $\xi_l^{(0)}(i)$ denotes the $i$th coordinate of $\xi_l^{(0)}$ for $1 \leq i \leq M, ~ 1 \leq l \leq n.$
Note that $\hat\Sigma_{ij}$, $\hat \E [ \xi(i) \xi(j) \xi(m) \xi(l) ]$, and $m_{3,n,t}(\lambda_{n,k,1}^{(0)})$ are consistent estimators of  $\Sigma_{ij}$, $\E [ \xi(i) \xi(j) \xi(m) \xi(l) ]$, and $m_{3,t}(\lambda_{k,t})$, respectively. Then, the quantiles of $\sup_{t\in[t_0,1], 1 \leq k \leq M} H_{k,t}^2$ and $\sup_{t\in[t_0,1]} \sum_{k=1}^M H_{k,t}^2$, respectively, can be approximated by using the quantiles of the corresponding Gaussian processes when replacing all the unknown quantities in the covariance kernel by the proposed estimators.
}
\end{remark}

\begin{remark} \label{remark_prelim_sample_size}
We would like to emphasize that the assumption on the initial sample size $N$ coinciding with the actual sample size $n$ is solely for convenience of exposition. Indeed, we may allow for a general size $N$ of the initial sample $\bfy_1^{(0)}, \ldots, \bfy_N^{(0)}$, which may not necessarily equal $n$, as long as $p/N \to y_0 \in (0,1). $ In this case, we get an estimator $\hat\alpha_{k,N}$ by solving
    \begin{align*}
        \phi_{N,1}(\hat \alpha_{k,N}) = \lambda_{N,k,1}^{(0)},
    \end{align*}
which leads similarly as in \eqref{def_hat_alpha} to 
\begin{align*}
  \hat \alpha_{k,N} = \frac{ \lambda_{N,k,1}^{(0)} + 1 - y_N + \sqrt{(\lambda_{N,k,1}^{(0)} + 1 - y_n)^2 - 4 \lambda_{N,k,1}^{(0)}}}{2},
~
\end{align*} 
and satisfies $\hat\alpha_{k,N} \conp \alpha_k$ as $N\to\infty$. 

\end{remark}
   
\subsection{Simulation study} \label{sec_sim}

	We conclude this section with a small simulation study illustrating the finite-sample properties of two tests in \eqref{test_max} and \eqref{test_sum} based on $M_n$ and $S_n$ in the presence of $M=3$ spiked eigenvalues $\alpha_1, \alpha_2, \alpha_3$. For this purpose, we generated $p$-dimensional centered normally distributed data with different covariance structures. 
		To be precise, we consider the  the alternatives 
    \begin{align}
        \bfSigma_1 & = \ldots = \bfSigma_{\lfloor nt^\star \rfloor} = \operatorname{diag}(\alpha_1, \alpha_2, \alpha_3),  ~ 
	    \bfSigma_{\lfloor nt^\star \rfloor+ 1}  = \ldots
	    = \bfSigma_n = \operatorname{diag}(\alpha_1 + \delta , \alpha_2 , \alpha_3),
        \label{alt1} \\ 
        \bfSigma_1 & = \ldots = \bfSigma_{\lfloor nt^\star \rfloor} = \operatorname{diag}(\alpha_1, \alpha_2, \alpha_3),  ~ 
	    \bfSigma_{\lfloor nt^\star \rfloor+ 1}  = \ldots
	    = \bfSigma_n = \operatorname{diag}(\alpha_1, \alpha_2 + \delta, \alpha_3),
        \label{alt2} \\
         \bfSigma_1 & = \ldots = \bfSigma_{\lfloor nt^\star \rfloor} = \operatorname{diag}(\alpha_1, \alpha_2, \alpha_3),  ~ 
	    \bfSigma_{\lfloor nt^\star \rfloor+ 1}  = \ldots
	    = \bfSigma_n = \operatorname{diag}(\alpha_1 + \delta, \alpha_2 + \delta, \alpha_3),
        \label{alt3}
    \end{align}
    where $\delta \geq 0$ determines the deviation from  the null hypothesis. Note that the choice $\delta = 0$ corresponds to the null hypothesis 
	\eqref{hypothesis}. Moreover, we set $\alpha_3 = 1+\sqrt{y_{(t_0)}} + 1, ~\alpha_2 = \alpha_3 +3, \alpha_1 = \alpha_2 +10.$ To begin with, we consider the case, where the spiked eigenvalues are known.

In Figures \ref{fig_alt1} and \ref{fig_alt2}, we present the observed rejection rates of the tests \eqref{test_max} and \eqref{test_sum} for various alternative scenarios and different combinations of values for $n$ and $p$. Throughout the analysis, we always set the value of $t_0$ as $0.1$.
Regarding the change point, we use the value $t^\star = 0.5(1-t_0) = 0.6$. All the reported findings are based on a total of $2000$ simulated runs. The horizontal gray line in each figure serves as a reference to the significance level $\alpha = 5\%$ under consideration. The x-axis captures the variable $\delta$, while the y-axis represents the corresponding empirical rejection rate.

In all cases under consideration, we observe a good approximation of the nominal level $\alpha = 5 \%$ under the null hypothesis ($\delta=0$). In a scenario where a change occurs in the first eigenvalue $\alpha_1$ (alternative \eqref{alt1} in Figure \ref{fig_alt1}), we observe very similar power curves for both tests \eqref{test_max} and \eqref{test_sum}. 
However, when a change takes place in the second eigenvalue (alternative \eqref{alt2} in Figure \ref{fig_alt2}), the test based on the sum-type statistic \eqref{test_sum} outperforms in detecting the signal $\delta > 0$. 
In simpler terms, the max-type statistic is less sensitive to alterations in the second-largest eigenvalue when compared to changes in the leading eigenvalue. This disparity between the performance of \eqref{test_max} and \eqref{test_sum} can be attributed to the larger fluctuations associated with the largest spiked eigenvalue $\alpha_1$, leading to the dominance of the term $n \left\{ \lambda_{n,1,t} - \phi_t(\lambda_1(\mathbf{\Sigma}_0)) \right\}^2$ in \eqref{def_max_stat}, while this effect is less pronounced for the sum-type statistic \eqref{def_sum_stat}.
By comparing Figures \ref{fig_alt1} and \ref{fig_alt2}, we can observe that detecting a change in the second spiked eigenvalue poses a greater challenge, as evidenced by the slower increase in power. 

Similar considerations can be conducted in the case where the spiked eigenvalues $\alpha_1, \alpha_2, \alpha_3$ are unknown and are estimated through $\hat\alpha_1, \hat\alpha_2, \hat\alpha_3$ based on an initial sample as described in Section \ref{sec_unknown_spikes}.  In Figure \ref{fig_unknown_spikes}, we present the empirical rejection rate under alternatives \eqref{alt1} (left panel) and \eqref{alt3} (right panel) using the same setup as above. Under the null, both tests tend to be slightly conservative. Comparing the left and right panels in Figure \ref{fig_unknown_spikes}, it is apparent that detecting the alternative \eqref{alt1} (change in the first eigenvalue) is more challenging for a given signal $\delta$ compared to detecting \eqref{alt3} (changes in the first two eigenvalues). These numerical results reflect the underlying difference with respect to the signal strength. For instance, we have for the  Frobenius norm $\|\bfSigma_1 - \bfSigma_n\|_F = \delta$ under \eqref{alt1}, whereas $\|\bfSigma_1 - \bfSigma_n\|_F = \sqrt{2}\delta$ under \eqref{alt3}.
Moreover, the two proposed tests perform very similarly for detecting the alternative \eqref{alt1}. However, it is noteworthy, that the test based on $\hat M_n$ outperforms the one involving $\hat S_n$. In this scenario, the the test based on $\hat M_n$  gains it superior power from the fact that its fluctuations are dominated by the largest spiked eigenvalues, while this effect is less pronounced for $\hat S_n$. 

 \begin{figure}[!ht]
    \centering
             \includegraphics[width=0.49\columnwidth, height=0.35\textheight, keepaspectratio]{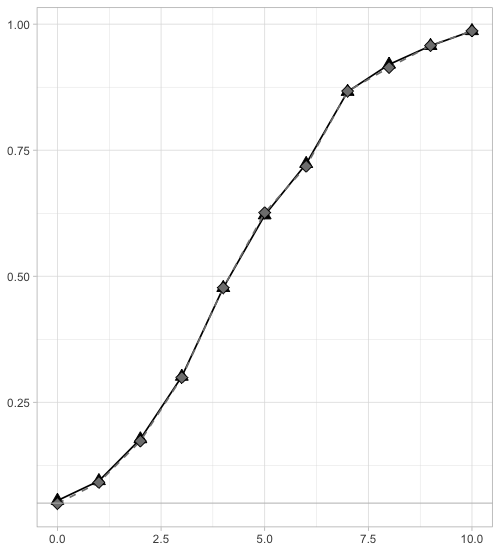}
             \includegraphics[width=0.49\columnwidth, height=0.35\textheight, keepaspectratio]{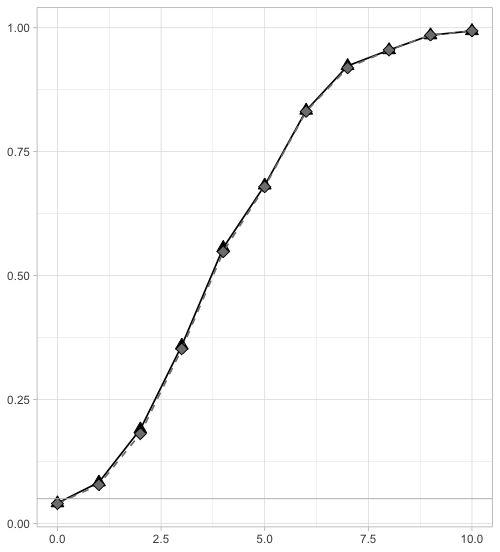}    
         \caption{   \it  Empirical rejection probabilities of the tests \eqref{test_max} (triangle and solid line) and \eqref{test_sum} (square and dotted line) under the null hypothesis ($\delta=0$) and the different alternatives given in \eqref{alt1}  for $\delta >0$ based on $2000$ simulation runs. For the left figure, we have $n=400, ~ p=200$ and for the right one, $n=500, p=300$.  }
    \label{fig_alt1}
         
    \end{figure}
     \begin{figure}[!ht]
    \centering
             \includegraphics[width=0.49\columnwidth, height=0.35\textheight, keepaspectratio]{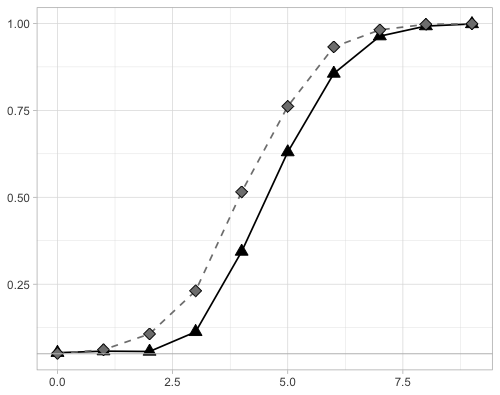}
             \includegraphics[width=0.49\columnwidth, height=0.35\textheight, keepaspectratio]{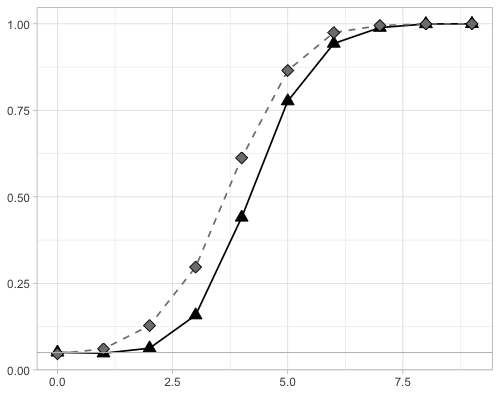}    
         \caption{   \it  Empirical rejection probabilities of the tests \eqref{test_max} (triangle and solid line) and \eqref{test_sum} (square and dotted line) under the null hypothesis ($\delta=0$) and the different alternatives given in \eqref{alt2}  for $\delta >0$ based on $2000$ simulation runs. For the left figure, we have $n=400, ~ p=200$ and for the right one, $n=500, p=300$.  }
    \label{fig_alt2}
         
    \end{figure}

   \begin{figure}[!ht]
    \centering
             \includegraphics[width=0.49\columnwidth, height=0.35\textheight, keepaspectratio]{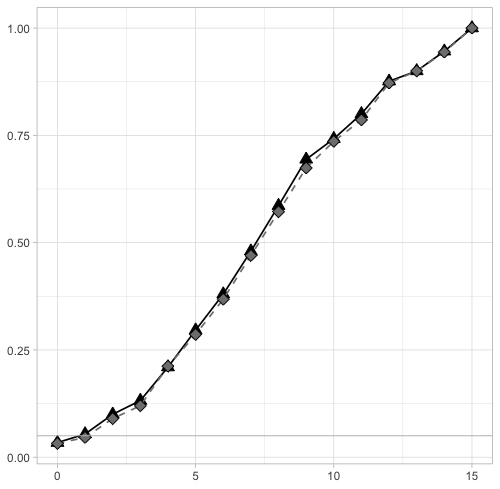}
             \includegraphics[width=0.49\columnwidth, height=0.35\textheight, keepaspectratio]{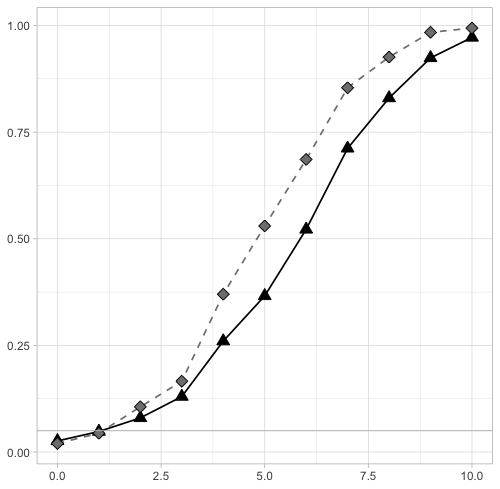}    
         \caption{  \it  Empirical rejection probabilities of the tests \eqref{test_max} (triangle and solid line) and \eqref{test_sum} (square and dotted line) in a scenario, where the spiked eigenvalues are estimated from an initial sample.  We consider the null hypothesis ($\delta=0$) and the different alternatives given in \eqref{alt1} (left panel) and \eqref{alt3} (right panel)  for $\delta >0$ based on $500$ simulation runs and set $n=400, ~ p=200$.  }
    \label{fig_unknown_spikes}
         
    \end{figure}
  
\newpage
    
\section{Proofs of Main Theorems}\label{sec_proof_main}

\subsection{Outline for the proof of Theorem \ref{thm_joint_conv}} \label{sec_proof_outline}

To prove weak convergence of the process  $\left\{ \sqrt{n} \lb \lambda_{n,j,t} - \lambda_{k,t} \rb
        \right\}_{j\in J_k, t\in [t_0,1]} $
of sequential sample spiked eigenvalues, we establish an immediate connection with a process involving certain matrices $\mathbf{R}_{n,t}(\lambda_{k,t})$ for $1 \leq k \leq K$, $n\in\mathbb{N}$, and $t\in [t_0,1]$.
That is, in the first step, we will show that the convergence of this process guarantees the convergence of the original process of sequential sample spiked eigenvalues. While the general idea of this step is similar to the previous works \cite{bai2008limit, qinwen2014joint, zhang2022asymptotic}, we emphasize that our sequential model requires the more challenging analysis of a process in an infinite-dimensional function space.  

From the outset, the choice of a suitable function space for $\left\{ \sqrt{n} \lb \lambda_{n,j,t} - \lambda_{k,t} \rb
        \right\}_{j\in J_k, t\in [t_0,1]} $ requires a nuanced decision. In the space of bounded functions $\ell^\infty$ equipped with the uniform norm, even simple stochastic processes can be non-measurable with respect to the topology induced by this norm. A classical approach is to restrict the underlying space further and opt for a coarser topology, for example by considering the Skorokhod space equipped with the Skorokhod metric (see \cite[Chapter 3]{billingsley1999}). However, it is well known that the uniform norm, as a mapping from the Skorokhod space equipped with the Skorokhod metric to $\R$, is not continuous, and thus, the convergence of the supremum of a process cannot be implied by the continuous mapping theorem. This poses a problematic issue, especially considering that many statistical applications, including our Section \ref{sec_test}, rely on the asymptotic behavior of the supremum of certain weakly convergent processes. Therefore, we turn to the modern theory of stochastic processes as outlined in \cite{vandervaart1996}, which circumvents the measurability problem through the notion of outer integrals. 

To investigate $(\bfR_{n,t}(\lambda_{k,t}))_{t\in [t_0,1], n\in\N}$ in $\ell^\infty$, we derive a general functional central limit theorem for sesquilinear forms (Theorem \ref{thm_functional_clt}), which may be of independent interest. When applying this result to $(\bfR_{n,t}(\lambda_{k,t}))_{t\in [t_0,1], n\in\N}$, the main challenges we encounter for the asymptotic analysis are twofold:
\begin{enumerate}
\item 
Calculating the covariance structure poses inherent difficulties compared to the finite-dimensional case ($t=1$) studied in previous works (e.g., \cite{bai2008limit, qinwen2014joint}). Specifically, the sequential framework considered in this work requires finding the limit of terms such as $(1/p) \tr ( \bfS_{2,s} \D_t (\lambda_{k,t} ) )$ or $(1/p) \tr (\bfS_{2,s} \D_s (\lambda_{k,s}) \bfS_{2,s} \D_t (\lambda_{k,t}))$ (see Lemma \ref{lem_tr_S_D} and Lemma \ref{lem_tr_S_D_S_D}). Here, $\D_s(\lambda_{k,s})= (\bfS_{2,s} - \lambda_{k,s} \bfI)\inv$ denotes the resolvent of $\bfS_{2,s}$. The case $s\neq t$ is particularly complex since, unlike the $s=t$ scenario, the aforementioned traces cannot be expressed as integrals with respect to the limiting spectral distribution of $\bfS_{2,s}$ or $\bfS_{2,t}$. Instead, more advanced tools are necessary to identify the limits, such as a resolvent decomposition as outlined in \cite{dornemann2021linear}.

\item 
In the presence of a sequential framework modelled in an infinite-dimensional function space, an additional challenge presents itself, which does not arise in the case $t=1$.
To verify the asymptotic tightness, appropriate concentration inequalities for the increments are needed (see proof of Theorem \ref{thm_R_tight} in Section \ref{sec_proof_R_tight}). Contrary to convergence of the finite-dimensional distributions of $(\bfR_{n,t}(\lambda_{k,t}))_{t\in [t_0,1], n\in\N}$, 
all estimates must hold uniformly over $t$, which necessitates
a finer level of control of the fluctuations. We address this issue by conditioning on certain events with probability approaching $1$, and establishing the required estimates on these events. 
\end{enumerate}

\subsection{Proof of Theorem \ref{thm_joint_conv} }

In order to simplify the presentation of this paper, we restrict our proofs to the case that $|\mathcal{K}|=1$. However, we emphasize that this simplification is solely for technical convenience, and the tools used for the proof in case $|\mathcal{K}|=1$ can be easily generalized for more sophisticated choices of $\mathcal{K}.$
Thus, we concentrate on the proof of the following result. 
\begin{theorem}\label{thm_main}
    Let $t_0\in (0,1)$ and $1 \leq k \leq K.$
    For every spiked eigenvalue $\alpha_k \notin [ 1 - \sqrt{y_{(t_0)}}, 1 + \sqrt{y_{(t_0)}} ] $,
    we have 
    \begin{align*}
        \left\{ \sqrt{n} \lb \lambda_{n,j,t} - \lambda_{k,t} \rb
        \right\}_{j\in J_k, t\in [t_0,1]} \rightsquigarrow
        \left\{ \lambda_j \lb \frac{1}{1 + y m_{3,t}(\lambda_{k,t}) \alpha_k} \tilde{\bfR}_{t}^{(kk)} (\lambda_{k,t}) \rb \right\}_{j\in J_k, t\in [t_0,1]}  
    \end{align*}
    in $(\ell^\infty([t_0,1]))^{n_k}$, where the $n_k \times n_k$ matrix $\tilde{\bfR}_{t}^{(kk)} (\lambda)$ is defined in Theorem \ref{thm_joint_conv}.
\end{theorem}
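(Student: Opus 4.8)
The argument proceeds in three conceptual stages. \emph{Stage one: reduction to a finite-dimensional random matrix process.} Following the strategy of \cite{bai2008limit, qinwen2014joint, zhang2022asymptotic}, I would first relate, for each fixed $t$, the spiked eigenvalue $\lambda_{n,j,t}$ of $\bfS_{n,t}$ to the eigenvalues of a fixed-size $M\times M$ (indeed $n_k\times n_k$) random matrix built out of resolvents of the ``noise'' block $\bfS_{2,t}$. Concretely, $\lambda$ is an eigenvalue of $\bfS_{n,t}$ outside the support of the limiting bulk precisely when a certain determinant involving $\bfX_{1,t}\bfX_{1,t}^\top$, $\bfX_{1,t}\bfX_{2,t}^\top \D_t(\lambda) \bfX_{2,t}\bfX_{1,t}^\top$ and $\bfSigma$ vanishes; a first-order expansion of this determinantal equation around $\lambda = \lambda_{k,t}$, after centering and scaling by $\sqrt n$, shows that $\sqrt n(\lambda_{n,j,t}-\lambda_{k,t})$ is, up to $\op$ \emph{uniformly in} $t\in[t_0,1]$, the $j$-th eigenvalue of $\frac{1}{1+y m_{3,t}(\lambda_{k,t})\alpha_k}\tilde{\bfR}_{t}^{(kk)}(\lambda_{k,t})$, where $\bfR_{n,t}(\lambda_{k,t})$ is the appropriately centered and normalized sesquilinear-form matrix whose $(i,j)$ entry is (a deterministic multiple away from) $\sqrt n$ times the fluctuation of the relevant quadratic forms in $\xi$ and $\eta$. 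The normalizing scalar $1+y m_{3,t}(\lambda_{k,t})\alpha_k$ is exactly the derivative of $\phi_t$-type function appearing in the implicit-function step. The key point beyond the existing literature is that this reduction must be shown to hold \emph{as an identity of processes in} $\ell^\infty([t_0,1])$, i.e.\ with an error term that is $\op$ in the uniform norm; this is handled by establishing deterministic-equivalent bounds on the resolvent $\D_t(\lambda)$ and its derivatives that are uniform over $t\in[t_0,1]$ and over $\lambda$ in a neighbourhood of $\{\lambda_{k,t}\}$, using that $\lambda_{k,t}$ stays bounded away from $I_t$ for $t\ge t_0$.

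\textbf{Stage two: weak convergence of the matrix-valued process $(\bfR_{n,t}(\lambda_{k,t}))_{t\in[t_0,1]}$.} This is where the functional CLT for sesquilinear forms, Theorem \ref{thm_functional_clt}, enters. Writing each entry of $\bfR_{n,t}(\lambda_{k,t})$ as a sesquilinear form $\xi(i)^\top \bfB_{n,t}\,\xi(j)$-type expression plus noise contributions involving $\eta$, I would apply Theorem \ref{thm_functional_clt} to obtain joint weak convergence of all entries, over all $t\in[t_0,1]$, to a Gaussian process in $\ell^\infty$. Convergence of finite-dimensional distributions follows from the corresponding multivariate CLT for quadratic/bilinear forms (the $t=1$ analogue is in \cite{bai2008limit, qinwen2014joint}), once the limiting covariances are identified; asymptotic tightness is supplied by the tightness half of Theorem \ref{thm_functional_clt} (equivalently Theorem \ref{thm_R_tight}), which rests on moment bounds for increments $\bfR_{n,t}(\lambda_{k,t})-\bfR_{n,s}(\lambda_{k,s})$ that are uniform in $s,t$ — obtained, as the outline indicates, by conditioning on high-probability events controlling the spectral norms of the relevant submatrices and resolvents, then applying Rosenthal/Burkholder-type inequalities on those events. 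The limiting covariance kernel $\cov(r^{(k',s)}_{ij}, r^{(k,t)}_{ml})$ is exactly what Theorem \ref{thm_joint_conv} asserts; computing it reduces to identifying the limits of $(1/n)\tr(\bfB_{n,t}\bfB_{n,s})$ and $(1/n)\tr(\bfB_{n,t}\circ\bfB_{n,s})$, which are encoded in the quantities $\omega_{s,t,k',k}$ and $\theta_{s,t,k',k}$.

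\textbf{Stage three: from the process to the eigenvalue process via continuous mapping.} Once $(\bfR_{n,t}(\lambda_{k,t}))_{t\in[t_0,1]} \rightsquigarrow (\tilde{\bfR}_t(\lambda_{k,t}))_{t\in[t_0,1]}$ in $(\ell^\infty([t_0,1]))^{n_k^2}$, the map sending a continuous path of symmetric $n_k\times n_k$ matrices to its path of ordered eigenvalues, scaled by the (continuous, nonvanishing on $[t_0,1]$) factor $1/(1+y m_{3,t}(\lambda_{k,t})\alpha_k)$, is continuous from $(\ell^\infty([t_0,1]))^{n_k^2}$ to $(\ell^\infty([t_0,1]))^{n_k}$ by Weyl's inequality; the continuous mapping theorem for weak convergence in the sense of \cite{vandervaart1996} then yields the claimed limit. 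Combined with the uniform $\op$ approximation from Stage one and Slutsky's lemma (the $\ell^\infty$ version), this gives Theorem \ref{thm_main}.

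\textbf{Main obstacle.} I expect the principal difficulty to be the uniform-in-$(s,t)$ identification and control of the cross-covariance terms with $s\ne t$. As the outline stresses, for $s\ne t$ the traces $(1/p)\tr(\bfS_{2,s}\D_t(\lambda_{k,t}))$ and $(1/p)\tr(\bfS_{2,s}\D_s(\lambda_{k,s})\bfS_{2,s}\D_t(\lambda_{k,t}))$ cannot be written as integrals against a single limiting spectral distribution, because $\bfS_{2,s}$ and $\bfS_{2,t}$ are built from nested, hence strongly dependent, samples; extracting their limits requires the resolvent-decomposition machinery of \cite{dornemann2021linear} carried out with enough uniformity in $(s,t)\in[t_0,1]^2$ to feed the tightness argument. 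Intertwined with this is the second obstacle flagged in the outline: upgrading all the finite-dimensional estimates to hold uniformly over $t$, so that the reduction in Stage one and the tightness in Stage two go through in $\ell^\infty$ rather than merely pointwise.
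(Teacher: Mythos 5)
Your proposal is correct and follows essentially the same route as the paper: reduction of the eigenvalue fluctuations to the sesquilinear-form matrix process $\bfR_{n,t}(\lambda_{k,t})$ via a uniform first-order expansion of the determinantal equation, weak convergence of that process in $\ell^\infty$ through the functional CLT (finite-dimensional distributions plus increment-moment tightness), and the cross-covariance computation for $s\neq t$ as the main technical obstacle. The only cosmetic difference is in the final transfer step, where the paper invokes the almost sure representation theorem and then follows the Bai--Yao determinantal argument pointwise, while you propose continuous mapping via Weyl's inequality plus Slutsky; both devices accomplish the same thing.
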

We define for $t\in [0,1], \lambda \in \R^+ \setminus I_{t} $ with $I_t= [a_{y_{(t)}}, b_{y_{(t)}}]$, the matrices  
\begin{align}
    \mathbf{A}_{n,t} & =  \mathbf{A}_{n,t} (\lambda) = \bfX_{2,t}^\top \lb \lambda \bfI - \bfX_{2,t} \bfX_{2,t}^\top \rb\inv \bfX_{2,t},\nonumber \\ 
    \bfR_{n,t} & = \bfR_{n,t} (\lambda) = \frac{1}{\sqrt{n}} \left\{ 
    \xi_{1:\nt} \lb \bfI + \bfA_{n,t} \rb \xi_{1:\nt}^\top 
    - \bfSigma \tr \lb \bfI + \bfA_{n,t} \rb
    \right\} .   \label{def_R}
\end{align}
The key result for proving Theorem \ref{thm_main} lies in investigating the asymptotic behavior of the process of matrices $\bfR_{n,t}(\lambda_{k,t})$ and is provided in the following theorem. 
\begin{theorem} \label{thm_R_conv}
Let $t_0\in (0,1)$ and $k \in \{ 1, \ldots, M_b\} \cup \{ M - M_a + 1, \ldots, M\} $. Then, it holds
\begin{align*}
     (\bfR_{n,t} ( \lambda_{k,t} ) )_{t\in [t_0,1]} \rightsquigarrow (\bfR_t (\lambda_{k,t}) )_{t\in [t_0,1]} , 
 \end{align*} 
 the space $( \ell^\infty ( [t_0,1] )^{n_k^2}$, where $(\bfR_t (\lambda_{k,t}) )_{t\in [t_0,1]}$ is a centered Gaussian process.  For each $t\in [t_0,1],$ the random $n_k \times n_k$ matrix $ \bfR_t (\lambda_{k,t}) =(r_{ij;k}^{(t)})_{1 \leq i,j\leq n_k}$ is symmetric and has centered Gaussian entries with covariance function
\begin{align*}
    \cov \lb r_{ij;k}^{(s)}, r_{k'l;k}^{(t)} \rb 
    & = \omega_{s,t} \lb \E [ \xi(i) \xi(j) \xi(k') \xi(l) ]
    - \Sigma_{ij} \Sigma_{k'l} \rb 
    \\ & + (\theta_{s,t} - \omega_{s,t} ) \lb 
    \E [ \eta(i) \eta(k')] \E [ \eta (j) \eta (l)] 
    + \E [ \eta(i) \eta(l)] \E [ \eta (j) \eta (k')] 
    \rb 
\end{align*}
for $1 \leq i,j,k',l \leq n_k$, $s,t\in [t_0,1].$ Here, we define $\omega_{s,t} = \omega_{s,t,k,k}$
and $\theta_{s,t}= \theta_{s,t,k,k}.$
\end{theorem}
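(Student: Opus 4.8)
The plan is to establish Theorem \ref{thm_R_conv} by the standard two-part recipe for weak convergence in $\ell^\infty([t_0,1])$: (i) convergence of the finite-dimensional distributions of the matrix-valued process $(\bfR_{n,t}(\lambda_{k,t}))_t$, and (ii) asymptotic tightness. For the finite-dimensional distributions, fix $t_1,\ldots,t_r \in [t_0,1]$ and consider the $\mathbb{R}^{r n_k^2}$-valued vector obtained by stacking the entries of $\bfR_{n,t_1}(\lambda_{k,t_1}),\ldots,\bfR_{n,t_r}(\lambda_{k,t_r})$. Each entry is a quadratic (sesquilinear) form $\tfrac{1}{\sqrt n}\{\xi_{1:\lfloor nt\rfloor}(i)^\top(\bfI+\bfA_{n,t})\xi_{1:\lfloor nt\rfloor}(j)-\Sigma_{ij}\tr(\bfI+\bfA_{n,t})\}$ in the i.i.d.\ vectors $\xi_1,\ldots,\xi_n$, with the matrix $\bfI+\bfA_{n,t}$ being a function of the independent block $\bfX_{2,t}$ alone. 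I would therefore invoke the general functional CLT for sesquilinear forms (Theorem \ref{thm_functional_clt}) conditionally on the $\eta$'s, which reduces the problem to: (a) verifying the Lindeberg-type/moment conditions under the stated fourth-moment assumption, and (b) identifying the limiting covariances. The centering by $\Sigma_{ij}\tr(\bfI+\bfA_{n,t})$ is exactly what makes each coordinate asymptotically mean-zero, and the Gaussian limit follows because, after conditioning, these are sums of (approximately) independent contributions from the $n$ sample indices.

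The core computational step is the identification of the covariance kernel, i.e.\ showing that $\cov(r_{ij;k}^{(s)},r_{k'l;k}^{(t)})$ has the claimed form with coefficients $\omega_{s,t}=\omega_{s,t,k,k}$ and $\theta_{s,t}=\theta_{s,t,k,k}$. Expanding the product of two sesquilinear forms and using independence of $\xi$ and $\eta$ along with Isserlis-type bookkeeping for the fourth moments of $\xi$, one finds that the limit splits into a ``$\xi$-fourth-cumulant'' piece multiplied by $\lim (1/n)\tr(\bfB_{n,t}\bfB_{n,s})$ and a ``Gaussian'' piece multiplied by $\lim (1/n)\tr(\bfB_{n,t}\circ\bfB_{n,s})$, where $\bfB_{n,t}$ is the relevant resolvent-type matrix $\bfI+\bfA_{n,t}$ (up to normalization); this is precisely the structure flagged in Remark \ref{rem}. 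The quantities $\omega_{s,t}$ and $\theta_{s,t}$ are then obtained as the almost-sure limits of $(1/n)\tr(\bfB_{n,t}\bfB_{n,s})$ and $(1/n)\tr(\bfB_{n,t}\circ\bfB_{n,s})$ respectively. For $s=t$ these reduce to integrals against $F_{y_{(t)}}$, but for $s\neq t$ the overlapping-sample structure (the first $\lfloor ns\rfloor$ vectors are common to both $\bfS_{2,s}$ and $\bfS_{2,t}$) obstructs such a representation, and I would instead compute them via a resolvent decomposition splitting $\bfX_{2,t}$ into its first $\lfloor ns\rfloor$ columns and the remaining ones, following the approach of \cite{dornemann2021linear}, together with the Marchenko--Pastur identities \eqref{eq_stieltjes_MP}--\eqref{eq_m_mu} and the formula for $m_{3,t}(\lambda_{k,t})$ from Lemma \ref{lem_formula_m_lambda}. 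This yields the explicit expressions $\tau_{s,t,k,k}$, $\psi_{s,t,k,k}$, $\kappa_{s,t,k,k}$, $\zeta_{s,t,k,k}$; matching constants is tedious but mechanical once the relevant mixed-trace limits (Lemma \ref{lem_tr_S_D}, Lemma \ref{lem_tr_S_D_S_D}) are in hand.

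For tightness I would reduce, via a standard criterion for processes in $\ell^\infty$ over a compact interval (e.g.\ \cite{vandervaart1996}), to a moment bound on increments of the form $\E\,\|\bfR_{n,t}(\lambda_{k,t})-\bfR_{n,s}(\lambda_{k,s})\|^{2}\lesssim |t-s|$ (or a higher even power with exponent $>1$), uniformly in $n$, plus a negligible-jump control coming from the floor function $\lfloor nt\rfloor$. Entrywise this amounts to bounding the variance of increments of the sesquilinear forms; the subtlety, as emphasized in the proof outline, is that these bounds must be uniform over $t\in[t_0,1]$, whereas the naive resolvent estimates degrade near the edge of the spectrum. I would handle this by restricting to a high-probability event on which the extreme eigenvalues of all the matrices $\bfX_{2,t}\bfX_{2,t}^\top$, $t\in[t_0,1]$, stay bounded away from $\lambda_{k,t}$ (this uses $\alpha_k\notin[1-\sqrt{y_{(t_0)}},1+\sqrt{y_{(t_0)}}]$ and a uniform-in-$t$ largest/smallest eigenvalue bound), and then establishing the increment moment inequality on that event using the resolvent identity $\D_t-\D_s = \D_t(\bfS_{2,s}-\bfS_{2,t})\D_s$ to linearize the $t$-dependence. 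The main obstacle I anticipate is exactly this uniform-in-$t$ control of the resolvent increments for the tightness step, closely followed by the bookkeeping needed to pin down the mixed-sample trace limits $\omega_{s,t}$ and $\theta_{s,t}$ for $s\neq t$; both are where the sequential/infinite-dimensional nature of the problem makes it genuinely harder than the $t=1$ case treated in \cite{bai2008limit,qinwen2014joint}.
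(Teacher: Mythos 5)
Your proposal follows essentially the same route as the paper: finite-dimensional distributions via the functional CLT for sesquilinear forms (Theorem \ref{thm_functional_clt}), with the mixed-trace limits $\omega_{s,t}$, $\theta_{s,t}$ computed through the resolvent decomposition of \cite{dornemann2021linear} (Lemmas \ref{lem_tr_S_D}, \ref{lem_tr_S_D_S_D}, \ref{lem_conv_tr_DsDt}), and tightness via increment moment bounds established on a high-probability spectral event after truncation. Two cosmetic caveats: the increment bound must have exponent strictly greater than $1$ in $(\nt-\ns)/n$ (a plain $|t-s|$ bound, which you mention first before hedging, would not suffice; the paper uses the split condition \eqref{mom_cond_tightness2} with $b'=2$, $b=4$), and per \eqref{omega_limit}--\eqref{theta_limit} it is $\omega_{s,t}$ that arises as the Hadamard (diagonal) trace limit and $\theta_{s,t}$ as the ordinary trace limit, the reverse of your labeling.
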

Proof of Theorem \ref{thm_R_conv} is postponed to Section \ref{sec_proof_thm_R_conv}.

The weak convergence of $(\bfR_{n,t} ( \lambda_{k,t} ) )_{t\in [t_0,1]}$ sets us in the position to prove our main result. 
\begin{proof} [Proof of Theorem \ref{thm_main}] 
To begin with, we argue that we may assume without loss of generality that the events
\begin{align*}
    \Omega_{n,t} = \{ \lambda_{n,j,t} \notin I_{t_0} \} \cap 
    \{ \lambda_1 (\bfS_{2,t}) , \ldots, \lambda_p (\bfS_{2,t}) \in I_{t_0} \}
\end{align*}
hold true for all $n \in \N, t\in [t_0,1]$ almost surely (with a null set not depending on $t$). To see this, define
\begin{align} \label{eq_support_lambda}
    \Omega_n = 
    \begin{cases}
     \{ \lambda_{n,j,t_0} > b_{y_{(t_0)} } \} 
    \cap \{ \lambda_1 ( \bfS_{2,1} ), \lambda_p(\bfS_{2,t_0} )  \in I_{t_0} \}
    \quad & \textnormal{ if } \alpha_k > 1 + \sqrt{y_{(t_0)}} , \\ 
    \{ \lambda_{n,j,1} < a_{y_{(t_0)} } \} 
    \cap \{ \lambda_1 ( \bfS_{2,1} ), \lambda_p(\bfS_{2,t_0} )  \in I_{t_0} \}
    \quad & \textnormal{ if } \alpha_k < 1 - \sqrt{y_{(t_0)}}.
    \end{cases} 
\end{align}
Note that $\Omega_n \subset \Omega_{n,t}$ for all $n\in\N, t\in [t_0,1]$ and $\PR (\Omega_n) \to 1.$   
Thus, considering the defintion of weak convergence in \cite[Definition 1.3.3]{vandervaart1996}, we may work without loss of generality under the assumption that $\Omega_{n,t}$ holds true for all $n\in\N, t\in[t_0,1]$ almost surely (with a nullset not depending on $t$).

Next, truncation of the random variables $x_{ij}$ under a finite fourth moment condition is a standard technical tool, see e.g. \cite{silverstein1989weak, bai2008limit} for the extreme eigenvalues of a sample covariance matrix. Thus, we may assume,
without loss of generality, that the random variables $x_{ij}$ are truncated at $\delta_n \sqrt{n}$, that is, $|x_{ij}|<\sqrt{n} \delta_n$, where $\delta_n\to0$ arbitrary slowly. This enables us to obtain appropriate concentration bounds on various terms appearing in the analysis. 

Moreover, we define
 \begin{align*}
     \hat \delta_{n,j,t}
     & = \sqrt{n} \lb \lambda_{n,j,t} - \lambda_{k,t} \rb, \\
     \bfK_{n,t}(\lambda) & = \bfS_{11,t} + \bfS_{12,t} \D_t \bfS_{21,t}, \\
     m_{1,t} (\lambda) & = \int \frac{x}{\lambda -x} d F_{y_{(t)}}(x).
 \end{align*}
 Note that 
 \begin{align}  
     \bfK_{n,t}( \lambda_{k,t}) & = \bfS_{1,t} + \bfX_{1,t} \bfA_{n,t}(\lambda_{k,t}) \bfX_{1,t}^\top 
     = \frac{1}{n} \xi_{1: \nt} ( \bfI + \bfA_{n,t}(\lambda_{k,t}) ) \xi_{1:\nt}^\top \nonumber \\
     & =  \frac{1}{\sqrt{n}} \bfR_{n,t}(\lambda_{k,t}) + \frac{1}{n} \bfSigma \tr \lb \bfI + \bfA_{n,t}(\lambda_{k,t}) \rb 
    \nonumber \\ &  = \frac{1}{\sqrt{n}} \bfR_{n,t}(\lambda_{k,t}) 
     + \lb  t + y m_{1,t} (\lambda_{k,t}) \rb \bfSigma + o_{\PR} \lb n^{-1/2} \rb \label{a4}
 \end{align}
 uniformly over $t\in [t_0,1]$. Here, we used the fact that uniformly for $t\in [t_0,1]$
\begin{align*}
    \frac{1}{n} \tr \lb \bfI + \bfA_{n,t} (\lambda_{k,t}) \rb 
    =   \frac{1}{n} \tr \lb \bfI_{\nt} + \bfA_{n,t} (\lambda_{k,t}) \rb 
    = t + y m_{1,t} (\lambda_{k,t}) + o_{\PR} \lb n^{-1/2} \rb,
\end{align*}
which follows from Lemma \ref{lem_tr_S_D_unif}. 
Note that 
\begin{align} \label{a5}
    \lambda_{n,j,t} \bfI - \bfK_{n,t} (\lambda_{n,j,t})
    & = 
    \lambda_{k,t} +  \frac{1}{\sqrt{n}} \hat\delta_{n,j,t} \bfI
    - \bfK_{n,t} (\lambda_{k,t}) 
    - \lb \bfK_{n,t} (\lambda_{k,t}) - \bfK_{n,t} (\lambda_{n,j,t}) \rb.
\end{align}
Furthermore, we have 
\begin{align}
    & \bfK_{n,t} (\lambda_{k,t}) - \bfK_{n,t} (\lambda_{n,j,t}) \nonumber \\
    & = \frac{1}{n} \xi_{1:\nt} \bfX_{2,t}^\top \left\{ 
    \lb \left[ \lambda_{k,t} + \frac{1}{\sqrt{n}} \hat\delta_{n,j,t} \right] \bfI - \bfS_{2,t} \rb \inv
    - \lb \lambda_{k,t} \bfI - \bfS_{2,t} \rb \inv 
    \right\} \bfX_{2,t} \xi_{1:\nt}^\top \nonumber \\
    & = - \frac{1}{\sqrt{n}} \hat\delta_{n,j,t} \frac{1}{n} \xi_{1:\nt} \bfX_{2,t}^\top  
    \lb \left[ \lambda_{k,t} + \frac{1}{\sqrt{n}} \hat\delta_{n,j,t} \right] \bfI - \bfS_{2,t} \rb \inv
     \lb \lambda_{k,t} \bfI - \bfS_{2,t} \rb \inv 
    \bfX_{2,t} \xi_{1:\nt}^\top \nonumber \\
     & = - \frac{1}{\sqrt{n}} \hat\delta_{n,j,t} 
     \lb y m_{3,t}(\lambda_{k,t}) \bfSigma + \op \rb 
     =  - \frac{1}{\sqrt{n}} \hat\delta_{n,j,t} 
      y m_{3,t}(\lambda_{k,t}) \bfSigma
      + o_{\PR}\lb n^{-1/2} \rb .  \label{a6}
\end{align}
Here, we used \eqref{eq_bound_D}, \cite[Lemma B.26]{bai2004}, and the $o_{\PR}$-terms do not depend on $t\in [t_0,1]$.
Combining \eqref{a4}, \eqref{a5} and \eqref{a6}, we have 
\begin{align}
   & \lambda_{n,j,t} \bfI - \bfK_{n,t} (\lambda_{n,j,t}) 
    \nonumber \\ & = \lambda_{k,t} \bfI 
    - \frac{1}{\sqrt{n}} \bfR_{n,t}(\lambda_{k,t}) 
     - \lb  t + y m_{1,t} (\lambda_{k,t}) \rb \bfSigma
     + \frac{1}{\sqrt{n}} \hat\delta_{n,j,t} \lb \bfI + 
      y m_{3,t}(\lambda_{k,t}) \bfSigma
      \rb + o_{\PR}\lb n^{-1/2} \rb.
      \label{a7}
\end{align}

Next, we apply a general almost sure representation theorem for the convergence $(\bfR_{n,t}) \rightsquigarrow (\bfR_t)_t$ given in Theorem \ref{thm_R_tight} and  \eqref{a7} (see \cite[Theorem 1.10.4]{vandervaart1996}) and we get almost sure represenations for these random variables in the sense of \cite{vandervaart1996}. As a consequence of \cite[Addendum 1.10.5]{vandervaart1996}, the almost sure representation of $(\bfR_t)_{t\in [t_0,1]}$ is measurable in $(\ell^\infty([t_0,1]))^M$ so that its distribution coincides with the distribution of the original process $(\bfR_t)_{t\in [t_0,1]}$ in the usual sense. 
For the sake of simplicity, we use the same notation for the almost sure representations as for the original random variables. Thus, we may assume that the convergence $(\bfR_{n,t}) \rightsquigarrow (\bfR_t)_t$ and \eqref{a7}
holds true almost surely (with a nullset not depending on $t$). 
Using these two convergence results and following the steps of \cite[Section 6.4]{bai2008central} closely, we see that $\hat\delta_{n,j,t}$ tends almost surely to a solution of
\begin{align*}
    \left| - \tilde{\bfR}_{t}^{(kk)} (\lambda_{k,t}) + 
    \lambda \lb 1 + y m_{3,t}(\lambda_{k,t}) \alpha_{k} \rb \bfI_{n_k}
    \right| = 0
\end{align*}
in $\lambda$, that is, an eigenvalue of
$$
\frac{1}{1 + y m_{3,t}(\lambda_{k,t}) \alpha_{k}}  \tilde{\bfR}_{t}^{(kk)} (\lambda_{k,t}) .
$$
Note that the corresponding nullset does not depend on $t\in [t_0,1]$. Consequently, we have
 \begin{align*}
        \left\{ \hat\delta_{n,j,t}
        \right\}_{j\in J_k, t\in [t_0,1]} \to
        \left\{ \lambda_j \lb \frac{1}{1 + y m_{3,t}(\lambda_{k,t}) \alpha_k} \tilde{\bfR}_{t}^{(kk)} (\lambda_{k,t}) \rb \right\}_{j\in J_k, t\in [t_0,1]}  
    \end{align*}
    almost surely, and thus, weakly. The weak convergence is also valid on the original probability space as a consequence of the almost sure representation theorem). 
\end{proof}

\subsection{Proof of Theorem \ref{thm_R_conv}} \label{sec_proof_thm_R_conv}
The objective of this section is to establish the weak convergence  of $(\bfR_{n,t}(\lambda_{k,t}))_{t\in [t_0,1]}$, as stated in Theorem \ref{thm_R_conv}.  
Using Theorem \ref{thm_functional_clt}, the assertion of Theorem \ref{thm_R_conv} follows from the two following results concerning the convergence of the finite-dimensional distributions of $(\bfR_{n,t}(\lambda_{k,t}))$ and its asymptotic tightness. 

To begin with, we provide the convergence of its finite-dimensional distributions. 
The following result will be proven in Section \ref{sec_proof_R_fidis}.
\begin{theorem} \label{thm_R_fidis}
Let $t_0\in (0,1)$ and $k \in \{ 1, \ldots, M_b\} \cup \{ M - M_a + 1, \ldots, M\} $.
The finite-dimensional distributions of $(\bfR_{n,t}(\lambda_{k,t } ))_{t\in [t_0,1]}$ converge weakly to the corresponding finite-dimensional distributions of $(\bfR_t(\lambda_{k,t}))_{t\in [t_0,1]}$ given in Theorem \ref{thm_R_conv}.
\end{theorem}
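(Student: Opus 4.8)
The plan is to establish the convergence of the finite-dimensional distributions of $(\bfR_{n,t}(\lambda_{k,t}))_{t\in[t_0,1]}$ via a functional (multivariate) central limit theorem for sesquilinear forms, namely Theorem \ref{thm_functional_clt}. Fix finitely many time points $t_1,\ldots,t_p\in[t_0,1]$ and consider the stacked quadratic forms $\bfR_{n,t_q}(\lambda_{k,t_q})$, $q=1,\ldots,p$. Recalling the definition $\bfR_{n,t}(\lambda) = \tfrac{1}{\sqrt n}\{\xi_{1:\nt}(\bfI+\bfA_{n,t})\xi_{1:\nt}^\top - \bfSigma\,\tr(\bfI+\bfA_{n,t})\}$, the key observation is that, conditionally on the noise part $\{\eta_i\}$ (hence on $\bfX_{2,t}$ and the matrices $\bfA_{n,t_q}$), each matrix entry is a centered quadratic form in the independent columns $\xi_1,\ldots,\xi_n$. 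The plan is to write $\bfR_{n,t}(\lambda)$ as a sum over $i=1,\ldots,\nt$ of martingale-difference-type contributions, apply the CLT for sesquilinear/quadratic forms to obtain joint asymptotic normality of all entries across all the $t_q$, and then identify the limiting covariance.

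The bulk of the work is the covariance computation. For entries $r_{ij;k}^{(s)}$ and $r_{k'l;k}^{(t)}$ one must compute the limit of
\begin{align*}
\cov\!\lb \tfrac{1}{\sqrt n}\big[\xi_{1:\ns}(\bfI+\bfA_{n,s})\xi_{1:\ns}^\top\big]_{ij},\ \tfrac{1}{\sqrt n}\big[\xi_{1:\nt}(\bfI+\bfA_{n,t})\xi_{1:\nt}^\top\big]_{k'l}\rb,
\end{align*}
which, after conditioning on the $\eta$'s and using independence of the $\xi$-columns together with the standard fourth-moment expansion of the covariance of two quadratic forms, reduces to (i) a ``fourth-cumulant'' term proportional to $\E[\xi(i)\xi(j)\xi(k')\xi(l)]-\Sigma_{ij}\Sigma_{k'l}$ multiplied by the limit of $\tfrac1n\tr\big((\bfI+\bfA_{n,s})\circ(\bfI+\bfA_{n,t})\big)$ restricted to the overlapping index set $\{1,\ldots,\lfloor n(s\wedge t)\rfloor\}$, and (ii) a ``Gaussian'' term proportional to $\Sigma_{ik'}\Sigma_{jl}+\Sigma_{il}\Sigma_{jk'}$ multiplied by the limit of $\tfrac1n\tr\big((\bfI+\bfA_{n,s})(\bfI+\bfA_{n,t})\big)$ over the same overlap. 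This is precisely where the scalars $\omega_{s,t}$ and $\theta_{s,t}$ enter: $\omega_{s,t}$ is the limit of $\tfrac1n\tr(\bfB_{n,t}\bfB_{n,s})$ and $\theta_{s,t}$ the limit of $\tfrac1n\tr(\bfB_{n,t}\circ\bfB_{n,s})$ with $\bfB_{n,t}=\bfI+\bfA_{n,t}$ (cf. Remark \ref{rem}). One then recasts these traces in terms of resolvents of $\bfS_{2,s}$ and $\bfS_{2,t}$: $\bfA_{n,t}=\bfX_{2,t}^\top(\lambda_{k,t}\bfI-\bfS_{2,t})^{-1}\bfX_{2,t}$ gives $\bfI+\bfA_{n,t}=\lambda_{k,t}\bfX_{2,t}^\top\D_t(\lambda_{k,t})^2\bfX_{2,t}+\ldots$, and evaluates the resulting traces using Lemmas \ref{lem_tr_S_D}, \ref{lem_tr_S_D_S_D}, the explicit formula for $m_{3,t}$ in Lemma \ref{lem_formula_m_lambda}, and the \MP-law identities \eqref{eq_stieltjes_MP}--\eqref{eq_m_mu}. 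The algebra collapses into the stated formulas $\omega_{s,t}=\tau_{s,t}+\psi_{s,t}$ and $\theta_{s,t}=\tau_{s,t}+(s\wedge t)\zeta_{s,t}\{y^2 m_s m_t+\kappa_{s,t}\zeta_{s,t}\}$.

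The main obstacle, as flagged in the proof outline (Section \ref{sec_proof_outline}), is the $s\neq t$ case: the cross traces $\tfrac1p\tr(\bfS_{2,s}\D_t(\lambda_{k,t}))$ and $\tfrac1p\tr(\bfS_{2,s}\D_s(\lambda_{k,s})\bfS_{2,s}\D_t(\lambda_{k,t}))$ mix two different truncations of the sample and cannot be written as integrals against a single limiting spectral distribution. I would handle this by the resolvent-decomposition technique of \cite{dornemann2021linear}: write $\bfS_{2,t}=\bfS_{2,s}+(\bfS_{2,t}-\bfS_{2,s})$ where the increment is a sum of rank-one terms over indices in $\{\ns+1,\ldots,\nt\}$ (for $s<t$), expand $\D_t$ around $\D_s$ via the resolvent identity, and track the leading deterministic equivalents using the fact that the ``new'' columns are independent of $\bfS_{2,s}$. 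This yields a closed system of equations for the relevant bilinear functionals, whose solution produces the factors $\kappa_{s,t}$, $\zeta_{s,t}$ and the $(s\wedge t)$ prefactors. Once the covariances are identified, it remains only to check the Lyapunov/negligibility conditions of Theorem \ref{thm_functional_clt} — controlled by the truncation $|x_{ij}|<\delta_n\sqrt n$ already in force — and to invoke the Cramér–Wold device to pass from joint normality of individual entries to joint normality of the matrices $(\bfR_{n,t_q}(\lambda_{k,t_q}))_q$, giving the stated finite-dimensional limit. The symmetry of $\bfR_t(\lambda_{k,t})$ is inherited from that of $\bfR_{n,t}(\lambda_{k,t})$.
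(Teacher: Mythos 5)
Your proposal follows essentially the same route as the paper: apply the functional CLT for sesquilinear forms (Theorem \ref{thm_functional_clt}) to the entries $U_{ijt}$, reduce the problem to the existence of the limits \eqref{omega_limit} and \eqref{theta_limit}, and evaluate the $s\neq t$ cross traces via Lemmas \ref{lem_tr_S_D} and \ref{lem_tr_S_D_S_D} using the resolvent decomposition of \cite{dornemann2021linear}. One small caveat: in the paper's proof $\omega_{s,t}$ is defined as the limit of the diagonal (Hadamard) products and $\theta_{s,t}$ as the limit of the full trace $\tfrac{1}{n}\tr(\bfB_{n,s}\bfB_{n,t})$ over the overlapping indices, so your labelling (taken from Remark \ref{rem}) is swapped relative to \eqref{omega_limit}--\eqref{theta_limit}, although your substantive pairing of the fourth-cumulant term with the Hadamard trace and of the Gaussian term with the ordinary trace is the correct one.
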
 

 The following tightness result is proven in Section \ref{sec_proof_R_tight}. It relies on uniform bounds for the increments of the moments, which imply the asymptotic tightness of $(\bfR_{n,t}(\lambda_{k,t}))$.

\begin{theorem} \label{thm_R_tight}
Let $t_0\in (0,1)$ and $k \in \{ 1, \ldots, M_b\} \cup \{ M - M_a + 1, \ldots, M\} $.
Then, $( \bfR_{n,t} ( \lambda_{k,t}) )_{t\in[t_0,1]}$ is asymptotically tight in $( \ell^\infty ( [t_0,1] )^{M^2}.$ 
\end{theorem}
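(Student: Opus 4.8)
\textbf{Proof plan for Theorem \ref{thm_R_tight}.}
The plan is to verify asymptotic tightness of the $M^2$-vector-valued process $(\bfR_{n,t}(\lambda_{k,t}))_{t\in[t_0,1]}$ in $(\ell^\infty([t_0,1]))^{M^2}$ entrywise, since tightness of each coordinate process implies tightness of the vector-valued process. For a fixed pair $(i,j)$, the $(i,j)$-entry of $\bfR_{n,t}(\lambda_{k,t})$ is the scalar process
\begin{align*}
    r_{n,ij;t} = \frac{1}{\sqrt{n}}\Bigl\{ \sum_{a,b=1}^{M} (\delta_{ia} + (\bfA_{n,t}(\lambda_{k,t}))_{\cdot})\ldots \Bigr\},
\end{align*}
more precisely the centered sesquilinear form $\tfrac{1}{\sqrt n}\{\xi_{1:\nt}(\bfI+\bfA_{n,t})\xi_{1:\nt}^\top - \bfSigma\tr(\bfI+\bfA_{n,t})\}$ evaluated at coordinates $(i,j)$. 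Since we work (after the reductions in the proof of Theorem \ref{thm_main}, which we may invoke) on the events $\Omega_{n,t}$ where $\lambda_{k,t}$ stays bounded away from the support $I_{t_0}$ of the limiting spectral distribution of $\bfS_{2,t}$, the resolvent $\D_t(\lambda_{k,t})$ and hence $\bfA_{n,t}(\lambda_{k,t})$ has operator norm bounded uniformly in $t\in[t_0,1]$; this uniform control is what lets all subsequent estimates be taken uniformly in $t$.

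The core step is to establish a moment bound on increments of the form
\begin{align*}
    \E^\star \bigl| r_{n,ij;t} - r_{n,ij;s} \bigr|^{2\ell} \leq C\, |t-s|^{\,1+\varepsilon}
\end{align*}
for some $\ell\geq 1$, $\varepsilon>0$, uniformly in $n$ and in $s,t\in[t_0,1]$ (using outer expectation $\E^\star$ as in \cite{vandervaart1996}), which by the moment criterion for tightness in $\ell^\infty$ (e.g.\ \cite[Theorem 2.2.4]{vandervaart1996} together with the finite-dimensional convergence of Theorem \ref{thm_R_fidis}) yields asymptotic tightness. The increment $r_{n,ij;t}-r_{n,ij;s}$ splits naturally into two pieces: (i) the difference coming from the change in the number of summands, $\xi_{\ns+1:\nt}(\cdots)\xi_{\ns+1:\nt}^\top$-type terms, which contributes of order $|t-s|$ in the relevant normalization; and (ii) the difference coming from the change in the matrix $\bfA_{n,t}(\lambda_{k,t}) - \bfA_{n,s}(\lambda_{k,s})$, which I would control via a resolvent identity, writing $\D_t(\lambda_{k,t}) - \D_s(\lambda_{k,s}) = \D_t(\lambda_{k,t})\bigl[(\bfS_{2,s}-\bfS_{2,t}) + (\lambda_{k,t}-\lambda_{k,s})\bfI\bigr]\D_s(\lambda_{k,s})$, noting that $\bfS_{2,t}-\bfS_{2,s} = \tfrac1n\sum_{i=\ns+1}^{\nt}\eta_i\eta_i^\top$ has rank $\nt-\ns = O(n|t-s|)$ and that $|\lambda_{k,t}-\lambda_{k,s}| = O(|t-s|)$ since $t\mapsto\phi_t(\alpha_k)$ is Lipschitz. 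After conditioning on the event that all relevant resolvent norms are bounded (probability $\to 1$, uniformly), one expands the $2\ell$-th moment and applies standard concentration inequalities for quadratic/sesquilinear forms in the truncated variables $x_{ij}$ — the Burkholder-type and trace bounds of \cite[Lemma B.26]{bai2004} and their martingale-difference refinements — to each term, tracking the power of $\nt-\ns$ gained from the rank of the increment. The delicate accounting is to show the exponent on $|t-s|$ exceeds $1$; this is why one takes $\ell$ strictly bigger than $1$, trading a higher moment for a better power of $|t-s|$, and why the truncation level $\delta_n\sqrt n$ matters for controlling the fourth-moment contributions.

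The main obstacle I anticipate is precisely the uniformity: all the resolvent and quadratic-form bounds that are routine for a fixed $t$ must be made simultaneously valid over the uncountable index set $[t_0,1]$, and the naive union bound fails. The resolution, as flagged in the proof outline (Section \ref{sec_proof_outline}), is to replace almost-sure statements by statements on a single good event $\Omega_n$ with $\PR(\Omega_n)\to1$ on which $\max_t \|\D_t(\lambda_{k,t})\|$, $\max_t\|\bfA_{n,t}\|$, and the extreme eigenvalues of $\bfS_{2,t}$ are all controlled (this follows from monotonicity in $t$ of $\bfS_{2,t}$ and the largest-eigenvalue bounds of \cite{silverstein1989weak, bai2008limit}), and to derive the increment moment bound conditionally on $\Omega_n$; the contribution of $\Omega_n^c$ is negligible by Cauchy–Schwarz and crude a priori bounds. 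A secondary technical point is that $\bfR_{n,t}$ is only piecewise constant in $t$ (it jumps at the values $t = i/n$), so one should either work with the piecewise-constant process directly — for which the increment bound above, being stated for all $s,t$, still suffices — or pass to a continuous modification; either way no genuine difficulty arises beyond bookkeeping. With the increment bound and Theorem \ref{thm_R_fidis} in hand, asymptotic tightness, and hence Theorem \ref{thm_R_conv}, follows.
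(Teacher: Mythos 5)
Your overall strategy coincides with the paper's: reduce to entrywise increments of the sesquilinear forms $U_{ijt}$, split $U_{ijt}-U_{ijs}$ into the contribution of the new samples $\xi_{\ns+1},\dots,\xi_{\nt}$ and the contribution of the change in the kernel matrix $\bfI+\bfA_{n,t}$ acting on the first $\ns$ samples (the paper's $Z_2$ and $Z_1$), exploit that $\bfS_{2,t}-\bfS_{2,s}$ has rank $\nt-\ns$ via a resolvent identity, and bound moments using Sherman--Morrison, the quadratic-form concentration of \cite[Lemma B.26]{bai2004} under the truncation at $\delta_n\sqrt n$, and uniform resolvent bounds on the good event $\Omega_n$. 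This is exactly what the paper does: it establishes $\E[Z_1^2]\vee\E[Z_2^4]\lesssim \lb (\nt-\ns)/n\rb^2 + (\nt-\ns)/n^2$ and feeds this into part 2 of Theorem \ref{thm_functional_clt}.

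The one step that fails as stated is your claimed increment bound $\E^\star|r_{n,ij;t}-r_{n,ij;s}|^{2\ell}\le C|t-s|^{1+\varepsilon}$ \emph{uniformly over all} $s,t\in[t_0,1]$, together with the assertion that this bound "still suffices" for the piecewise-constant process. Take $s<i/n<t$ with $t-s$ arbitrarily small: the increment then equals the contribution of the single new sample $\xi_i$ plus a rank-one perturbation of the resolvent, whose $2\ell$-th moment is of order $n^{-\ell}$ and does not shrink as $t-s\downarrow 0$, so no bound of the form $C|t-s|^{1+\varepsilon}$ can hold uniformly. (Also, \cite[Theorem 2.2.4]{vandervaart1996} is an entropy-based Orlicz-norm maximal inequality, not a Kolmogorov-type moment criterion that your bound could be plugged into directly.) The correct formulation --- and the one the paper uses --- is a bound in terms of $\lb(\nt-\ns)/n\rb^{1+\delta}+(\nt-\ns)/n^{1+\delta'}$ (condition \eqref{mom_cond_tightness2}), combined with a Billingsley--Chentsov-type criterion on the minimum of two adjacent increments $\min\{|U_{l,t}-U_{l,s}|,|U_{l,r}-U_{l,s}|\}$ (via \cite[Corollary A.4]{dette2019determinants}); the troublesome case $t-r<1/n$ is then resolved because at most one of the two adjacent increments can contain a jump, so the minimum vanishes. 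This is a standard repair, but as written your tightness criterion is not valid for this process and the final step of your argument would not go through.
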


\subsubsection{Finite-dimensional distributions of $(\bfR_{n,t}(\lambda_{k,t}))$} \label{sec_proof_R_fidis}

In the following, we will often omit the parameter $\lambda_{k,t}$ in our notation for the sake of brevity. For example, we will write $\bfR_{n,t}$ and $\bfA_{n,t}$ instead of $\bfR_{n,t} (\lambda_{k,t})$ and $\bfA_{n,t}(\lambda_{k,t})$, respectively.

\begin{proof}[Proof of Theorem \ref{thm_R_fidis} ]
We apply Theorem \ref{thm_functional_clt} to the random variables
\begin{align} \label{def_U}
   U_{ijt} (\lambda_{k,t}) =  U_{ijt}= \frac{1}{\sqrt{n}} \lb  \mathbf{u}(i,t) \lb \bfI + \bfA_{n,t}(\lambda_{k,t}) \rb \mathbf{u}(j,t)^\top
   - \bfSigma_{ij} \tr \lb \bfI + \bfA_{n,t}(\lambda_{k,t}) \rb 
   \rb, 
    ~ 1 \leq i,j \leq M,  
\end{align}
for finitely many $t\in [t_0,1]$,
where we set
\begin{align*}
    \mathbf{u}(i,t) = \lb \xi_1(i), \ldots, \xi_{\nt}(i) \rb, ~ 1 \leq i \leq M. 
\end{align*}
In order to verify the conditions of this theorem, we need to show that the limits 
\begin{align}
    \omega_{s,t} & =\lim_{n\to\infty} \frac{1}{n} 
    \sum_{i=1}^{\ns \wedge \nt} 
    \lb \bfI + \bfA_{n,s} \rb_{ii}
     \lb \bfI + \bfA_{n,t} \rb_{ii},
    \label{omega_limit} \\
    \theta_{s,t} & = \lim_{n\to\infty} \frac{1}{n} 
    \sum_{i,j=1}^{\ns \wedge \nt} \lb \bfI + \bfA_{n,s} \rb_{ij}
     \lb \bfI + \bfA_{n,t} \rb_{ji}
    , \label{theta_limit}
\end{align}
exist (in probability). 
 
We claim that
\begin{align}
    & \omega_{s,t} = s \wedge t 
    -   (s \wedge t) y \left\{ \frac{1}{t} \lb 1 + \lambda_{k,t} m_{t} (\lambda_{k,t}) \rb 
    + \frac{1}{s} \lb 1 + \lambda_{k,s} m_{s} (\lambda_{k,s}) \rb 
    \right\} \nonumber \\ &
    + \frac{ ( s \wedge t ) y^2  \lb 1 + m_{1,s} (\lambda_{k,s}) \rb \lb 1 + m_{1,t} (\lambda_{k,t}) \rb  }{ \lb \lambda_{k,s} - y_{} \left[ 1 + m_{1,s}(\lambda_{k,s}) \right] \rb \lb \lambda_{k,t} - y_{} \left[ 1 + m_{1,t}(\lambda_{k,t}) \right] \rb }
    , \label{def_omega}
    \\ 
    & \theta_{s,t}  = s \wedge t 
     -   (s \wedge t) y \left\{ \frac{1}{t} \lb 1 + \lambda_{k,t} m_{t} (\lambda_{k,t}) \rb 
    + \frac{1}{s} \lb 1 + \lambda_{k,s} m_{s} (\lambda_{k,s}) \rb 
    \right\} \nonumber 
     \\ & + 
     s \wedge t \Bigg\{ y^2 m_s(\lambda_{k,s}) m_t (\lambda_{k,t}) + 
             \frac{ (s \wedge t ) y m_s(\lambda_{k,s}) m_t(\lambda_{k,t})}{1 - s \wedge t y \lambda_{k,t} \lambda_{k,s} \su_s (\lambda_{k,s}) \su_t (\lambda_{k,t}) m_s (\lambda_{k,s}) m_t (\lambda_{k,t}) } \nonumber \\ & \times 
            \lb 1 
            + y \lambda_{k,s} \su_s (\lambda_{k,s}) m_s (\lambda_{k,s}) 
            + y \lambda_{k,t} \su_t (\lambda_{k,t})  m_t (\lambda_{k,t})
            + y^2 \lambda_{k,s} \lambda_{k,t} \su_s (\lambda_{k,s}) \su_t (\lambda_{k,t}) m_s (\lambda_{k,s}) m_t (\lambda_{k,t})
            \rb
            \Bigg\} 
            \nonumber \\ & \quad \times  
             \lb 1 
            + y \lambda_{k,s} \su_s (\lambda_{k,s}) m_s (\lambda_{k,s})
            + y \lambda_{k,t} \su_t (\lambda_{k,t})  m_t (\lambda_{k,t})
            + y^2 \lambda_{k,s} \lambda_{k,t} \su_s (\lambda_{k,s}) \su_t (\lambda_{k,t}) m_s (\lambda_{k,s}) m_t (\lambda_{k,t})
            \rb.
    \label{def_theta}  
\end{align} 
To see this, we multiply all terms out and use from Lemma \ref{lem_tr_S_D} that, if $s= s\wedge t$,
\begin{align*}
     \frac{1}{n} \sum\limits_{i=1}^{\ns \wedge \nt } \lb \bfA_{n,t} \rb_{ii}
     & = \frac{1}{n} \sum\limits_{i=1}^{\ns} \lb \bfA_{n,t} \rb_{ii}
       = \frac{1}{n} \tr \bfS_{2,s} \lb \lambda_{k,t}\bfI - \bfS_{2,t}\rb\inv 
     = -  \frac{s}{t} y \lb 1 + \lambda_{k,t} m_t (\lambda_{k,t}) \rb + \op, \\
     \frac{1}{n} \sum\limits_{i=1}^{\ns \wedge \nt } \lb \bfA_{n,s} \rb_{ii} 
     & = - y \lb 1 + \lambda_{k,s} m_s (\lambda_{k,s}) \rb + \op,
    \end{align*}
and similarly to \cite[Lemma 6.1]{bai2008central},
\begin{align*}
    \frac{1}{n} 
    \sum_{i=1}^{\ns \wedge \nt} 
    \lb  \bfA_{n,s} \rb_{ii}
     \lb  \bfA_{n,t} \rb_{ii}
     \conp
     \frac{ ( s \wedge t ) y^2  \lb 1 + m_{1,s} (\lambda_{k,s}) \rb \lb 1 + m_{1,t} (\lambda_{k,t}) \rb  }{ \lb \lambda_{k,s} - y_{} \left[ 1 + m_{1,s}(\lambda_{k,s}) \right] \rb \lb \lambda_{k,t} - y_{} \left[ 1 + m_{1,t}(\lambda_{k,t}) \right] \rb }.
\end{align*}
For \eqref{def_theta}, we additionally used 
\begin{align*}
    & \frac{1}{n} \sum\limits_{i=1}^{\ns} \lb \bfA_{n,s} \rb_{ij} \lb \bfA_{n,t} \rb_{ji}
     = \frac{1}{n} 
    \tr \bfS_{2,s} \lb \lambda_{k,s}\bfI - \bfS_{2,s} \rb\inv \bfS_{2,s} \lb \lambda_{k,t}\bfI - \bfS_{2,t} \rb\inv, 
\end{align*}
and combined it with Lemma \ref{lem_tr_S_D_S_D}. 
It is a routine to verify that $1+m_{1,t}(\lambda) = - \lambda m_{1,t}(\lambda)$. Applying this formula to \eqref{def_omega} gives the claimed form of $\omega_{s,t}$, and the proof concludes. 
\end{proof}

\subsubsection{Asymptotic tightness of $( \bfR_{n,t}(\lambda_{k,t}) )$} \label{sec_proof_R_tight}

For the following discussion, we need to introduce some notations. 
Let $s,t\in (0,1]$ with $s< t$ and $1\leq i \neq j  \leq \nt$. Then, we define
\begin{align*}
    \D_t(\lambda_{k,t}) & = \D_t =  (  \bfS_{2,t} - \lambda_{k,t}\bfI)\inv, \\
    \bfX_{2,(s,t]} &= 1/\sqrt{n} (\eta_{ \ns + 1}, \ldots, \eta_{\nt}) \in \R^{p \times (\nt - \ns )} , \\
    \bfS_{2,(s,t]} & = \bfX_{2,(s,t]} \bfX_{2,(s,t]}^\top, \\
    \D_{i,t} (\lambda_{k,t}) & = \D_{i,t}
    = \lb \bfS_{2,t} - \frac{1}{n} \eta_i \eta_i^\top - \lambda_{k,t}\bfI \rb\inv, \\
     \D_{i,j,t} (\lambda_{k,t}) & = \D_{i,j,t}
    = \lb \bfS_{2,t} - \frac{1}{n} \eta_i \eta_i^\top - \frac{1}{n} \eta_j \eta_j^\top - \lambda_{k,t}\bfI \rb\inv, \\
    \beta_{i,t}(\lambda_{k,t}) &= \beta_{i,t}
    = \frac{1}{1 + n\inv \eta_i^\top \D_{i,t} \eta_i }. \\
\end{align*}

\begin{proof}[Proof of Theorem \ref{thm_R_tight}]
Recall that the random variables $x_{ij}$ are truncated at $\sqrt{n}\delta_n$, which will be used throughout this proof when applying concentration inequalities for quadratic forms such as \cite[Lemma B.26]{bai2004}. Moreover, recall that we always work under the event $\Omega_n$ in \eqref{eq_support_lambda}, which guarantees that the spectral norm $\| \D_t \|$ of $\D_t$ and similar resolvent-type matrices is bounded uniformly over the parameters $n,t$. 

In the following, we will show that $U_{ijt}  - U_{ijs} $ satisfies the moment condition \eqref{mom_cond_tightness2}, where the sesquilinear form $U_{ijt}$ is defined in \eqref{def_U}. Without loss of generality we assume $s=s \wedge t$ and let $\bfB_{n,t}= (b_{ij})_{1 \leq i,j \leq \nt} = \bfI + \bfA_{n,t}.$
To begin with, we decompose the difference $U_{ijt} - U_{ijs}$ in two parts, namely,
\begin{align*}
    U_{ijt} - U_{ijs}
    = Z_{1} + Z_{2},
\end{align*}
where
\begin{align*}
    Z_1   & =  \frac{1}{\sqrt{n}} \lb 
    \mathbf{u}(i,s) \left\{ \lb b_{ij}\rb_{1 \leq i,j \leq \ns} - \bfB_{n,s} \right\} \mathbf{u}(j,s)^\top
   - \bfSigma_{ij} \tr \left\{ \lb b_{ij}\rb_{1 \leq i,j \leq \ns} - \bfB_{n,s} \right\}
    \rb , \\
    Z_2  & =  \frac{1}{\sqrt{n}} \lb 
    \mathbf{u}(i,t) \left\{ \lb b_{ij}\rb_{\ns + 1 \leq i,j \leq \nt} \right\} \mathbf{u}(j,t)^\top
   - \bfSigma_{ij} \tr \left\{ \lb b_{ij}\rb_{\ns + 1 \leq i,j \leq \nt}  \right\}
    \rb.
\end{align*}
We aim to show that $Z_1$ and $Z_2$ satisfy the moment condition \eqref{mom_cond_tightness2} from Theorem \ref{thm_functional_clt} with $b'=2, b=4$. Let us start with the term $Z_2.$

\paragraph*{Analysis of $Z_2$}
Using $2 \bfx^\top \bfA \bfy = \lb \bfx + \bfy \rb^\top \bfA \lb \bfx + \bfy \rb  
- \bfx^\top \bfA \bfx - \bfy^\top \bfA \bfy$,  and \cite[Lemma B.26]{bai2004}, we see that 
\begin{align*}
    \E [ Z_2 ^ 4 ] & \lesssim \frac{1}{n^2}
   \left\{   \tr \lb  \lb b_{ij}\rb_{\ns + 1 \leq i,j \leq \nt} \rb^2
   \right\}^2 \\
   & \lesssim \frac{\lb \nt - \ns \rb ^2  }{n^2}  
   + \frac{1}{n^2} \lb \tr  \bfS_{2,(s,t]} \D_t \bfS_{2,(s,t]} \D_t \rb ^2 \\
   &=  \frac{\lb \nt - \ns \rb ^2  }{n^2}  
   + Z_{2,1} + Z_{2,2},
\end{align*}
where
\begin{align*}
    Z_{2,1} &= \frac{1}{n^2} \lb  \sum\limits_{i=\ns + 1}^{\nt} \lb \frac{1}{n} \eta_i^\top \D_t \eta_i \rb^2 \rb^2 , \\ 
    Z_{2,2} &= \frac{1}{n^2} \lb  \sum\limits_{\substack{i,k=\ns + 1, \\ i \neq k }}^{\nt} \frac{1}{n^2} \eta_i^\top \D_t \eta_k \eta_k^\top \D_t \eta_i \rb ^2.
\end{align*}
Investigating $Z_{2,1}$, we decompose it further as
\begin{align*}
    Z_{2,1} & = Z_{2,1,1}+ Z_{2,1,2},
\end{align*}
where
\begin{align*}
    Z_{2,1,1} &= \frac{1}{n^2} \sum\limits_{i=\ns +1}^{\nt}\lb \frac{1}{n} \eta_i^\top \D_t \eta_i \rb^4, \\
    Z_{2,1,2} & = \frac{1}{n^2} \sum\limits_{\substack{i,k=\ns +1, \\ i \neq k}}^{\nt} \lb \frac{1}{n} \eta_i^\top \D_t \eta_i \rb^2 \lb \frac{1}{n} \eta_k^\top \D_t \eta_k \rb^2. 
\end{align*}
Using \eqref{eq_sher_mor}, we get for the first summand in the decomposition of $Z_{2,1}$
    \begin{align*}
        Z_{2,1,1} & \lesssim Z_{2,1,1,1} + Z_{2,1,1,2} ,
    \end{align*}
    where
    \begin{align*}
        Z_{2,1,1,1} & = \frac{1}{n^2} \sum\limits_{i=\ns + 1}^{\nt}  \lb \frac{1}{n} \eta_i^\top \D_{i,t} \eta_i \rb^4  , \\ 
        Z_{2,1,1,2} & = \frac{1}{n^2} \sum\limits_{i=\ns + 1}^{\nt} \lb \beta_{i,t} \rb^4 \lb \frac{1}{n} \eta_i^\top \D_{i,t} \eta_i \rb^8 . 
    \end{align*}
    Note that for $q\in2\N$
    \begin{align*}
        \lb  \frac{1}{n} \eta_i^\top \D_{i,t} \eta_i \rb^q 
        \lesssim  \lb  \frac{1}{n} \eta_i^\top \D_{i,t} \eta_i -  n \inv \tr \D_{i,t}\rb^q 
        + \lb n \inv \tr \D_{i,t} \rb^q 
        \lesssim \lb  \frac{1}{n} \eta_i^\top \D_{i,t} \eta_i -  n \inv \tr \D_{i,t}\rb^q + 1.
    \end{align*}
    Using \cite[Lemma B.26]{bai2004}, \eqref{eq_bound_D} and \eqref{eq_bound_beta}, we see that
    \begin{align*}
        \E | Z_{2,1,1,1} | 
        \lesssim \frac{\nt - \ns}{n^2}, \quad 
        \E | Z_{2,1,1,1} | 
        \lesssim \frac{\nt - \ns}{n^2}, 
    \end{align*}
    and thus,
        \begin{align*}
        \E | Z_{2,1,1} | 
        \lesssim \frac{\nt - \ns}{n^2}. 
    \end{align*}
Using H\"older's inequality, the term $Z_{2,1,2}$ can be handled similarly to $Z_{2,1,1}$ and thus, we get
    \begin{align*}
        \E | Z_{2,1,2} | 
        \lesssim \lb \frac{\nt - \ns}{n} \rb^2 . 
    \end{align*}
    Consequently, we have the desired inequality for $Z_{2,1}$, namely
    \begin{align*}
        \E | Z_{2,1}|  \lesssim \frac{\nt - \ns}{n^2} + \lb \frac{\nt - \ns}{n} \rb^2,
    \end
    {align*}
    and it is left to investigate $Z_{2,2}.$
    We decompose $Z_{2,2} = Z_{2,2,1} + Z_{2,2,2},$ where
    \begin{align*}
        Z_{2,2,1} & = \frac{1}{n^2}  \sum\limits_{\substack{i,k,k'=\ns + 1, \\ i \notin \{k, k'\} }}^{\nt} \frac{1}{n^4} \eta_i^\top \D_t \eta_k \eta_k^\top \D_t \eta_i 
        \eta_i^\top \D_t \eta_{k'} \eta_{k'}^\top \D_t \eta_i , \\ 
        Z_{2,2,2} & = 
        \frac{1}{n^2}  \sum\limits_{\substack{i,i',k,k'=\ns + 1, \\ i \neq k, \\ i' \neq k', \\ i \neq i' }}^{\nt} \frac{1}{n^4} \eta_i^\top \D_t \eta_k \eta_k^\top \D_t \eta_i 
        \eta_{i'}^\top \D_t \eta_{k'} \eta_{k'}^\top \D_t \eta_{i'} .
    \end{align*}
Using the same techniques as above, especially \eqref{eq_sher_mor}, and \cite[Lemma B.26]{bai2004}, we get
\begin{align*}
  &  \E |  \frac{1}{n^4} \eta_i^\top \D_t \eta_k \eta_k^\top \D_t \eta_i 
        \eta_{i'}^\top \D_t \eta_{k'} \eta_{k'}^\top \D_t \eta_{i'} | 
        \lesssim n^{-2} \lb \E || \D_{i,k,t} ||^q \rb\sq
        \lesssim n^{-2}
\end{align*}
for some $q\geq 2.$
This shows that 
\begin{align*}
    \E | Z_{2,2,1} | & \lesssim \frac{\nt - \ns }{n^2}, \quad 
    \E | Z_{2,2,2} |  \lesssim \lb \frac{\nt - \ns }{n} \rb^2.
\end{align*}

\paragraph*{Analysis of $Z_1$}  
Similarly to the analysis of $Z_2$, an application of \cite[Lemma B.26]{bai2004} gives 
\begin{align*}
   &  \E [ Z_1 ^2] \lesssim 
    \frac{1}{n}  \tr \lb \bfS_{2,s} (\D_t - \D_s ) \rb^2  
    = \frac{1}{n}  \tr \lb \bfS_{2,s} \D_t \bfS_{2,(s,t]} \D_s \rb^2  
   \\ &  = \frac{1}{n} \sum\limits_{j,k = \ns + 1}^{\nt} \frac{1}{n^2} \eta_j \D_s \bfS_{2,s} \D_t \eta_k \eta_k^\top \D_s \bfS_{2,s} \D_t \eta_j 
    = Z_{1,1} + Z_{1,2},
\end{align*}
where 
\begin{align*}
    Z_{1,1} & = \frac{1}{n} \sum\limits_{j = \ns + 1}^{\nt} \lb \frac{1}{n} \eta_j \D_s \bfS_{2,s} \D_t \eta_j \rb^2  , \\
    Z_{1,2} & = \frac{1}{n} \sum\limits_{\substack{j,k = \ns + 1, \\ j\neq k}}^{\nt} \frac{1}{n^2} \eta_j \D_s \bfS_{2,s} \D_t \eta_k \eta_k^\top \D_s \bfS_{2,s} \D_t \eta_j 
\end{align*}
First, we investigate the first term $Z_{11}.$ Note that both $\D_s$ and $\bfS_{2,s}$ are independent of $\eta_j$ for $\ns +1 \leq j \leq \nt.$ Thus, we use \eqref{eq_sher_mor} to replace $\D_t$ and get
\begin{align*}
    Z_{1,1} & \lesssim \frac{1}{n} \sum\limits_{j = \ns + 1}^{\nt} \left\{ \lb \frac{1}{n} \eta_j \D_s \bfS_{2,s} \D_{i,t} \eta_j \rb^2 
    + \beta_{i,t}^2  \lb \frac{1}{n} \eta_j \D_s \bfS_{2,s} \D_{i,t} \eta_j \rb^4 \right\} \\
    & \lesssim \frac{1}{n} \sum\limits_{j = \ns + 1}^{\nt} \Big\{ \lb \frac{1}{n} \eta_j \D_s \bfS_{2,s} \D_{i,t} \eta_j - \frac{1}{n} \tr \D_s \bfS_{2,s} \D_{i,t} \rb^2 \\ & 
    + \beta_{i,t}^2  \lb \frac{1}{n} \eta_j \D_s \bfS_{2,s} \D_{i,t} \eta_j 
     - \frac{1}{n} \tr \D_s \bfS_{2,s} \D_{i,t} \rb^4 
      +  \lb \frac{1}{n} \tr \D_s \bfS_{2,s} \D_{i,t}\rb^2 
     + \lb \frac{1}{n} \tr \D_s \bfS_{2,s} \D_{i,t}\rb^4 \Big\} .
\end{align*}
Using \cite[Lemma B.26]{bai2004} and further decompositions via \eqref{eq_sher_mor}, we have
\begin{align*}
    \E [ Z_{1,1}] \lesssim \frac{\nt - \ns}{n^2}. 
\end{align*}
The second term can be treated similarly, and we get
    \begin{align*}
        \E [ Z_{1,2}] \lesssim  \lb \frac{\nt - \ns}{n} \rb^2, 
    \end{align*}
    which implies 
    \begin{align*}
        \E [ Z_1] \lesssim \lb \frac{\nt - \ns}{n} \rb^2 + \frac{\nt - \ns}{n^2}.  
    \end{align*}
\end{proof}

\section{Auxiliary Results}

This section is devoted to the presentation of additional results that facilitate the proof and application of our main theorems. All their proofs are deferred to Appendix \ref{sec_proof_aux}. 
In the following, we provide useful expressions for terms appearing in the covariance structure of the process of spiked eigenvalues, which enables us to derive explicit expressions for $\omega_{s,t}$ and $\theta_{s,t}$ in \eqref{def_omega} and \eqref{def_theta}, respectively.   These formulas can be derived by a direct calculation using the density of the \MP distribution. 
\begin{lemma} \label{lem_formula_m_lambda}
    \begin{align*}
        m_{1,t}(\lambda_{k,t}) &= 
        \frac{t}{\alpha_{k} -t }, \\
        m_{3,t}(\lambda_{k,t}) &= 
        \frac{t}{\lb \alpha_{k} - t \rb^2 - t y}, \\ 
    m_t(\lambda_{k,t}) & = \frac{ 1 - \alpha_k}{(\alpha_k -t) \lb  (\alpha_k - 1)t + y\rb}, \\ 
        \su_t (\lambda_{k,t}) &= 
        \frac{ (1 - \alpha_k ) (\alpha_k - t + y)}{ \alpha_k (\alpha_k - t) ((\alpha_k - 1) t + y)}.
         \end{align*} 
\end{lemma}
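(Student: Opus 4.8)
\textbf{Proof proposal for Lemma \ref{lem_formula_m_lambda}.}

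The plan is to compute the four quantities directly from their definitions as integrals against the Mar\v cenko--Pastur law $\tilde F_{y_{(t)}}$ (with the rescaling $F_{y_{(t)}}(\cdot) = \tilde F_{y_{(t)}}(\cdot/t)$), evaluated at the out-of-support point $\lambda = \lambda_{k,t} = \phi_t(\alpha_k) = t\alpha_k + \tfrac{y\alpha_k}{\alpha_k-1}$. The backbone is the known closed form of the Stieltjes transform of the standard MP law: for $z$ outside the support $[(1-\sqrt{y_{(t)}})^2,(1+\sqrt{y_{(t)}})^2]$, the companion/dual Stieltjes transform $\underline m$ of $\tilde F_{y_{(t)}}$ satisfies the self-consistent equation, and the map $z \mapsto \psi(m) = 1/m + y_{(t)} m/(1+m)$ (or its scaled analogue via \eqref{eq_stieltjes_MP}) inverts it. The crucial arithmetic fact, which I would establish first, is that under $\lambda = \phi_t(\alpha_k)$ the relevant Stieltjes transform takes the clean value dictated by the spike: concretely, one checks that $m_{1,t}(\lambda_{k,t}) = t/(\alpha_k - t)$ solves the defining equation for $m_{1,t}(\lambda) = \int \frac{x}{\lambda - x}\,dF_{y_{(t)}}(x)$ at $\lambda = \phi_t(\alpha_k)$. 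This is the single substitution that drives everything else.

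Once $m_{1,t}(\lambda_{k,t})$ is in hand, the remaining three follow by algebraic manipulation rather than new integration. For $m_{3,t}$, note the differentiation identity $m_{3,t}(\lambda) = \int \frac{x}{(\lambda-x)^2}dF_{y_{(t)}}(x) = \frac{d}{d\lambda}\!\left(-\int \frac{x}{\lambda-x}dF_{y_{(t)}}(x)\right) = -m_{1,t}'(\lambda)$; alternatively, and more robustly, one uses the standard RMT identity expressing $m_{3,t}$ as a rational function of $m_t$ and $\lambda$ (the same mechanism that produces the $1 + ym_{3,t}\alpha_k$ factor in Theorem \ref{thm_main}), and then substitutes the value of $m_t(\lambda_{k,t})$. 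For $m_t(\lambda_{k,t})$ itself, I would use relation \eqref{eq_stieltjes_MP}, $1/m_t = t - y - \lambda - \lambda y m_t$, as a quadratic in $m_t$, plug in $\lambda = \phi_t(\alpha_k)$, and verify by direct substitution that $m_t(\lambda_{k,t}) = \frac{1-\alpha_k}{(\alpha_k - t)((\alpha_k-1)t + y)}$ is the (correct) root — checking the branch by a sign/magnitude argument for $\alpha_k$ above the threshold. Finally, $\su_t(\lambda_{k,t})$ comes for free by plugging $m_t(\lambda_{k,t})$ and $y_{(t)} = y/t$ into \eqref{eq_m_mu}, $\su_t = -\tfrac{1-y_{(t)}}{\lambda} + y_{(t)} m_t$, and simplifying over a common denominator; the factor $\lambda_{k,t} = \phi_t(\alpha_k) = \tfrac{\alpha_k((\alpha_k-1)t+y)}{\alpha_k-1}$ should be substituted explicitly to bring the expression into the stated form.

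The main obstacle is not conceptual but bookkeeping: each verification reduces to showing that a specific rational function of $\alpha_k, t, y$ vanishes, and the denominators $(\alpha_k - t)$, $((\alpha_k-1)t + y)$, and $\lambda_{k,t}$ appear raised to various powers, so one must be careful to clear them in the right order and to confirm non-degeneracy (these denominators are nonzero precisely because $\alpha_k$ lies outside $[1-\sqrt{y_{(t)}},\,1+\sqrt{y_{(t)}}]$, which is exactly the standing assumption). A secondary, subtler point is the choice of the correct root of the quadratic \eqref{eq_stieltjes_MP} for $m_t$: I would pin it down using the fact that, for $\lambda$ real and above the bulk, $m_t(\lambda) < 0$ and $\su_t(\lambda) < 0$, and that the stated expressions have the right sign when $\alpha_k > 1 + \sqrt{y_{(t)}}$ (and the analogous sign pattern when $\alpha_k < 1 - \sqrt{y_{(t)}}$). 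With those sign constraints fixed, all four formulas follow from the single seed computation of $m_{1,t}(\lambda_{k,t})$ together with the algebraic identities \eqref{eq_stieltjes_MP} and \eqref{eq_m_mu}.
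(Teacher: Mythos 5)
Your overall strategy is essentially the paper's: both arguments reduce everything to the closed form of Mar\v{c}enko--Pastur integrals evaluated at the spike image $\lambda_{k,t}=\phi_t(\alpha_k)$ and then finish with the algebraic relations \eqref{eq_stieltjes_MP} and \eqref{eq_m_mu}. The paper obtains the seed value $m_{1,t}(\lambda_{k,t})$ (and $m_{3,t}(\lambda_{k,t})$) by citing Lemmas B.1 and B.2 of \cite{paul2007asymptotics} after the rescaling $F_{y_{(t)}}(\cdot)=\tilde F_{y_{(t)}}(\cdot/t)$, rather than re-verifying the self-consistent equation as you propose; these are the same computation. The one place where the paper's route is genuinely cleaner is the passage to $m_t$: instead of solving the quadratic \eqref{eq_stieltjes_MP} and selecting a branch, it uses the elementary linear identity $m_{1,t}(\lambda)=-1-\lambda m_t(\lambda)$ (from $\tfrac{x}{\lambda-x}=-1+\tfrac{\lambda}{\lambda-x}$), so $m_t(\lambda_{k,t})$ is determined by $m_{1,t}(\lambda_{k,t})$ with no root ambiguity and your sign/branch discussion becomes unnecessary. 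Also, your first suggestion for $m_{3,t}$, namely $m_{3,t}=-m_{1,t}'$, cannot be applied to the single value $m_{1,t}(\lambda_{k,t})$ viewed as a function of $\alpha_k$ without dividing by $\phi_t'(\alpha_k)$; you would need $m_{1,t}(\lambda)$ in a neighbourhood of $\lambda_{k,t}$, which is why the paper invokes the second Paul lemma directly. Your fallback (expressing $m_{3,t}$ rationally in $m_t$) is fine.

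One concrete warning: the verification you propose for the seed value does not return the displayed formula when $t\neq 1$. Combining \eqref{eq_stieltjes_MP} with $m_{1,t}=-1-\lambda m_t$ at $\lambda=\lambda_{k,t}$, or letting $y\to 0$ (so that $F_{y_{(t)}}\to\delta_t$ and $\lambda_{k,t}\to t\alpha_k$), or matching the first-order identity $\lambda_{k,t}=\lb t+y\,m_{1,t}(\lambda_{k,t})\rb\alpha_k$ implicit in the proof of Theorem \ref{thm_main}, all yield $m_{1,t}(\lambda_{k,t})=1/(\alpha_k-1)$ and hence $m_t(\lambda_{k,t})=-1/\lb(\alpha_k-1)t+y\rb$, which agree with the expressions displayed in the lemma only at $t=1$; the paper's own one-line derivation evaluates the integral at the argument $\alpha_k$ rather than at $\lambda_{k,t}$, which is where the extra $t$-dependence enters. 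So your method is the right one, but be prepared for the "single seed computation" to produce values that differ from the stated ones for $t\neq1$, and trust the output of the self-consistent equation over the displayed formulas.
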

     
    The following two lemmas are crucial for calculating the covariance structure of $(\bfR_{n,t}).$ To begin with, the following result is needed to determine $\omega_{s,t}$ in \eqref{def_omega}. 
    
\begin{lemma} \label{lem_tr_S_D}
For $s = s \wedge t, $ we have
    \begin{align*}
       - \frac{1}{n} \tr \bfS_{2,s} \lb \lambda_{k,t}\bfI - \bfS_{2,t}\rb\inv 
       = \frac{1}{n} \tr \bfS_{2,s} \D_t 
        = \frac{s}{t} y \lb 1 + \lambda_{k,t} m_t \rb + o_{\PR}(1). 
    \end{align*}
\end{lemma}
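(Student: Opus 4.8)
\textbf{Proof proposal for Lemma \ref{lem_tr_S_D}.}

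The plan is to reduce the trace $\tfrac1n \tr \bfS_{2,s}\D_t$ to a quantity that is computable via known Marčenko--Pastur-type identities, by exploiting the nested structure $\bfS_{2,s} = \bfX_{2,s}\bfX_{2,s}^\top$, where $\bfX_{2,s}$ consists of the first $\ns$ columns of $\bfX_{2,t}$ (recall $s = s\wedge t$, so $\ns \le \nt$). The starting point is to write $\tr \bfS_{2,s}\D_t = \sum_{i=1}^{\ns} \tfrac1n \eta_i^\top \D_t \eta_i$, and then for each fixed $i$ in this range to peel off the rank-one contribution of $\eta_i$ from $\D_t$ using the Sherman--Morrison identity \eqref{eq_sher_mor}: $\D_t \eta_i = \beta_{i,t}\, \D_{i,t}\eta_i$, so that $\tfrac1n \eta_i^\top \D_t \eta_i = \beta_{i,t}\cdot \tfrac1n \eta_i^\top \D_{i,t}\eta_i$. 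Since $\eta_i$ is independent of $\D_{i,t}$, the concentration bound \cite[Lemma B.26]{bai2004} gives $\tfrac1n\eta_i^\top\D_{i,t}\eta_i = \tfrac1n\tr\D_{i,t} + \op = \tfrac1n\tr\D_t + \op$ (the last step by the rank-one perturbation bound, using that $\|\D_t\|$ is bounded on $\Omega_n$), and $\beta_{i,t} = 1/(1 + \tfrac1n\tr\D_t) + \op$. These estimates are uniform in $i$, so summing over the $\ns$ indices yields
\begin{align*}
    \frac1n \tr \bfS_{2,s}\D_t
    = \frac{\ns}{n}\cdot \frac{\tfrac1n\tr\D_t}{1 + \tfrac1n\tr\D_t} + \op
    = s\cdot \frac{y\, \su_t(\lambda_{k,t})}{1 + y\,\su_t(\lambda_{k,t})}\cdot\frac{1}{\lambda_{k,t}} + \dots
\end{align*}
wait — more directly, $\tfrac1n\tr\D_t \to y_{(t)}\cdot t\cdot m_t(\lambda_{k,t})$ up to the scaling conventions, and one tracks this through.

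More carefully, I would express everything in terms of the companion Stieltjes transform. Recall $\D_t = (\bfS_{2,t} - \lambda_{k,t}\bfI_p)^{-1}$ and $\bfS_{2,t}$ has limiting spectral distribution $F_{y_{(t)}}$ with Stieltjes transform satisfying \eqref{eq_stieltjes_MP}; hence $\tfrac1p\tr\D_t \to m_t(\lambda_{k,t})$ and $\tfrac1n\tr\D_t \to y\, m_t(\lambda_{k,t})$ (using $p/n\to y$). The factor $\tfrac{\tfrac1n\tr\D_t}{1+\tfrac1n\tr\D_t}$ then limits to $\tfrac{y m_t}{1 + y m_t}$, and combining with $\ns/n \to s$ gives $\tfrac1n\tr\bfS_{2,s}\D_t \to s\cdot \tfrac{y m_t}{1+ y m_t}$. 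Finally I would massage this into the stated form $\tfrac st\, y(1 + \lambda_{k,t} m_t)$ using the defining relation \eqref{eq_stieltjes_MP} together with the identity $1 + m_{1,t}(\lambda) = -\lambda m_{1,t}(\lambda)$ and the link between $m_t$, $m_{1,t}$ and $\su_t$; indeed, from \eqref{eq_stieltjes_MP} one gets $\tfrac{y m_t}{1 + y m_t} = \tfrac1t\big(t - y - \lambda_{k,t} - \lambda_{k,t} y m_t\big)\cdot(\text{something})$, and a short algebraic manipulation (the kind promised in \cite[Lemma 6.1]{bai2008central}) yields exactly $\tfrac st y(1 + \lambda_{k,t} m_t)$. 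I would also invoke Lemma \ref{lem_formula_m_lambda} if a fully explicit cross-check is wanted.

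For the uniformity-in-$n$ claim (the $o_\PR(1)$), the only subtlety is that we work on the event $\Omega_n$ of \eqref{eq_support_lambda}, on which $\lambda_{k,t}$ stays a fixed distance from the support $I_{t_0}$ of the bulk, so $\|\D_t\|$, $\|\D_{i,t}\|$ and $|\beta_{i,t}|$ are all bounded by a deterministic constant; this legitimizes the applications of \cite[Lemma B.26]{bai2004} and the rank-one bound \eqref{eq_bound_D}. The main obstacle — modest here but real — is bookkeeping the two different normalizations ($1/n$ versus $1/\ns$) that distinguish $F_{y_{(t)}}$ from $\tilde F_{y_{(t)}}$ and relate $m_t$ to $m_{1,t}$ and $m_{3,t}$: getting the factors of $t$, $s$, $y$, and $y_{(t)} = y/t$ consistent is where an error would most easily creep in. The actual probabilistic content (concentration of quadratic forms, rank-one stability of the resolvent) is entirely standard; the lemma is essentially an algebraic identity dressed in a routine concentration argument.
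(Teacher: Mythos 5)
Your proposal is correct and follows essentially the same route as the paper: Sherman--Morrison to peel off $\eta_i$ from $\D_t$, concentration of quadratic forms via \cite[Lemma B.26]{bai2004} on the event $\Omega_n$, and the \MP equation \eqref{eq_stieltjes_MP} to simplify the limit. Your intermediate expression $s\,y m_t/(1+y m_t)$ agrees with the paper's $s\,y m_t(1+\lambda_{k,t}y\su_t m_t)$ because $-\lambda_{k,t}\su_t = 1/(1+y m_t)$, and the identity $t m_t = (1+\lambda_{k,t}m_t)(1+y m_t)$ implied by \eqref{eq_stieltjes_MP} indeed yields the stated form $\tfrac{s}{t}y(1+\lambda_{k,t}m_t)$.
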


The statement of Lemma \ref{lem_tr_S_D} can be strengthened such that the negligible term does not depend on the sequential parameters $s,t$, which is needed in the proof of Theorem \ref{thm_main}. For convenience, we restrict ourselves to the case $s=t$ for the following lemma.

\begin{lemma}
    \label{lem_tr_S_D_unif}
    It holds that
    \begin{align*}
        \frac{1}{n} \tr \bfS_{2,t} \D_t 
        = y ( 1 + \lambda_{k,t} m_t) + o_{\PR} \lb n^{-1/2} \rb 
        = - y m_{1,t} + o_{\PR} \lb n^{-1/2} \rb ,
    \end{align*}
    where the $o_{\PR} \lb n^{-1/2} \rb $-term does not depend on $t\in [t_0,1].$ 
\end{lemma}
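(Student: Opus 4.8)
The goal is to upgrade Lemma~\ref{lem_tr_S_D} (specialized to $s=t$) so that the error term $o_{\PR}(n^{-1/2})$ holds uniformly over $t\in[t_0,1]$. The natural strategy is to work on the event $\Omega_n$ defined in \eqref{eq_support_lambda}, on which the resolvents $\D_t(\lambda_{k,t})$ and related matrices have spectral norms bounded by a deterministic constant uniformly in $n$ and $t$; recall $\PR(\Omega_n)\to1$, so it suffices to control the quantity on this event. First I would write the identity $\tfrac1n\tr\bfS_{2,t}\D_t = \tfrac1n\tr(\bfS_{2,t}-\lambda_{k,t}\bfI)\D_t + \lambda_{k,t}\tfrac1n\tr\D_t = \tfrac{p}{n} + \lambda_{k,t}\tfrac1n\tr\D_t$, which reduces everything to a uniform estimate for $\tfrac1n\tr\D_t(\lambda_{k,t}) = \tfrac pn\,\underline{s}_{n,t}(\lambda_{k,t})$, where $\underline{s}_{n,t}$ is the companion Stieltjes transform of $\bfS_{2,t}$. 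One then wants $\tfrac1n\tr\D_t = y\,m_t(\lambda_{k,t}) + o_{\PR}(n^{-1/2})$ uniformly, and the stated two expressions follow from \eqref{eq_stieltjes_MP} together with the algebraic identity $1+m_{1,t}(\lambda)=-\lambda m_{1,t}(\lambda)$ already noted in the paper and the relation between $m_t$ and $m_{1,t}$.

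The core is therefore a \emph{uniform} rate of convergence for the Stieltjes transform of $\bfS_{2,t}$ evaluated at the fixed spectral argument $\lambda_{k,t}$, which lies a fixed distance outside the support $I_t$ for all $t\in[t_0,1]$ (since $\lambda_{k,t}=\phi_t(\alpha_k)$ and $\alpha_k\notin[1-\sqrt{y_{(t_0)}},1+\sqrt{y_{(t_0)}}]$). At a point bounded away from the spectrum, the standard CLT-type analysis for linear spectral statistics (as in \cite{bai2004}, and in the sequential setting \cite{dornemann2021linear}) gives fluctuations of order $n^{-1}$ for $\tr\D_t$ around its deterministic equivalent $n y\,m_t$ — in particular $o_{\PR}(n^{-1/2})$ with room to spare. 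To make this uniform in $t$, I would either (i) invoke the increment/tightness estimates already developed for the sequential resolvent process (the same machinery underlying Theorem~\ref{thm_R_tight} and Lemma~\ref{lem_tr_S_D}), establishing a bound of the form $\E\,|\tfrac1n\tr\D_t - \tfrac1n\tr\D_s|^{2}\lesssim |t-s|$ on $\Omega_n$ plus a pointwise rate, and conclude uniform convergence by a chaining argument; or (ii) discretize $[t_0,1]$ into $O(n)$ points, control each point by the pointwise $O(n^{-1})$ (or $O(n^{-1}\log n)$) estimate via a union bound using the sub-exponential concentration of quadratic forms (Lemma B.26 of \cite{bai2004}, together with the truncation $|x_{ij}|<\sqrt n\delta_n$), and bridge between grid points using a Lipschitz bound in $t$ for $\tfrac1n\tr\D_t$ — the latter following since $\D_t$ changes by a rank $(\nt-\ns)$ perturbation of norm controlled on $\Omega_n$, and $\lambda_{k,t}=\phi_t(\alpha_k)$ is itself Lipschitz in $t$.

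A clean route that avoids re-deriving a CLT is to combine the pointwise statement of Lemma~\ref{lem_tr_S_D} (which already gives $\tfrac1n\tr\bfS_{2,t}\D_t = y(1+\lambda_{k,t}m_t)+\op$ for each fixed $t$, with the $\op$ of the sharper order $n^{-1}$ as follows from the underlying proof) with a maximal inequality over the $n$-point grid. Concretely: set $R_{n,t}:=\tfrac1n\tr\D_t(\lambda_{k,t}) - \tfrac pn m_t(\lambda_{k,t})$, decompose $\tr\D_t$ via the martingale decomposition $\sum_i(\E_i-\E_{i-1})\tr\D_t$ over the columns $\eta_1,\dots,\eta_{\nt}$, apply the Burkholder inequality together with the rank-one perturbation identity \eqref{eq_sher_mor} and Lemma B.26 to get $\E|R_{n,t}|^{2q}\lesssim n^{-2q}$ on $\Omega_n$ for a fixed $q>1$, hence $\PR(\max_{1\le\ell\le n}|R_{n,t_\ell}|>n^{-1/2})\lesssim n\cdot n^{2q}\cdot n^{-q}\to 0$ for $q>1$; finally a deterministic Lipschitz-in-$t$ bound for $t\mapsto\tfrac1n\tr\D_t(\lambda_{k,t})$ on $\Omega_n$, with constant independent of $n$, transfers the grid bound to all $t\in[t_0,1]$. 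The main obstacle is precisely this uniform-in-$t$ control: the pointwise arguments of \cite{bai2004,bai2008central} are not stated with rates or uniformity, so the work is in re-running the martingale/quadratic-form estimates while keeping every constant independent of $t$ — which is where the boundedness of resolvent norms on $\Omega_n$ and the uniform separation of $\lambda_{k,t}$ from $I_t$ (for $t\ge t_0$) are essential. Everything else is routine once the uniform Stieltjes-transform estimate is in hand.
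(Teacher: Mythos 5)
Your proposal is correct and shares the paper's core strategy — restrict to the event $\Omega_n$ where resolvent norms are bounded uniformly in $t$, and reduce everything to a uniform $o_{\PR}(n^{-1/2})$ rate for $\frac1n\tr\D_t(\lambda_{k,t})$ — but it differs in two worthwhile ways. First, your reduction via the exact identity $\frac1n\tr\bfS_{2,t}\D_t=\frac pn+\lambda_{k,t}\frac1n\tr\D_t$ (valid since $(\bfS_{2,t}-\lambda_{k,t}\bfI)\D_t=\bfI_p$) is cleaner than the paper's route, which instead revisits the Sherman--Morrison decomposition from the proof of Lemma \ref{lem_tr_S_D} and observes that the bounds \eqref{eq_bound_D} and \eqref{eq_bound_beta} are $t$-free; note, though, that your identity only handles $s=t$, which is all this lemma needs. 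Second, for uniformity in $t$ the paper proves $\sup_{t}\E\lb n^{-1/2}\tr(\D_t-\E\D_t)\rb^2=o(1)$ by a martingale decomposition and then invokes an external maximal-inequality result (Theorem 4.5 of \cite{dornemann2021linear}), whereas you make the uniformity self-contained via an $O(n)$-point grid, a union bound from $2q$-th moment bounds with $q>1$, and a deterministic Lipschitz bridge using that consecutive $\D_t$'s differ by a rank-one update plus an $O(1/n)$ shift of the spectral parameter on $\Omega_n$; both routes work, and yours is more explicit at the cost of more bookkeeping. Two small points to tighten: your union-bound exponent arithmetic is off (Markov gives $\PR(|R_{n,t_\ell}|>\epsilon n^{-1/2})\lesssim \epsilon^{-2q}n^{q}\cdot n^{-2q}$, hence $\epsilon^{-2q}n^{1-q}$ after the union bound — the conclusion for $q>1$ is still right), and your moment bound $\E|R_{n,t}|^{2q}\lesssim n^{-2q}$ silently requires the \emph{bias} $\frac1n\E\tr\D_t-\frac pn m_t=O(n^{-1})$ uniformly in $t$, which goes beyond the rate-free convergence in Lemma \ref{lem_conv_stieltjes}; you correctly flag that the pointwise literature must be re-run with rates, and this is exactly the content the paper outsources to the cited theorem.
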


The next results facilitate the calculation of $\theta_{s,t}$ in \eqref{def_theta}.
\begin{lemma}
    \label{lem_tr_S_D_S_D}
    For $s = s \wedge t, $ we have
    \begin{align*}
       &  \frac{1}{n} 
    \tr \bfS_{2,s} \lb \lambda_{k,s}\bfI - \bfS_{2,s} \rb\inv \bfS_{2,s} \lb \lambda_{k,t}\bfI - \bfS_{2,t} \rb\inv \\
    & = \frac{1}{n}   \tr \bfS_{2,s} \D_s \bfS_{2,s}\D_t \\
    & = s  \Bigg\{ y^2 m_s m_t + 
             \frac{s y m_s m_t}{1 - s y \lambda_{k,t} \lambda_{k,s} \su_s \su_t m_s m_t }   \\
    & \quad \times \left.         
            \left( 1 
            + y \lambda_{k,s} \su_s m_s 
            + y \lambda_{k,t} \su_t  m_t 
            + y^2 \lambda_{k,s} \lambda_{k,t} \su_s \su_t m_s m_t
            \right)
            \right) \Bigg\} \\ 
    & \quad \times  
             \lb 1 
            + y \lambda_{k,s} \su_s m_s 
            + y \lambda_{k,t} \su_t  m_t 
            + y^2 \lambda_{k,s} \lambda_{k,t} \su_s \su_t m_s m_t
            \rb + \op. 
    \end{align*}
\end{lemma}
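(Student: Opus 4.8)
The approach is to strip off every contribution governed by a single resolvent of $\bfS_{2,s}$ or $\bfS_{2,t}$, leaving one genuinely new object: a mixed two-resolvent trace with \emph{different} sample sizes and \emph{different} spectral arguments. First I would reduce algebraically. Since $\bfS_{2,s}$ and $\D_s$ commute, $\bfS_{2,s}\D_s=\bfI+\lambda_{k,s}\D_s$; since $s\le t$ one has $\bfS_{2,s}=\bfS_{2,t}-\bfS_{2,(s,t]}$, hence $\bfS_{2,s}\D_t=\bfI+\lambda_{k,t}\D_t-\bfS_{2,(s,t]}\D_t$. Multiplying out,
\begin{align*}
\frac1n\tr\bfS_{2,s}\D_s\bfS_{2,s}\D_t
&=\frac pn+\frac{\lambda_{k,t}}n\tr\D_t+\frac{\lambda_{k,s}}n\tr\D_s-\frac1n\tr\bfS_{2,(s,t]}\D_t\\
&\quad+\frac{\lambda_{k,s}\lambda_{k,t}}n\tr\D_s\D_t-\frac{\lambda_{k,s}}n\tr\D_s\bfS_{2,(s,t]}\D_t .
\end{align*}
The one-resolvent traces $\tfrac1n\tr\D_s,\tfrac1n\tr\D_t$ converge to their standard limits (expressed through the quantities in Lemma \ref{lem_formula_m_lambda}), and $\tfrac1n\tr\bfS_{2,(s,t]}\D_t=\tfrac pn+\tfrac{\lambda_{k,t}}n\tr\D_t-\tfrac1n\tr\bfS_{2,s}\D_t$, whose last term is exactly Lemma \ref{lem_tr_S_D}. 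So everything reduces to the two mixed traces $\tau_1:=\tfrac1n\tr\D_s\D_t$ and $\tau_2:=\tfrac1n\tr\D_s\bfS_{2,(s,t]}\D_t$.

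Next I would handle the mixed traces. For $\tau_2$, write $\bfS_{2,(s,t]}=\tfrac1n\sum_{\ns<i\le\nt}\eta_i\eta_i^\top$; for each such $i$ the vector $\eta_i$ is independent of $\D_s$, so only $\D_t$ needs a rank-one reduction: using \eqref{eq_sher_mor} to pass from $\D_t$ to $\D_{i,t}$ and \cite[Lemma B.26]{bai2004} to replace $\tfrac1n\eta_i^\top\bfM\eta_i$ by $\tfrac1n\tr\bfM$, one gets $\tau_2=(t-s)\,\beta\,\tau_1+\op$ with $\beta$ the deterministic limit of $\beta_{i,t}$. Thus the lemma hinges entirely on $\tau_1$. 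To compute $\tau_1$, condition on $\mathcal F_s=\sigma(\eta_1,\dots,\eta_{\ns})$; then $\D_s,\bfS_{2,s}$ are frozen and $\D_t=(\bfS_{2,(s,t]}+\bfS_{2,s}-\lambda_{k,t}\bfI)^{-1}$ is the resolvent of the sample covariance $\bfS_{2,(s,t]}$ of $\nt-\ns$ i.i.d.\ isotropic vectors, additively perturbed by the $\mathcal F_s$-measurable matrix $\bfS_{2,s}-\lambda_{k,t}\bfI$. Invoking the deterministic-equivalent calculus for sample covariance matrices with a deterministic additive perturbation (the resolvent-decomposition device of \cite{dornemann2021linear}), $\D_t$ may be replaced, inside traces against $\mathcal F_s$-measurable matrices of bounded norm, by a shifted resolvent $(\bfS_{2,s}-(\lambda_{k,t}-g)\bfI)^{-1}$ of $\bfS_{2,s}$, with a scalar $g=g_{s,t}$ fixed self-consistently. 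Consequently $\tau_1$ collapses, in the limit, to an integral against the limiting spectral distribution $F_{y_{(s)}}$ of $\bfS_{2,s}$ coupled with a scalar equation for $g$; since this system is affine in the relevant unknown, solving it produces precisely the denominator $1-(s\wedge t)\,y\,\lambda_{k,t}\lambda_{k,s}\,\su_s\su_t\,m_s m_t$ appearing in the statement.

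Finally I would assemble and simplify: substituting the limits of $\tfrac1n\tr\D_s$, $\tfrac1n\tr\D_t$, $\tfrac1n\tr\bfS_{2,s}\D_t$ (Lemma \ref{lem_tr_S_D}), $\tau_1$ and $\tau_2$ into the identity above and simplifying with Lemma \ref{lem_formula_m_lambda}, the Marchenko--Pastur relations \eqref{eq_stieltjes_MP}--\eqref{eq_m_mu}, the $m_{1,t}$--$m_t$ relation noted in the proof of Theorem \ref{thm_R_fidis}, and the factorization $1+y\lambda_{k,s}\su_s m_s+y\lambda_{k,t}\su_t m_t+y^2\lambda_{k,s}\lambda_{k,t}\su_s\su_t m_s m_t=(1+y\lambda_{k,s}\su_s m_s)(1+y\lambda_{k,t}\su_t m_t)$, one obtains the asserted form; all discarded terms are $\op$ and, in contrast to Lemma \ref{lem_tr_S_D_unif}, no uniformity in $s,t$ is required. \emph{The main obstacle} is $\tau_1=\tfrac1n\tr\D_s\D_t$ with $s\ne t$: unlike $s=t$ it is not an integral against a single limiting spectral distribution, and the conditioning argument forces one to push a \emph{random} (through $\bfS_{2,s}$), not sign-definite deterministic perturbation through the deterministic-equivalent machinery, integrate out its spectrum, and solve the resulting self-consistent scalar equation---this is where the denominator is generated---all while keeping the error terms $\op$ despite $\D_s$ and $\D_t$ sharing the randomness $\eta_1,\dots,\eta_{\ns}$, and finally compressing the rational expression into the stated compact form.
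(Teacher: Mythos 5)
Your proposal is correct in outline and reaches the same conclusion, but it organizes the reduction quite differently from the paper. The paper expands $\bfS_{2,s}=\tfrac1n\sum_{i\le\ns}\eta_i\eta_i^\top$ on both sides and works with the double sum $\tfrac{1}{n^3}\sum_{i,j\le\ns}\eta_i^\top\D_s\eta_j\,\eta_j^\top\D_t\eta_i$, splitting into a diagonal part $T_1$ and an off-diagonal part $T_2$, each peeled down by repeated Sherman--Morrison steps and quadratic-form concentration until $T_2$ collapses onto $T_3=\tfrac{1}{n^2}\sum_{j\le\ns}\eta_j^\top\D_s\D_t\eta_j$ and finally onto $\tfrac1n\tr\D_s\D_t$. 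Your exact resolvent identities $\bfS_{2,s}\D_s=\bfI+\lambda_{k,s}\D_s$ and $\bfS_{2,s}\D_t=\bfI+\lambda_{k,t}\D_t-\bfS_{2,(s,t]}\D_t$ shortcut this outer layer: they land you immediately on known one-resolvent traces (Lemma \ref{lem_tr_S_D} and \eqref{conv_stieltjes}) plus $\tau_1=\tfrac1n\tr\D_s\D_t$ and $\tau_2=\tfrac1n\tr\D_s\bfS_{2,(s,t]}\D_t$, and your treatment of $\tau_2$ is genuinely easier than the paper's $T_2$--$T_3$ cascade because the $\eta_i$ with $i>\ns$ are independent of $\D_s$, so only one leave-one-out is needed. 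The final rational simplification you defer is routine since your decomposition is an exact trace identity whose pieces all have identified limits.

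Both routes bottleneck at the same object, $\tau_1=\tfrac1n\tr\D_s\D_t$ with $s\neq t$, and this is the one place where your argument diverges from the paper in substance rather than bookkeeping: the paper (Lemma \ref{lem_conv_tr_DsDt}) uses the explicit decomposition $\D_s=-(\lambda_{k,s}-\tfrac{\ns}{n}b_s)^{-1}\bfI+b_s\bfA_s+\bfB_s+\bfC_s$ and closes a linear equation in $\tfrac1n\tr\D_s\D_t$ itself, whereas you propose conditioning on $\mathcal F_s$ and invoking a deterministic equivalent for the resolvent of $\bfS_{2,(s,t]}$ additively perturbed by the ($\mathcal F_s$-measurable) matrix $\bfS_{2,s}-\lambda_{k,t}\bfI$, then integrating the resulting two-point resolvent against the spectrum of $\bfS_{2,s}$ by partial fractions. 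That plan is viable (the perturbation has uniformly bounded norm on the event $\Omega_n$, and $\lambda_{k,t}-g$ stays separated from the support of $F_{y_{(s)}}$), but it is only asserted, not executed: you would still need to derive and solve the self-consistent equation for $g$, verify consistency with $\tfrac1n\tr\D_t\to ym_t$, and check that the outcome reproduces the denominator $1-sy\lambda_{k,s}\lambda_{k,t}\su_s\su_t m_s m_t$. That computation is the entire content of the paper's Lemma \ref{lem_conv_tr_DsDt}, so your write-up should be understood as a correct and arguably cleaner reduction wrapped around the same unavoidable core calculation, carried out with different machinery.
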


The following auxiliary result is needed for the proof of Lemma \ref{lem_tr_S_D_S_D}. 
\begin{lemma} \label{lem_conv_tr_DsDt}
It holds for $s = s \wedge t$ that
   \begin{align} \label{eq_conv_tr_DsDt}
        \frac{1}{n} \tr \D_{s} \D_t  
        & = 
        \frac{y m_s m_t}{1 - s y \lambda_{k,t} \lambda_{k,s} \su_s \su_t m_s m_t }
        + o_{\PR}(1).
    \end{align}

\end{lemma}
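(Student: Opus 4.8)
The plan is to derive a self-consistent linear equation for the limit $X$ of $\frac1n\tr\D_s\D_t$ by combining a resolvent identity with a leave-one-out expansion, and then to simplify the resulting expression using the defining equations \eqref{eq_stieltjes_MP} and \eqref{eq_m_mu}. Throughout I would assume $s=s\wedge t$, so that every $\eta_i$ entering $\bfS_{2,s}=\frac1n\sum_{i=1}^{\ns}\eta_i\eta_i^\top$ also enters $\bfS_{2,t}$; as elsewhere in the paper I work on the event $\Omega_n$ from \eqref{eq_support_lambda}, on which $\|\D_s\|$, $\|\D_t\|$ and the norms of the associated leave-one-out resolvents are uniformly bounded, and I use the truncation $|x_{ij}|\le\sqrt n\,\delta_n$.

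First I would use $\bfS_{2,s}\D_s=\bfI+\lambda_{k,s}\D_s$, i.e.\ $\D_s=\lambda_{k,s}\inv(\bfS_{2,s}\D_s-\bfI)$, to write $\frac1n\tr\D_s\D_t=\lambda_{k,s}\inv\lb \frac1n\tr\bfS_{2,s}\D_s\D_t-\frac1n\tr\D_t\rb$. Since $\frac1n\tr\bfS_{2,t}\D_t=p/n+\lambda_{k,t}\,\frac1n\tr\D_t$, Lemma~\ref{lem_tr_S_D_unif} (equivalently Lemma~\ref{lem_tr_S_D} with $s=t$) yields $\frac1n\tr\D_t=ym_t+\op$, and similarly $\frac1n\tr\D_s=ym_s+\op$. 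It remains to identify the limit of $\frac1n\tr\bfS_{2,s}\D_s\D_t=\frac1n\sum_{i=1}^{\ns}\frac1n\eta_i^\top\D_s\D_t\eta_i$. For each $i\le\ns$ I would apply the rank-one identities $\eta_i^\top\D_s=\beta_{i,s}\eta_i^\top\D_{i,s}$ and $\D_t\eta_i=\beta_{i,t}\D_{i,t}\eta_i$ (valid because $\D_{i,s},\D_{i,t}$ are independent of $\eta_i$), giving $\frac1n\eta_i^\top\D_s\D_t\eta_i=\beta_{i,s}\beta_{i,t}\,\frac1n\eta_i^\top\D_{i,s}\D_{i,t}\eta_i$. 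Then, conditionally on $\D_{i,s},\D_{i,t}$, the concentration bound \cite[Lemma B.26]{bai2004} replaces $\frac1n\eta_i^\top\D_{i,s}\D_{i,t}\eta_i$ by $\frac1n\tr\D_{i,s}\D_{i,t}$ up to $O_{\PR}(n^{-1/2})$; two further rank-one expansions give $\frac1n\tr\D_{i,s}\D_{i,t}=\frac1n\tr\D_s\D_t+O(n^{-1})$; and, since $\frac1n\eta_i^\top\D_{i,s}\eta_i=\frac1n\tr\D_s+\op=ym_s+\op$ (likewise for $t$), one has $\beta_{i,s}=(1+ym_s)\inv+\op$ and $\beta_{i,t}=(1+ym_t)\inv+\op$. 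Summing over $i\le\ns$ and using $\ns/n\to s$ yields $\frac1n\tr\bfS_{2,s}\D_s\D_t=\frac{s\wedge t}{(1+ym_s)(1+ym_t)}\cdot\frac1n\tr\D_s\D_t+\op$.

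Combining the two relations, the limit $X$ satisfies $\lambda_{k,s}X=\frac{s\wedge t}{(1+ym_s)(1+ym_t)}X-ym_t$, hence, after multiplying by $m_s(1+ym_s)(1+ym_t)$ and using $\lambda_{k,s}(1+ym_s)=s-y-m_s\inv$ (a rearrangement of \eqref{eq_stieltjes_MP}), one gets $X=ym_sm_t\big/\big(1-\tfrac{(s\wedge t)ym_sm_t}{(1+ym_s)(1+ym_t)}\big)$. Finally, \eqref{eq_stieltjes_MP} also gives $1+\lambda_{k,t}m_t=tm_t/(1+ym_t)$, and substituting this into \eqref{eq_m_mu} gives $\lambda_{k,t}\su_t=-(1+ym_t)\inv$ (and similarly for $s$); thus $(1+ym_s)\inv(1+ym_t)\inv=\lambda_{k,s}\lambda_{k,t}\su_s(\lambda_{k,s})\su_t(\lambda_{k,t})$, which turns the denominator into $1-(s\wedge t)\,y\,\lambda_{k,t}\lambda_{k,s}\su_s(\lambda_{k,s})\su_t(\lambda_{k,t})m_s(\lambda_{k,s})m_t(\lambda_{k,t})$, exactly \eqref{eq_conv_tr_DsDt}. (One also checks that this denominator is nonzero, equivalently $\lambda_{k,s}\ne(s\wedge t)(1+ym_s)\inv(1+ym_t)\inv$, which holds because $\lambda_{k,s}\notin I_s$, so the linear equation for $X$ is non-degenerate.)

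The main obstacle is the error-accumulation step: each individual replacement above costs $O_{\PR}(n^{-1/2})$ or $O(n^{-1})$ \emph{per index} $i$, so one has to verify that $\frac1n\sum_{i\le\ns}\beta_{i,s}\beta_{i,t}\zeta_i\to0$ in probability, where $\zeta_i$ is the $i$th per-term fluctuation. This I would do by a second-moment estimate: $\E\zeta_i^2=O(n^{-1})$ by \cite[Lemma B.26]{bai2004}, while for $i\ne j$ the cross-expectation $\E[\zeta_i\zeta_j]$ is of smaller order, because after additionally removing $\eta_j$ from the resolvents appearing in $\zeta_i$ (and $\eta_i$ from those in $\zeta_j$) the relevant quadratic forms become conditionally mean-zero and the leading contribution cancels; it is precisely here that the truncation and the bound $\|\D\|\le C$ on $\Omega_n$ are needed, and this is also where the a priori existence of the limit $X$ is pinned down together with uniqueness of the fixed point.
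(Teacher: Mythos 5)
Your proposal is correct, and it reaches the right self-consistent equation, but by a route that differs from the paper's. The paper imports the four-term resolvent decomposition $\D_s=-(\lambda_{k,s}-\tfrac{\ns}{n}b_s)^{-1}\bfI+b_s\bfA_s+\bfB_s+\bfC_s$ from \cite{dornemann2021linear, diss}, shows the $\bfB_s$ and $\bfC_s$ contributions to $\tfrac1n\tr(\D_s\D_t)$ are negligible, and extracts the fixed-point equation from the $\bfA_s$ term (the quantity $A_{1,s,t}$), with the deterministic equivalents expressed through $\E[\su_{n,s}]$, $\E[\su_{n,t}]$ and $b_s,b_t$. You instead start from the exact identities $\D_s=\lambda_{k,s}^{-1}(\bfS_{2,s}\D_s-\bfI)$ and $\eta_i^\top\D_s=\beta_{i,s}\eta_i^\top\D_{i,s}$, $\D_t\eta_i=\beta_{i,t}\D_{i,t}\eta_i$, which produces the same linear equation for the limit $X$ after the standard quadratic-form concentration and the replacements $\beta_{i,s}\to(1+ym_s)^{-1}=-\lambda_{k,s}\su_s$; your algebraic reduction via $1+\lambda m_t=tm_t/(1+ym_t)$ and $\lambda_{k,t}\su_t=-(1+ym_t)^{-1}$ is exactly the translation between your form of the denominator and the paper's. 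What your version buys is self-containedness (no appeal to the prior decomposition or to the negligibility of $\bfB_s,\bfC_s$); what the paper's buys is reuse of machinery already needed elsewhere and a uniform treatment of the mixed-time resolvent products. Two small remarks: your worry about cross-terms $\E[\zeta_i\zeta_j]$ is more caution than necessary, since the errors enter as $\tfrac1n\sum_{i\le\ns}\zeta_i$ with $\E|\zeta_i|=O(n^{-1/2})$ uniformly, so a first-moment bound already gives $o_{\PR}(1)$; and the non-degeneracy of the linear equation deserves the explicit check $\bigl|sy\lambda_{k,s}\lambda_{k,t}\su_s\su_tm_sm_t\bigr|<1$ (the paper cites $|a_n(s,t)|<1$ from \cite{diss}), which your remark that $\lambda_{k,s}\notin I_s$ gestures at but does not quite complete.
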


\begin{appendix}
\section{A functional CLT for sesquilinear forms}
The following result is crucial for investigating the asymptotic behavior of $(\bfR_{n,t})$. Since it might be of independent interest, we state it in a general form. For this purpose, we consider a sequence $(\bfx_n,\bfy_n)_{n\in\N}$ of i.i.d. centered random vectors in $\R^k\times \R^k$, where $k\in\N$ is a fixed parameter. Furthermore, we denote for $n\in\N$, $t\in [0,1]$ and $1 \leq l \leq k$
\begin{align*}
    \bfx_n = \lb x_{1i},\ldots, x_{ki}\rb^\top \in \R^k, ~ 
    \bfx(l,t) = \lb x_{l1}, \ldots, x_{l\nt}\rb^\top \in \R^{\nt}. 
\end{align*}
Similarly, we define $\bfy_1, \ldots, \bfy_n$ and $\bfy(1,t), \ldots, \bfy(k,t).$ Let $\rho(l) = \E [ x_{l1} y_{l1}]$ for $1 \leq l \leq k.$
\begin{theorem} \label{thm_functional_clt}
For each $t\in [0,1]$ and $l\in \{1, \ldots, k\}$ let $(\bfB_{n,t}(l))_{n\in\N}$ be a sequence of $\nt \times \nt$ symmetric matrices.
\begin{enumerate}
    \item  We assume that for all $s,t\in [0,1], 1 \leq l, m \leq k$, the following limits exist
\begin{align}
    \omega_{s,t,l,m} & =\lim_{n\to\infty} \frac{1}{n} 
    \sum_{i=1}^{\ns \wedge \nt} 
    \lb \bfB_{n,s}(l) \rb_{ii}
     \lb \bfB_{n,t}(m) \rb_{ii}, \label{omega}
    \\
    \theta_{s,t,l,m} & = \lim_{n\to\infty} \frac{1}{n} 
    \sum_{i,j=1}^{\ns \wedge \nt} \lb \bfB_{n,s} (l) \rb_{ij}
     \lb \bfB_{n,t} (m) \rb_{ji} \label{theta}
    .
\end{align}
Then, the finite-dimensional distributions of 
\begin{align*}
    U_{l,t}
    = \frac{1}{\sqrt{n}} \lb \bfx(l,t)^\top \bfB_{n,t}(l) \bfy(l,t)
    - \rho (l) \tr \bfB_{n,t}(l) \rb , ~ t\in [0,1], ~ 1 \leq l \leq k,
\end{align*}
converge to the finite-dimensional distributions of $(W_{l,t})_{ t\in [0,1], 1 \leq l \leq k}$, whose marginals are zero-mean Gaussians with covariance structure 
\begin{align*}
    \cov (U_{l,s}, U_{m,t} ) 
    = \omega_{s,t,l,m} M_{1,l,m}
    +  \lb \theta_{s,t,l,m} - \omega_{s,t,l,m} \rb \lb  M_{2,l,m} + M_{3,l,m} \rb 
\end{align*}
for $s,t\in [0,1], 1 \leq l, m \leq k$, where
\begin{align*}
    M_{1,l,m}  & = \E [ x_{l1} y_{l1} x_{m1} y_{m1} ] - \rho(l) \rho(m) , \\
    M_{2,s,t,l,m} & =   \E [ x_{l1}  x_{m1} ]
    \E[ y_{l1} y_{m1} ],
    \\ 
    M_{3,l,m} & = \E [ x_{l1} y_{m1} ]
    \E[ x_{m1} y_{l1} ]. 
\end{align*}
\item  Let $t_0 \in (0,1).$ We assume that the limits in \eqref{omega} and \eqref{theta} exist for all $s,t\in [t_0,1].$ Moreover, if 
\begin{align}  \label{mom_cond_tightness}
      \E |  U_{l,t} - U_{l,s}   |^b 
      \leq K \left\{  \lb \frac{\nt - \ns }{n} \rb^{1+\delta} +  \frac{\nt - \ns }{n^{1+\delta'}} \right\} 
\end{align}
for some $b, \delta, \delta' >0$, all $t_0 \leq s \leq t \leq 1$ and some constant $K>0$ independent of $n,s,t$,
then $(U_{l,t} )_{t\in[t_0,1]}$ is asymptotically tight, and we have for $n\to\infty$
\begin{align*}
    (U_{l,t} )_{t\in[t_0,1], 1\leq l \leq k} 
    \rightsquigarrow (W_{l,t} )_{t\in[t_0,1], 1\leq l \leq k}
    \textnormal{ in } \lb \ell^\infty ([t_0,1]) \rb^k .
\end{align*}
\end{enumerate}
\end{theorem}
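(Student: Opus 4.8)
The plan is to treat the two assertions separately. For part (1), fix finitely many pairs $(l_1,t_1),\dots,(l_q,t_q)$; by the Cram\'er--Wold device it suffices to show that $T_n:=\sum_{r=1}^q a_r U_{l_r,t_r}$ is asymptotically normal for every $(a_r)\in\R^q$. Expanding the sesquilinear forms, $T_n=\frac{1}{\sqrt n}\sum_r a_r\sum_{i,j\le \lfloor n t_r\rfloor}(\bfB_{n,t_r}(l_r))_{ij}\,(x_{l_r i}y_{l_r j}-\rho(l_r)\delta_{ij})$, and $\E T_n=0$ since $\E[x_{li}y_{lj}]=\rho(l)\delta_{ij}$. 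I would then introduce the filtration $\mathcal{F}_m=\sigma\{(\bfx_i,\bfy_i):1\le i\le m\}$ (filtering by the \emph{pairs}, since $\bfx_i$ and $\bfy_i$ may be correlated) and write $T_n=\sum_{m=1}^n\gamma_{n,m}$, $\gamma_{n,m}=(\E[\,\cdot\mid\mathcal{F}_m]-\E[\,\cdot\mid\mathcal{F}_{m-1}])T_n$. A short computation shows $\gamma_{n,m}$ collects exactly the ``diagonal'' terms $i=j=m$ and the ``wing'' terms $\max(i,j)=m$, and that $(\gamma_{n,m})_{m\le n}$ is a martingale difference array; the martingale central limit theorem then applies.

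Two conditions must be verified. After the same truncation of the entries of $\bfx_i,\bfy_i$ used in the main proof, a routine fourth-moment estimate gives $\sum_{m\le n}\E|\gamma_{n,m}|^4\to 0$, yielding the conditional Lindeberg condition. For the limiting variance I would first compute $\Var(T_n)$ directly: expanding $\cov(U_{l,s},U_{m,t})$ (say $s\le t$) as a quadruple sum over sample indices $i,j\le\ns$, $i',j'\le\nt$, the only surviving configurations are (a) $i=j=i'=j'$, which yields $\omega_{s,t,l,m}M_{1,l,m}$ via the definition of $\omega$; and (b) the pairings $\{i=i'\ne j=j'\}$ and $\{i=j'\ne j=i'\}$, which — using symmetry of $\bfB_{n,t}(l)$ and the definitions of $\theta$ and $\omega$ — yield $(\theta_{s,t,l,m}-\omega_{s,t,l,m})M_{2,l,m}$ and $(\theta_{s,t,l,m}-\omega_{s,t,l,m})M_{3,l,m}$; every configuration in which some sample index occurs only once vanishes by centering, and the configuration $i=j\ne i'=j'$ is cancelled by the subtracted $\rho(l)\rho(m)\delta_{ij}\delta_{i'j'}$. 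Hence $\Var(T_n)\to\sigma^2$, the quadratic form in $(a_r)$ determined by the covariance kernel in the statement. Finally one checks $\Var\big(\sum_m\E[\gamma_{n,m}^2\mid\mathcal{F}_{m-1}]\big)\to0$, so $\sum_m\E[\gamma_{n,m}^2\mid\mathcal{F}_{m-1}]\conp\sigma^2$; the martingale CLT gives $T_n\cond N(0,\sigma^2)$, and Cram\'er--Wold delivers the asserted convergence of finite-dimensional distributions.

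For part (2), fix $l$. Since $t\mapsto U_{l,t}$ is constant on each $[j/n,(j+1)/n)$, every supremum over $t$ below is a maximum over finitely many grid points and hence measurable; also $\sup_n\Var(U_{l,t})<\infty$, so each $U_{l,t}$ is tight in $\R$. By \cite[Theorems~1.5.4 and~1.5.7]{vandervaart1996} it remains to show asymptotic equicontinuity in probability, i.e.\ $\lim_{\delta\to0}\limsup_n\PR\big(\sup_{|s-t|\le\delta}|U_{l,t}-U_{l,s}|>\epsilon\big)=0$ for every $\epsilon>0$. I would partition $[t_0,1]$ into $N\lesssim 1/\delta$ blocks of length $\le\delta$, bound the probability by a sum over (slightly enlarged) blocks $I$ of $\PR\big(\sup_{s,t\in I}|U_{l,t}-U_{l,s}|>\epsilon/2\big)$, and, on a block with $L\lesssim n\delta$ grid points, apply a dyadic-bisection maximal inequality fed by \eqref{mom_cond_tightness}. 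The decisive point is that this chaining treats the superadditive part $(L/n)^{1+\delta}$ of the right-hand side of \eqref{mom_cond_tightness} (which sums geometrically over dyadic levels with no loss) separately from its additive part $L/n^{1+\delta'}$ (which accrues only a factor $(\log n)^b$), giving $\E\big[\sup_{s,t\in I}|U_{l,t}-U_{l,s}|^b\big]\lesssim (L/n)^{1+\delta}+(\log n)^b\,L/n^{1+\delta'}$. Summing over the $N$ blocks and applying Markov's inequality yields $\PR(\cdots)\lesssim\epsilon^{-b}\{\delta^{\delta}+(\log n)^b n^{-\delta'}\}$; letting $n\to\infty$ removes the second term and $\delta\to0$ the first. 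Together with part (1) this gives $(U_{l,t})_{t\in[t_0,1],\,1\le l\le k}\rightsquigarrow(W_{l,t})_{t\in[t_0,1],\,1\le l\le k}$ in $(\ell^\infty([t_0,1]))^k$.

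I expect the main obstacle to be the exact identification of the limiting (conditional) variance in part (1): one has to disentangle the diagonal from the off-diagonal index contributions, match them against $\omega_{s,t,l,m}M_{1,l,m}$ versus $(\theta_{s,t,l,m}-\omega_{s,t,l,m})(M_{2,l,m}+M_{3,l,m})$, and keep track of the cross-coordinate moments $\E[x_{l1}x_{m1}]$, $\E[y_{l1}y_{m1}]$, $\E[x_{l1}y_{m1}]$ that couple distinct indices $l\ne m$. A secondary technical point is to arrange the chaining in part (2) so that the logarithmic factor forced by the linear term $(\nt-\ns)/n^{1+\delta'}$ in \eqref{mom_cond_tightness} stays confined to that (vanishing) term and does not contaminate the dominant $(\nt-\ns)/n^{1+\delta}$ contribution.
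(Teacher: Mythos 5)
Your proposal is correct and follows essentially the same route as the paper: part (1) is the standard martingale CLT for sesquilinear forms that the paper obtains by citing Corollary 2.2 of Wang--Yao (qinwen2014joint) without reproducing the argument, and your combinatorial identification of the $\omega_{s,t,l,m}M_{1,l,m}$ and $(\theta_{s,t,l,m}-\omega_{s,t,l,m})(M_{2,l,m}+M_{3,l,m})$ contributions matches the paper's covariance computation; part (2) rests, exactly as in the paper, on the piecewise constancy of $t\mapsto U_{l,t}$ on the grid of mesh $1/n$ together with a blockwise maximal inequality that tracks the terms $\lb (\nt-\ns)/n\rb^{1+\delta}$ and $(\nt-\ns)/n^{1+\delta'}$ separately before letting $n\to\infty$ and then the block width tend to zero. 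The only cosmetic difference is that the paper feeds the increment bound into a packaged min-of-increments maximal inequality (Corollary A.4 of the cited Dette et al.\ reference, in the spirit of Billingsley's Theorem 10.4), whereas you chain dyadically by hand and absorb the resulting $(\log n)^{b}$ factor into the vanishing $n^{-\delta'}$ term.
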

\begin{proof}[Proof of Theorem \ref{thm_functional_clt}]
   The first part is a slight generalization of \cite[Corollary 2.2]{qinwen2014joint}. The crucial point in our model, compared to the previous work \cite{qinwen2014joint} is that the definition of the matrices $\bfB_{n,t}(l)$ and of the random vectors $\bfx(l,t)$, $\bfy(l,t)$ depends on the sequential parameter $t$ and their dimensions vary with $t$. However, the proof is very similar to \cite{qinwen2014joint}. For the sake of brevity, we omit the details. 
   For the second part, we note that it suffices to show that $(U_{l,t} )_{t\in[t_0,1], 1\leq l \leq k}$ is asymptotically tight in the space $\ell^\infty([t_0,1])$ (see \cite[Lemma 1.4.3, Theorem 1.5.4]{vandervaart1996}). 
   For this purpose, we show that the conditions of \cite[Corollary A.4]{dette2019determinants} are satisfied.
   Define for $t_0 \leq r \leq s \leq t $
   \begin{align*}
       m(r,s,t) = \min \left\{ | U_{l,t} - U_{l,s} | , | U_{l,r} - U_{l,s} | 
       \right\} .
   \end{align*}
   Then, we have for $\eta > 0$, 
   \begin{align*}
       & \PR ( m(r,s,t) > \eta ) \leq 
       \PR (| U_{l,t} - U_{l,s} | > \eta ) + \PR (| U_{l,r} - U_{l,s} |  > \eta )
       \\ & \leq \frac{1}{\eta^b} \E | U_{l,t} - U_{l,s} |^b + \frac{1}{\eta^b} \E | U_{l,r} - U_{l,s} |^b 
   \end{align*}
   In the case $t -r \geq 1/n$, we further estimate using \eqref{mom_cond_tightness} and the basic inequality $x -1 \leq \lfloor x \rfloor \leq x +1$ for all $x\in \R$
   \begin{align*}
       \E | U_{l,t} - U_{l,s} |^b \lesssim \lb \frac{\nt - \ns }{n} \rb^{1+\delta} +  \frac{\nt - \ns }{n^{1+\delta'}}
       \lesssim \lb t -r \rb ^{1+\varepsilon} 
   \end{align*}
   where $\varepsilon = \delta \vee \delta',  $ and similarly,
   \begin{align*}
       \E | U_{l,t} - U_{l,s} |^b \lesssim \lb t -r \rb ^{1+\varepsilon}.
   \end{align*}
   In the other case $t -r < 1/n$, we have $\lfloor nr \rfloor = \ns$ or $\ns = \nt$, and thus, $m(r,s,t)=0$ in this case. In summary, we obtain the inequality
   \begin{align*}
       \PR ( m(r,s,t) > \eta) \lesssim \frac{1}{\eta^b} \lb t -r \rb ^{1+\varepsilon}.
   \end{align*}
   Moreover, we have
   \begin{align*}
       \PR ( | U_{l,t} - U_{l,s} | > \eta) 
       \leq \frac{1}{\eta^b} \E | U_{l,t} - U_{l,s} |^b
       \lesssim \frac{1}{\eta^b} \left\{  \lb t - s + \frac{1}{n} \rb^{1+\delta} + \frac{ t- s + \frac{1}{n} }{n^{\delta'}}  \right\} .
   \end{align*}
Let $m\in\N$ and 
\begin{align*}
    K_j = \left[ \frac{j - 1}{m} , \frac{j}{m}  \right], \quad
    \lfloor m t_0 \rfloor \leq j \leq m. 
\end{align*}
Combining the inequalities above, we are able to apply \cite[Corollary A.4]{dette2019determinants}, which gives
\begin{align*}
    \PR \lb \sup_{s,t \in K_j} | U_{l,t} - U_{l,s} | > \eta \rb 
    \lesssim \frac{1}{m^{1+\varepsilon}} +  \lb \frac{1}{m} + \frac{1}{n} \rb^{1+\delta} + \frac{ \frac{1}{m}+ \frac{1}{n} }{n^{\delta'}}.
\end{align*}
This implies
\begin{align*}
    \limsup_{n\to\infty} \PR \lb \sup_{ \lfloor mt_0 \rfloor \leq j \leq m} \sup_{s,t \in K_j} | U_{l,t} - U_{l,s} | > \eta \rb 
    \lesssim \frac{1}{m^{\varepsilon}} \to 0 , \textnormal{ as } m\to\infty.
\end{align*}
Since the finite-dimensional distributions of $(U_{l,t})$ converge weakly by assumption, \cite[Theorem 1.5.6]{vandervaart1996} yields the asymptotic tightness of $(U_{l,t}).$ 
\end{proof}

\begin{remark}
The proof of Theorem \ref{thm_functional_clt} shows that the moment condition \eqref{mom_cond_tightness} can be replaced by the following slightly more general condition: If the increments can be decomposed as $U_{l,t} - U_{l,s} = Z_{1,l,s,t} + Z_{2,l,s,t}$, then \eqref{mom_cond_tightness} can be replaced by  
\begin{align} \label{mom_cond_tightness2}
    \E | Z_{1,l,s,t} |^b \vee \E | Z_{2,l,s,t} |^{b'} & \leq K \left\{  \lb \frac{\nt - \ns }{n} \rb^{1+\delta} +  \frac{\nt - \ns }{n^{1+\delta'}} \right\}
\end{align}
for some $b,b',\delta, \delta'>0. $
\end{remark} 

\section{Proofs of Auxiliary Results} \label{sec_proof_aux}

\begin{proof}
    [Proof of Lemma \ref{lem_formula_m_lambda}]
    From \cite[Lemma B.1]{paul2007asymptotics}, we have 
    \begin{align*}
        \frac{t}{\alpha_{k} - t} 
        = \frac{1}{ \frac{1}{t} \alpha_{k} - 1}
        = \int \frac{x}{ \frac{1}{t} \alpha_{k} - x} d \tilde F_{y_{(t)}} (x)
        =  \int \frac{x}{ \alpha_{k} - x} d  F_{y_{(t)}} (x)
        = m_{1,t}(\lambda_{k,t}).
    \end{align*}
    The expression for $m_{3,t}(\lambda_{k,t})$ follows similarly from \cite[Lemma B.2]{paul2007asymptotics}. 
    Using \eqref{eq_m_mu} and
    \begin{align*}
        m_{1,t}(\lambda_{k,t} ) = 
        - 1 - \lambda_{k,t} m_t(\lambda_{k,t}), 
    \end{align*}
    the proof of Lemma \ref{lem_formula_m_lambda} concludes. 
\end{proof}
We continue with preparations for the proofs of the remaining lemmas. For this purpose, let $\su_{n,t}(\lambda_{k,t}) = \su_{n,t}$ denote the Stieltjes transform of the companion matrix $\underline{\bfS}_{2,t} = \bfX_{2,t}^\top \bfX_{2,t} \in \R^{\nt \times \nt}$. Then, we have the following auxiliary result. 
    \begin{lemma} \label{lem_conv_stieltjes}
    For $t\in [t_0,1]$, we have
    $$
    \E [ \su_{n,t}(\lambda_{k,t})] \to \su_t (\lambda_{k,t}), \quad n\to\infty. 
    $$
    \end{lemma}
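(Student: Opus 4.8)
The plan is to reduce the statement to the classical Marchenko--Pastur and Bai--Yin theorems, via the companion-matrix identity relating the spectra of $\underline{\bfS}_{2,t}=\bfX_{2,t}^\top\bfX_{2,t}$ and $\bfS_{2,t}=\bfX_{2,t}\bfX_{2,t}^\top$. These two matrices share all their nonzero eigenvalues, and since $p<\nt$ eventually (because $p/\nt\to y_{(t)}<1$), for every $\lambda$ outside the spectrum of $\bfS_{2,t}$ one has the exact identity
\begin{align*}
    \su_{n,t}(\lambda)=\frac{1}{\nt}\tr\bigl(\underline{\bfS}_{2,t}-\lambda\bfI_{\nt}\bigr)\inv
    =\frac{p}{\nt}\cdot\frac{1}{p}\tr\bigl(\bfS_{2,t}-\lambda\bfI_p\bigr)\inv-\frac{\nt-p}{\nt}\cdot\frac{1}{\lambda}.
\end{align*}
The matrix $\bfS_{2,t}=\tfrac1n\sum_{i=1}^{\nt}\eta_i\eta_i^\top$ is a rescaled sample covariance matrix whose entries have finite fourth moment and with $p/\nt\to y_{(t)}\in(0,1)$, so its empirical spectral distribution converges almost surely to $F_{y_{(t)}}$ and, by Bai--Yin, its extreme eigenvalues converge almost surely to the endpoints of $I_t$. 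Since $\lambda_{k,t}=\phi_t(\alpha_k)\notin I_t$ (this is exactly the supercriticality assumption on $\alpha_k$), it follows that $\tfrac1p\tr(\bfS_{2,t}-\lambda_{k,t}\bfI_p)\inv\to m_t(\lambda_{k,t})$ almost surely.

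Plugging this into the identity above, together with $p/\nt\to y_{(t)}$ and $(\nt-p)/\nt\to 1-y_{(t)}$, yields
\begin{align*}
    \su_{n,t}(\lambda_{k,t})\longrightarrow y_{(t)}\,m_t(\lambda_{k,t})-\frac{1-y_{(t)}}{\lambda_{k,t}}=\su_t(\lambda_{k,t})\qquad\text{almost surely},
\end{align*}
where the last equality is precisely relation \eqref{eq_m_mu} defining $\su_t$.

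It then remains to upgrade almost sure convergence to convergence in expectation, which is the only point requiring a little care, since $\lambda_{k,t}$ is real and the resolvent is not globally bounded there. For this I would work on the event $\Omega_n$ from the proof of Theorem \ref{thm_main} (equivalently, on the event that all eigenvalues of $\bfS_{2,t}$ lie in a fixed compact neighborhood of $I_t$, which has probability tending to $1$ by Bai--Yin): on this event $\lambda_{k,t}$ is at a positive distance from the spectrum of $\underline{\bfS}_{2,t}$ and from $0$, so $|\su_{n,t}(\lambda_{k,t})|$ is bounded by a deterministic constant depending only on $t,t_0,y$, and bounded convergence converts the almost sure limit into the claimed convergence of expectations; alternatively one may simply invoke the standard deterministic-equivalent estimate for the expected Stieltjes transform of a sample covariance matrix, valid uniformly on compact subsets of the complement of the support. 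Neither ingredient is a substantial obstacle --- the Marchenko--Pastur and Bai--Yin theorems are classical --- so the only thing to be attentive to is that they are applied to the companion matrix through the spectral identity rather than directly, and that the truncation already in force (from the proof of Theorem \ref{thm_main}) is what allows the fourth-moment hypothesis to suffice.
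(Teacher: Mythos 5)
Your route is genuinely different from the paper's: you work directly on the real axis, passing through the companion-matrix identity, the Mar\v cenko--Pastur theorem and Bai--Yin to get \emph{almost sure} convergence of $\su_{n,t}(\lambda_{k,t})$, and then try to upgrade to convergence of expectations. The paper instead never touches the real point directly at first: it takes the known convergence of $\E[\su_{n,t}(z)]$ for $z\in\mathbb{C}^+$ (Bai--Zhou), evaluates it along $z_n=\lambda_{k,t}+i\eta_n$ with $\eta_n$ decaying polynomially, and transfers to $\lambda_{k,t}$ using the bound $\PR(\lambda_1(\bfS_{2,t})\geq \lambda_{k,t}+\varepsilon)=o(n^{-l})$ for every $l$. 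The a.s.\ convergence part of your argument is fine; the problem is the last step.

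The gap is exactly the point the paper flags before its proof: controlling the expectation against outliers. Bounded convergence requires an almost sure dominating bound, not a bound that holds merely on an event $\Omega_n$ with $\PR(\Omega_n)\to1$. Writing
\begin{align*}
\E[\su_{n,t}(\lambda_{k,t})]=\E[\su_{n,t}(\lambda_{k,t})\,\mathbf{1}_{\Omega_n}]+\E[\su_{n,t}(\lambda_{k,t})\,\mathbf{1}_{\Omega_n^c}],
\end{align*}
the first term converges as you say, but the second is not controlled by $\PR(\Omega_n^c)\to0$ alone: on $\Omega_n^c$ an eigenvalue of $\underline{\bfS}_{2,t}$ may sit arbitrarily close to the real point $\lambda_{k,t}$, so $|\su_{n,t}(\lambda_{k,t})|$ has no deterministic bound there, and a Cauchy--Schwarz argument would require a moment bound on the resolvent at a real argument, which is not available a priori (it is not even immediate that $\E|\su_{n,t}(\lambda_{k,t})|<\infty$). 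This is precisely why the paper regularizes with $z_n=\lambda_{k,t}+i\eta_n$: there $|\su_{n,t}(z_n)|\leq 1/\eta_n$ deterministically, and the polynomial growth of $1/\eta_n$ is beaten by the superpolynomial decay $o(n^{-l})$ of the bad event, after which only the difference $\su_{n,t}(z_n)-\su_{n,t}(\lambda_{k,t})$ on the good event remains, and that is bounded since the spectrum is then at fixed distance from $\lambda_{k,t}$. Your alternative suggestion (a ``standard deterministic-equivalent estimate uniformly on compacts outside the support'') is essentially asking for this same regularization-plus-tail-bound argument, or for a local law, neither of which is off-the-shelf classical at a real spectral parameter; so as written the final step needs to be replaced by the paper's complex-approximation device or an equivalent one.
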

    From the standard outward appearance of the convergence provided in Lemma \ref{lem_conv_stieltjes}, its result may seem to follow immediately from well-known results on the convergence of the Stieltjes transform such as \cite{baizhou2008}.
    However, it is important to point out that these results typically guarantee the convergence of the expected Stieltjes transform only for arguments in the upper half plane $\mathbb{C}^+$, which are naturally well separated from the spectrum of $\underline \bfS_{2,t}$. 
    Thus, an additional argument is needed to extend  rigorously to $\E [\su_{n,t} (\lambda_{k,t})]$ to control for outliers in the spectrum of the sample covariance matrix that may potentially cause a singularity. 
    
\begin{proof}[Proof of Lemma \ref{lem_conv_stieltjes}]
        For all $z\in\mathbb{C}^+,$ we have from \cite{baizhou2008},
        \begin{align*}
            \E [ \su_{n,t} (z ) ] \to \su_t (z) = o(1), \quad n\to\infty. 
        \end{align*}
        Using \cite[Problem 8, p.18]{billingsley1999}, this can be extended to the uniform convergence
        \begin{align*}
            \sup_{z\in\mathbb{C}^+} 
            \left| \E [ \su_{n,t} (z ) ] -  \su_t (z) \right| = o(1), \quad n\to\infty. 
        \end{align*}
        Let $\eta_n \to 0$ sufficiently slowly, e.g., $\eta_n = \mathcal{O}\lb n^{-m} \rb$ for some fixed $m\in\N.$
        This implies for $z_n = \lambda_{k,t} + i \eta_n$ 
        \begin{align} \label{f1}
            \E [ \su_{n,t} (z_n ) ] -  \su_t (z_n) = o(1), \quad n\to\infty. 
        \end{align} 
        Since $\lambda_{k,t} \notin \operatorname{supp} (\underline F^{y, H}),$ it follows that 
        \begin{align} \label{f2}
            \su_t (z_n) = \su_t(\lambda_{k,t}) + o(1), \quad n\to\infty. 
        \end{align}
        Next, we get from \cite[(9.7.8)]{bai2004} for some sufficiently small $\varepsilon>0$
        \begin{align*}
            \PR \lb \lambda_1 (\bfS_{2,t} ) \geq  \lambda_{k,t} + \varepsilon \rb = o\lb n ^{-l} \rb 
        \end{align*}
        for all $l\in\N.$
        Thus, we have 
        \begin{align}
             \E \left| \su_{n,t} (z_n) - \su_{n,t}(\lambda_{k,t})  \right|   
           & = \E \left[ \left| \su_{n,t} (z_n) - \su_{n,t}(\lambda_{k,t}) \right| 
            I \{ \lambda_1 (\bfS_{2,t} ) < \lambda_{k,t} + \varepsilon \} 
            \right] \nonumber \\ & \quad 
            + \E \left[  \left| \su_{n,t} (z_n) - \su_{n,t}(\lambda_{k,t}) \right|  I \{ \lambda_1 (\bfS_{2,t} ) \geq  \lambda_{k,t} + \varepsilon \} 
            \right] \nonumber 
            \\ & = o(1).  \label{f3}
        \end{align}
        A combination of  \eqref{f1}, \eqref{f2} and \eqref{f3} concludes the proof. 
    \end{proof}
    In the following, we may assume without loss of generality that the spectrum of $\bfS_{2,t}$ is contained in the interval $I_t$ with probability $1$ (recall \eqref{eq_support_lambda}).
 Thus, we have  
    \begin{align} \label{eq_bound_D}
       \| \D_t \|\leq \frac{1}{\textnormal{dist}(I_t, \lambda_{k,t}) }
       \leq \frac{1}{ \min ( \textnormal{dist}(I_{t_0}, \lambda_{k,t_0}) , \textnormal{dist}(I_{t_0}, \lambda_{k,1}) )  }
       \lesssim 1,
    \end{align}
    with probability $1$, where the bound on the right-hand side does not depend on $t$. Analogous estimates are available for similarly defined matrices like $\D_{i,t}, \D_{i,j,t}$. 
    Moreover, by distinguishing the cases $\lambda_{k,t} < ( 1 - \sqrt{y_{(t)}})^2$ and $\lambda_{k,t} > ( 1 + \sqrt{y_{(t)}})^2$,  we have the estimate 
    \begin{align} \label{eq_bound_beta}
        |\beta_{i,t}(\lambda_{k,t})| =  \frac{1}{|\lambda_{k,t}| \left|  1/\lambda_{k,t} + n\inv \eta_i^\top \lb \bfS_{2,t} / \lambda_{k,t} - \bfI \rb\inv \eta_i \right|  }
        \leq 1.
    \end{align} 
 
    This implies 
    \begin{align*}
        \E |  \beta_{1,t} - \E [\beta_{1,t}] |^2 
        \lesssim 
        \E \left| n\inv  \lb \eta_1^\top \D_1 \eta_1 - \tr \D_1 \rb \right|^2 
        + \E \left| n\inv \lb \tr \D_1 - \tr \E \D_1 \rb \right|^2 = o(1). 
    \end{align*}
    Here, we used \cite[Lemma B.26]{bai2004} for the first summand and a martingale decomposition for the second summand. Similarly to \cite[Lemma 7.1.3]{diss}, we can show
    \begin{align*}
        \lambda_{k,t} \su_{n,t} (\lambda_{k,t}) = - \frac{1}{\nt} \sum\limits_{j=1}^{\nt} \beta_{j,t}(\lambda_{k,t})
    \end{align*}
    for $\lambda \notin \lb I_t \cup \{ 0\}\rb $. 
    Recall that $\su_{n,t}(\lambda_{k,t}) = \su_{n,t}$ denotes the Stieltjes transform of the companion matrix $\underline{\bfS}_{2,t} = \bfX_{2,t}^\top \bfX_{2,t} \in \R^{\nt \times \nt}$.
    This implies 
    \begin{align} \label{eq_beta_stieltjes}
        \lambda_{k,t} \E [  \su_{n,t} (\lambda_{k,t}) ] = - \E [ \beta_{j,t}(\lambda_{k,t}) ]
        \textnormal{ and } 
        \E \lb \beta_{1,t} (\lambda_{k,t}) + \lambda_{k,t} \E [ \su_{n,t}(\lambda_{k,t}) ] \rb^2 = o(1). 
    \end{align}
    Then, Lemma \ref{lem_conv_stieltjes} guarantees that $\E [ \su_{n,t}(\lambda_{k,t})] $ appearing in \eqref{eq_beta_stieltjes} can be replaced by $\su_t (\lambda_{k,t})$.
  
\begin{proof}[Proof of Lemma \ref{lem_tr_S_D}]
By an application of the Sherman-Morrison formula, we obtain
      \begin{align} \label{eq_sher_mor}
        \D_t = \D_{i,t} - \frac{1}{n} \beta_{i,t} \D_{i,t} \eta_i \eta_i^\top \D_{i,t}.
    \end{align}
    Thus, we have
    \begin{align}
         \frac{1}{n} \tr \bfS_{2,s} \D_t
         = \frac{1}{n} \sum\limits_{i=1}^{\ns} \frac{1}{n} \eta_i^\top \D_t \eta_i 
         = \frac{1}{n} \sum\limits_{i=1}^{\ns} \left\{  \frac{1}{n} \eta_i^\top \D_{i,t} \eta_i 
         - \frac{1}{n} \beta_{i,t} \lb \eta_i^\top \D_{i,t} \eta_i \rb^2 \right\} \label{sum1}
    \end{align}
     Using \cite[Lemma B.26]{bai2004}, \eqref{eq_bound_D} and the fact that $\D_{i,t}$ is independent of $\eta_i$, we see that
    \begin{align*}
        \E \left| 
         \frac{1}{n^2} \sum\limits_{i=1}^{\ns} \lb \eta_i^\top \D_{i,t} \eta_i 
        - \tr \D_{i,t} 
         \rb 
        \right| 
         = o(1).
    \end{align*}
     Using \eqref{eq_beta_stieltjes} and \cite[Lemma B.26]{bai2004}, we may replace $\beta_{i,t}$ in the second summand of \eqref{sum1} by $-\lambda\su_{n,t}(\lambda)$.
     Proceeding similarly as for the first summand in \eqref{sum1}, we obtain
     \begin{align*}
         \frac{1}{n^3} \sum\limits_{i=1}^{\ns}   \lb \eta_i^\top \D_{it} \eta_i \rb^2
         = \frac{\ns}{ n}\lb \frac{1}{n} \tr \D_{t} \rb^2 
          + o_{\PR}(1). 
     \end{align*}
        Recall that 
   \begin{align} \label{conv_stieltjes}
       \frac{1}{n} \tr \D_t = y m_t + \op. 
   \end{align}
     Combining these estimates, we see that
     \begin{align*}
          \frac{1}{n} \tr \bfS_{2,s} \D_t
          = \frac{\ns}{n} \lb \frac{1}{n} \tr \D_t 
          + \lambda_{k,t} \su_t \lb\frac{1}{n} \tr \D_t \rb ^2 \rb + o_{\PR}(1)
          = s y m_t \lb 1 + \lambda_{k,t} y \su_t  m_t \rb + o_{\PR} (1). 
     \end{align*}
     Using \eqref{eq_m_mu} twice as well as \eqref{eq_stieltjes_MP}, we get
        \begin{align*}
            s m_t ( 1 + y \su_t m_t) 
            & =  s m_t \left\{  1 + y  m_t \lambda_{k,t} \lb - \frac{1 - y_{(t)} }{\lambda_{k,t}} + y_{(t)} m_t \rb  \right\} \\ 
            & = \frac{s}{t} m_t \left\{  t - \lambda_{k,t} y m_t - y m_t \lb t - y - \lambda_{k,t} - \lambda_{k,t} y m_t \rb \right\} \\ 
            & = \frac{s}{t} m_t \lb  t - \lambda_{k,t} y m_t - y  \rb 
             = \frac{s}{t} \lb 1 + \lambda_{k,t} m_t \rb,
        \end{align*}
     which concludes the proof of Lemma \ref{lem_tr_S_D}. 
\end{proof}

\begin{proof}[Proof of Lemma \ref{lem_tr_S_D_unif}]
 The equality $  1 + \lambda_{k,t} m_t = -  m_{1,t} $ follows by a direct calculation. Considering the proof of Lemma \ref{lem_tr_S_D} closely and recalling that the estimates \eqref{eq_bound_D} and \eqref{eq_bound_beta} do not depend on $t$, we see that  
    it remains to strengthen the convergence in \eqref{conv_stieltjes}. By using a martingale decomposition and \eqref{eq_bound_D}, \eqref{eq_bound_beta} and \cite[Lemma B.26]{bai2004}, we see that
    \begin{align}
       \sup_{t\in [t_0,1]} \E \lb \frac{1}{\sqrt{n}} \tr \lb \D_t - \E \D_t \rb  \rb^2 = o(1).
    \end{align}
    Using \cite[Theorem 4.5]{dornemann2021linear}, this implies that 
    \begin{align*}
         \frac{1}{n} \tr \D_t = y m_t + o_{\PR}\lb n^{-1/2} \rb,
    \end{align*}
    where the $o_{\PR}\lb n^{-1/2} \rb$-term is independent of $t$,
    and the proof of Lemma \ref{lem_tr_S_D_unif} concludes. 
\end{proof}

\begin{proof}[Proof of Lemma \ref{lem_tr_S_D_S_D}]
      Using \eqref{eq_sher_mor}, we decompose
      \begin{align*}
          \frac{1}{n}   \tr \bfS_{2,s} \D_s \bfS_{2,s}\D_t 
           = \frac{1}{n^3}\sum\limits_{i,j=1}^{\ns} 
           \eta_i^\top \D_s \eta_j \eta_j^\top \D_t \eta_i 
           = T_1 + T_2,
      \end{align*}
      where
      \begin{align*}
          T_1 & = T_1 (n,s,t) 
          = \frac{1}{n^3}\sum\limits_{i=1}^{\ns} 
           \eta_i^\top \D_s \eta_i \eta_i^\top \D_t \eta_i, \\ 
           T_2 & = T_2 (n,s,t)
           = \frac{1}{n^3}\sum_{\substack{i,j=1, \\ i\neq j }}^{\ns} 
           \eta_i^\top \D_s \eta_j \eta_j^\top \D_t \eta_i .
      \end{align*}
      We will analyze the terms $T_1$ and $T_2$ using the techniques presented in the proof of Lemma \ref{lem_tr_S_D}. 
      \paragraph*{Analysis of $T_1$}
        For $T_1$, we see that 
        \begin{align*}
            T_1 & = 
            \frac{1}{n} \sum\limits_{i=1}^{\ns}
            \Big( \frac{1}{n^2} \eta_i^\top \D_{i,s} \eta_i \eta_i^\top \D_{i,t} \eta_i
            - \frac{1}{n^3} \beta_{i,s} \eta_i^\top \D_{i,s} \eta_i \eta_i^\top \D_{i,s} \eta_i \eta_i^\top \D_{i,t} \eta_i
            \\ & \quad - \frac{1}{n^3} \beta_{i,t}  \eta_i^\top \D_{i,s} \eta_i \eta_i^\top \D_{i,t} \eta_i \eta_i^\top \D_{i,t} \eta_i
             + \frac{1}{n^4} \beta_{i,s} \beta_{i,t}  \eta_i^\top \D_{i,s}  \eta_i \eta_i^\top \D_{i,s} \eta_i \eta_i^\top \D_{i,t} \eta_i \eta_i^\top \D_{i,t} \eta_i
            \Big) \\ 
            & = \frac{\ns}{n}
            \Big\{ 
            n^{-2} \tr \D_{s} \tr \D_t 
            + n^{-3} \lambda_{k,s} \su_s \lb \tr \D_s \rb^2 \tr \D_t 
            + n^{-3} \lambda_{k,t} \su_t \tr \D_s \lb \tr \D_t \rb^2 
            \\ & \quad + n^{-4} \lambda_{k,s} \lambda_{k,t} \su_s \su_t \lb \tr \D_s \rb^2 \lb \tr \D_t \rb^2 
            \Big\} 
            + \op \\ 
            & = s y^2 m_s m_t 
            \lb 1 
            + y \lambda_{k,s} \su_s m_s 
            + y \lambda_{k,t} \su_t  m_t 
            + y^2 \lambda_{k,s} \lambda_{k,t} \su_s \su_t m_s m_t
            \rb + \op .
        \end{align*}
      \paragraph*{Analysis of $T_2$}
      We write
      \begin{align*}
          T_2 & = 
          \frac{1}{n}\sum_{\substack{i,j=1, \\ i\neq j }}^{\ns} \Big( 
           \frac{1}{n^2} \eta_i^\top \D_{i,s} \eta_j \eta_j^\top \D_{i,t} \eta_i
           - \frac{1}{n^3} \beta_{i,s}  \eta_i^\top \D_{i,s} \eta_i \eta_i^\top \D_{i,s} \eta_j \eta_j^\top \D_{i,t} \eta_i \\ & \quad
           - \frac{1}{n^3} \beta_{i,t}  \eta_i^\top \D_{i,s} \eta_j \eta_j^\top \D_{i,t} \eta_i \eta_i^\top \D_{i,t} \eta_i 
           + \frac{1}{n^4} \beta_{i,s} \beta_{i,t}  \eta_i^\top \D_{i,s} \eta_i \eta_i^\top \D_{i,s} \eta_j \eta_j^\top \D_{i,t} \eta_i \eta_i^\top \D_{i,t} \eta_i 
           \Big) \\ 
           & = s T_3  \lb 1 + \lambda_{k,s} y \su_s m_s
           + \lambda_{k,t} y \su_t m_t
           + \lambda_{k,s} \lambda_{k,t} y^2 \su_s \su_t m_s m_t \rb 
           + \op ,
      \end{align*}
      where
      \begin{align*}
          T_3 = T_3(n,s,t)
          = \frac{1}{n^2} \sum_{j=1}^{\ns} \eta_j^\top \D_s \D_t \eta_j .
      \end{align*}
      Thus, it remains to investigate the term $T_3$, which will be done in the following step.
      \paragraph*{Analysis of $T_3$}
      Again, we decompose
      \begin{align*}
          T_3 & = 
          \frac{1}{n} \sum_{j=1}^{\ns} \Big( \frac{1}{n}
          \eta_j^\top \D_{j,s} \D_{j,t} \eta_j
          - \frac{1}{n^2} \beta_{j,s} \eta_j^\top \D_{j,s} \eta_j \eta_j^\top \D_{j,s} \D_{j,t} \eta_j
          - \frac{1}{n^2} \beta_{j,t} \eta_j^\top \D_{j,s} \D_{j,t} \eta_j \eta_j^\top \D_{j,t} \eta_j
          \\ & + \frac{1}{n^3} \beta_{j,s} \beta_{j,t} \eta_j^\top \D_{j,s} \eta_j \eta_j^\top \D_{j,s} \D_{j,t} \eta_j \eta_j^\top \D_{j,t} \eta_j
          \Big) \\ 
          & = \frac{\ns}{n}
          \Big( n\inv \tr \D_s \D_t 
          + \lambda_{k,s} \su_s n^{-2} \tr \D_s \tr \D_s \D_t
          + \lambda_{k,t} \su_t n^{-2} \tr \D_t \tr \D_s \D_t \\ & \quad 
          + \lambda_{k,s} \lambda_{k,t} \su_s \su_t n^{-3} \tr \D_s \tr \D_t \tr \D_s \D_t
          \Big) 
          + \op \\
          = & s n\inv \tr \D_s \D_t 
          \Big( 1 + \lambda_{k,s} y \su_s m_s 
          + \lambda_{k,t} y \su_t m_t 
          + \lambda_{k,s} \lambda_{k,t} y^2 \su_s \su_t m_s m_t 
          \Big) + \op .
      \end{align*}
      The term $n\inv \tr \D_s \D_t $ is analyzed in Lemma \ref{lem_conv_tr_DsDt}. 
      \paragraph*{Conclusion}
      Putting all three steps together, we obtain
      \begin{align*}
        &   \frac{1}{n} \tr \bfS_{2,s} \D_s \bfS_{2,s} \D_t  
         = T_ 1 + T_2 
        =  s y^2 m_s m_t 
            \lb 1 
            + y \lambda_{k,s} \su_s m_s 
            + y \lambda_{k,t} \su_t  m_t 
            + y^2 \lambda_{k,s} \lambda_{k,t} \su_s \su_t m_s m_t
            \rb \\ & \quad 
            + s T_3  \lb 1 + \lambda_{k,s} y \su_s m_s
           + \lambda_{k,t} y \su_t m_t
           + \lambda_{k,s} \lambda_{k,t} y^2 \su_s \su_t m_s m_t \rb \\
           & =   s y^2 m_s m_t 
            \lb 1 
            + y \lambda_{k,s} \su_s m_s 
            + y \lambda_{k,t} \su_t  m_t 
            + y^2 \lambda_{k,s} \lambda_{k,t} \su_s \su_t m_s m_t \rb  \\ & \quad 
            +  s^2 n\inv \tr \D_s \D_t 
          \Big( 1 + \lambda_{k,s} y \su_s m_s 
          + \lambda_{k,t} y \su_t m_t 
          + \lambda_{k,s} \lambda_{k,t} y^2 \su_s \su_t m_s m_t 
          \Big)^2
            + \op \\ 
            & = 
            s \left\{ y^2 m_s m_t + 
            s n\inv \tr \D_s \D_t 
            \lb 1 
            + y \lambda_{k,s} \su_s m_s 
            + y \lambda_{k,t} \su_t  m_t 
            + y^2 \lambda_{k,s} \lambda_{k,t} \su_s \su_t m_s m_t
            \rb
            \right\} \\ & \quad \times  
             \lb 1 
            + y \lambda_{k,s} \su_s m_s 
            + y \lambda_{k,t} \su_t  m_t 
            + y^2 \lambda_{k,s} \lambda_{k,t} \su_s \su_t m_s m_t
            \rb + \op 
            \\ 
            & = 
            s \left\{ y^2 m_s m_t + 
             \frac{s y m_s m_t}{1 - s y \lambda_{k,t} \lambda_{k,s} \su_s \su_t m_s m_t }
            \lb 1 
            + y \lambda_{k,s} \su_s m_s 
            + y \lambda_{k,t} \su_t  m_t 
            + y^2 \lambda_{k,s} \lambda_{k,t} \su_s \su_t m_s m_t
            \rb
            \right\} \\ & \quad \times  
             \lb 1 
            + y \lambda_{k,s} \su_s m_s 
            + y \lambda_{k,t} \su_t  m_t 
            + y^2 \lambda_{k,s} \lambda_{k,t} \su_s \su_t m_s m_t
            \rb + \op .
      \end{align*}
\end{proof}

\begin{proof}[Proof of Lemma \ref{lem_conv_tr_DsDt}]
   We will make use of a decomposition of $\D_t$ derived in \cite{dornemann2021linear, diss}. However, we emphasize that we cannot directly employ the results of this work, where the quantity $\tr \lb   \E_j[ \D_{j,s} ] \E_j[ \D_{j,t} ] \rb $ has been studied for $1\leq j \leq \ns $. (Here, $\E_j$ denotes the expectation conditionally on $\eta_1, \ldots, \eta_j$.) In contrast, when omitting the conditional expectation, the term $\tr  \D_s \D_t$ admits a subtle different structure and needs to be studied carefully. 
    Similarly to \cite[(6.2.5)]{diss}, we have the following decomposition for $\D_s$,
    \begin{align} 
		\D_{s} = & - 
    \frac{1}{\lambda_{k,s} - \frac{\ns}{n} b_s} \bfI
  +  \frac{1}{\lambda_{k,s} - \frac{\ns}{n} b_s}
  \sum\limits_{ 1 \leq i \leq \ns} \beta_{i,s}  
  \eta_{i} \eta_{i}^\top \D_{i,s}
  - \frac{\ns}{n}  \frac{b_s}{\lambda_{k,s} - \frac{\ns}{n} b_s}
  \D_{s} \nonumber \\
		= & -  \frac{1}{\lambda_{k,s} - \frac{\ns}{n} b_s} \bfI 
		+ b_{s} \mathbf{A}_s + \mathbf{B}_s + \mathbf{C}_s,\label{decomp_D}
	\end{align}
	where
	\begin{align*}
	 	\mathbf{A}_s &= \mathbf{A}_s(\lambda_{k,s}) 
   =   \frac{1}{\lambda_{k,s} - \frac{\ns}{n} b_s} 
   \sum\limits_{\substack{i=1}}^{\ns}  
   \lb n\inv \eta_{i} \eta_{i}^\top - n\inv \T \rb \D_{i,s} , 
   \\
	 	\mathbf{B}_s &= \mathbf{B}_s(\lambda_{k,s}) 
   =  \frac{1}{\lambda_{k,s} - \frac{\ns}{n} b_s}
   \sum\limits_{\substack{i=1  }}^{\ns} \lb \beta_{i,s}  - b_{s} \rb 
	 	n\inv \eta_{i} \eta_{i}^\top \D_{i,s} , 
   \\
	 	\mathbf{C}_s &= \mathbf{C}_s(\lambda_{k,s}) 
   =   \frac{b_s}{\lambda_{k,s} - \frac{\ns}{n} b_s} 
   n \inv \sum\limits_{\substack{i=1  }}^{\ns} 
	 	\lb \D_{i,s}  - \D_{s}  \rb. 
	\end{align*}
 Our goal is to apply this decomposition to $( 1/n) \tr \D_s  \D_t $ and to identify the contributing terms. For this purpose, let
 \begin{align*}
     b_t(\lambda_{k,t}) & = b_t = \frac{1}{1 + n\inv \E [ \tr \D_t ]}.
 \end{align*} 
 Similarly to \cite{diss}, we see that terms involving $\bfB_s$ and $\bfC_s$ are asymptotically negligible, among others. 
 More precisely, we have
 \begin{align} 
    &  n\inv  \tr \lb \D_s  \D_t  \rb \nonumber \\
    & = n\inv b_s \tr \lb  \bfA_s  \D_t  \rb 
     + \frac{p}{n} \frac{1}{\lb \lambda_{k,s} - \frac{\ns}{n} b_s \rb \lb \lambda_{k,t} - \frac{\nt}{n} b_t \rb }
     + o_{\PR}(1), \label{a1}
 \end{align}
 and
 \begin{align} \label{a2}
      n\inv  \tr \lb \bfA_s  \D_t  \rb 
      = A_{1,s,t}  + o_{\PR}(1),
 \end{align}
 where
 \begin{align*}
     A_{1,s,t} & = - \frac{1}{n^3 \lb \lambda_{k,s} - \frac{\ns}{n} b_s \rb }
      \sum\limits_{1 \leq i \leq \ns } \beta_{i,t} \eta_{i}^\top \D_{i,s}   \D_{i,t}
		\eta_{i} \eta_{i}^\top \D_{i,t} 
   \eta_{i}.
 \end{align*}
    Using similar techniques as in the proof of Lemma \ref{lem_tr_S_D}, we get
    \begin{align} 
         A_{1,s,t} 
       &  = \frac{\ns}{n^3}  \frac{\lambda_{k,t}}{\lambda_{k,s} - \frac{\ns}{n} b_s}  \E [ \su_{n,t}] \tr \lb \D_{s}  \D_{t}  \rb 
        \tr  \D_{t} 
        + o_{\PR}(1) \nonumber \\ 
        &  = \frac{\ns p}{n^3} \lambda_{k,t} \E [ \su_{n,t}] \tr \lb \D_{s}  \D_{t}  \rb 
         \frac{1}{\lb \lambda_{k,s} - \frac{\ns}{n} b_s \rb \lb \lambda_{k,t} - \frac{\nt}{n} b_t \rb  }
        + o_{\PR}(1), \label{a3}
    \end{align}
    where we used the decomposition of $\D_{t}$ in \eqref{decomp_D} for the second equality sign. 
    Combining \eqref{a1}, \eqref{a2} and \eqref{a3} gives 
    \begin{align*}
        & \frac{1}{n} \tr \lb \D_{s}  \D_{t}  \rb 
        \lb 1 - \frac{\ns p }{n^2} \lambda_{k,s} \lambda_{k,t} \E [ \su_{n,s}] \E [ \su_{n,t}]
       \frac{1}{\lb \lambda_{k,s} - \frac{\ns}{n} b_s \rb \lb \lambda_{k,t} - \frac{\nt}{n} b_t \rb  }
       \rb  \\ 
        & =  \frac{p}{n}\frac{1}{\lb \lambda_{k,s} - \frac{\ns}{n} b_s \rb \lb \lambda_{k,t} - \frac{\nt}{n} b_t \rb  } 
        + o_{\PR} (1),
    \end{align*}
    and, consequently, 
            \begin{align*}
        & \frac{1}{n} \tr \lb \D_{s}  \D_{t}  \rb 
        \lb 1 - \frac{\ns p }{n^2}  \E [ \su_{n,s}] \E [ \su_{n,t}]
       \frac{1}{\lb 1 + \frac{\ns}{n} \E [ \su_{n,s}] \rb \lb 1 + \frac{\nt}{n} \E [ \su_{n,t}] \rb  }
       \rb  \\ 
        & =  \frac{p}{n} \frac{1}{\lambda_{k,s} \lambda_{k,t}} \frac{1}{\lb 1 + \frac{\ns}{n} \E [ \su_{n,s}] \rb \lb 1 + \frac{\nt}{n} \E [ \su_{n,t}] \rb  } 
        + o_{\PR} (1).
    \end{align*}
    We define 
    \begin{align*}
        a_n( s,t) 
  & = \frac{\ns p }{n^2}  \E [ \su_{n,s}] \E [ \su_{n,t}]
       \frac{1}{\lb 1 + \frac{\ns}{n} \E [ \su_{n,s}] \rb \lb 1 + \frac{\nt}{n} \E [ \su_{n,t}] \rb  }.
    \end{align*}
    Similarly to \cite[Lemma 7.1.7]{diss}, we have $|a_n( s, t)|<1$. 
        Using $\E [\su_{n,s}] \to \su_s$ for $n\to\infty$ and \eqref{eq_m_mu},
  it holds that
    \begin{align} \label{eq_def_a}
        a_n ( s, t) \to 
        s y \frac{\su_s \su_t}{\lb 1 + s \su_s \rb \lb 1+ t \su_t\rb }
        = s y \lambda_{k,s} \lambda_{k,t} \su_s \su_t m_s m_t,
    \end{align}
 which implies \eqref{eq_conv_tr_DsDt}. 

\end{proof}

\end{appendix}
 
\textbf{Acknowledgements.} 
The work of Nina Dörnemann  was partially supported by the  
 DFG Research unit 5381 {\it Mathematical Statistics in the Information Age}, project number 460867398. 

   \setlength{\bibsep}{1pt}
\begin{small}
\bibliography{references}
\end{small}

\end{document}